\newcommand{\rk}{\operatorname{rk}}
\newcommand{\cork}{\operatorname{cork}}
\newcommand{\im}{\operatorname{im}}
\newcommand{\KK}{\mathbb{K}}
\renewcommand{\c}{\mathrm{c}}
\title
[Universal Tutte characters via combinatorial coalgebras]
{Universal Tutte characters\\via combinatorial coalgebras}
\author[\initial{C.} Dupont]{\firstname{Cl\'{e}ment} \lastname{Dupont}}
\address{IMAG\\Universit\'{e} de Montpellier\\CNRS\\Montpellier\\France}
\email{clement.dupont@umontpellier.fr}
\author[\initial{A.} Fink]{\firstname{Alex} \lastname{Fink}}
\address{School of Mathematical Sciences\\Queen Mary University of London\\UK}
\email{a.fink@qmul.ac.uk}
\author[\initial{L.} Moci]{\firstname{Luca} \lastname{Moci}}
\address{IMJ-PRG\\Universit\'{e} Paris-Diderot Paris 7\\Paris\\France}
\email{lucamoci@hotmail.com}
\keywords{coalgebra, bialgebra, Tutte polynomial, dichromatic polynomial, Las Vergnas polynomial, Bollob\'as--Riordan polynomial, arithmetic Tutte polynomial, minors system, convolution formula}
\subjclass{16T10, 16T15, 05B35, 05C31}
\begin{document}
\begin{abstract}
This work discusses the extraction of meaningful invariants of combinatorial objects from coalgebra or bialgebra structures.
The Tutte polynomial is an invariant of graphs well known for the formula which computes it recursively by deleting and contracting edges, and for its universality with respect to similar recurrence. 
We generalize this to all classes of combinatorial objects with deletion and contraction operations, 
associating to each such class a universal Tutte character by a functorial procedure.
We show that these invariants satisfy a universal property and convolution formulae similar to the Tutte polynomial. 
With this machinery we recover classical invariants for delta-matroids, matroid perspectives, relative and colored matroids, generalized permutohedra, and arithmetic matroids. 
We also produce some new invariants along with new convolution formulae.
\end{abstract}

\maketitle

\section{Introduction}
The Tutte polynomial is surely the single most appreciated invariant of matroids and graphs. 
For one, its specializations include any function which can be computed recursively
from its evaluations for a deletion and a contraction, as a weighted sum.
Many invariants of independent interest in matroid and graph theory do satisfy such a recurrence,
such as the chromatic and flow polynomials; examples occur also in knot theory and in statistical physics.
Moreover, the Tutte polynomial satisfies interesting identities like the
convolution formula of Kook--Reiner--Stanton \cite{KRS},
which also follows from work of Etienne--Las Vergnas \cite{etiennelasvergnas}.

The present work is concerned with the 
many other combinatorial objects which possess invariants with properties reminiscent of these,
such as matroid perspectives and their Las Vergnas polynomial \cite{lasvergnasextensions} or delta-matroids and their Bollob\'{a}s--Riordan polynomial \cite{BR}, which  are both matroidal frameworks for certain topological embeddings of graphs in surfaces.
Another example, which served as our initial motivation, is arithmetic matroids, introduced by D'Adderio and the third author \cite{Md1}, whose arithmetic Tutte polynomial satisfies a convolution formula (proved by Backman--Lenz \cite{backmanlenz} in a special case, and then in the present paper in greater generality).
To formulate a ``Tutte-like'' deletion-contraction recurrence for a class of combinatorial objects,
we require that each object have an underlying set, 
and that there are two ways to
create new objects
by removing elements of this set,
\emph{deletion} and \emph{contraction}, subject to some axioms.
Classes with this structure are called \emph{minors systems}. With every minor system is naturally associated a coalgebra; moreover if on the minors system is defined some sort of \emph{direct sum}, the coalgebra is endowed with a product that makes it into a \emph{bialgebra}. Bialgebras have proven to be a powerful language in combinatorics: see especially \cite{JoniRota}.

Our main contribution is to define a \emph{universal Tutte character}
for any minors system (Definition~\ref{def:universal Tutte}).
This definition unifies a great number of known Tutte-like invariants
by specializing the minors system in our universal Tutte character;
the second half of our paper is dedicated to presenting these.
Our invariant is universal in the same sense as the Tutte polynomial of matroids and graphs is:
namely, any function satisfying a deletion-contraction recurrence, suitably understood,
is an evaluation thereof (Proposition~\ref{prop:universal}). The universal Tutte character also satisfies a universal convolution formula that specializes to every convolution formula in a minors system we are aware of, including the formula of Kook--Reiner--Stanton, and gives rise to several new examples. Moreover, our construction is functorial with respect to the minors system. 

Duchamp, Hoang-Nghia, Krajewski and Tanasa \cite{duchampetal}  have recently provided similar machinery recovering the Tutte polynomial of a matroid.
This was generalized by
Krajewski, Moffatt and Tanasa \cite{KMT} who associated a polynomial Tutte invariant with every graded connected Hopf algebra. 
Our approach builds on the latter, making several improvements in generality and canonicity.

For one, we lift a restriction on minors systems in \cite{KMT} 
by allowing multiple non-isomorphic structures with empty underlying set.
As one example of how this is useful, our universal Tutte invariant for graphs
distinguishes the many different graphs with empty edge set: 
it is thus strictly more general than the Tutte polynomial.
We obtain as a specialization Tutte's \emph{dichromatic polynomial} \cite{Tuttedichromatic},
which is not an evaluation of the Tutte polynomial because it requires counting connected components.

As a consequence of that choice, with every minors system is naturally associated a bialgebra, which is not necessarily Hopf (unlike in \cite{KMT}). Classes of structures with multiple empty members cannot be naturally handled as connected (hence Hopf) bialgebras, 
as the extra degree-zero elements can frustrate the definition of an antipode (see Remark \ref{noHopf}). A Hopf algebra may be obtained as a quotient, but this comes at the price of losing a consistent amount of information, hence reducing the number of invariants that can be recovered in this way.

Lacking a Hopf algebra structure is no obstacle to our programme, which needs only a bialgebra structure.
In fact the amount of information we use about multiplication is small.
For minors systems with a unique empty member the multiplication is irrelevant, and we can work with a coalgebra.

Rather than the language of Hopf algebras,
we prefer to use the language of linear species and comonoids therein,
which is more convenient and canonical, 
not requiring us to fix a single chain of ground sets and relabel after every set operation.
This language was already present in Schmitt's article \cite{schmittspecies}
(preceding the more often cited \cite{schmitt}).

Our construction employs 
a universal bialgebra norm taking values in the monoid ring of
what we call the \emph{Grothendieck monoid} of a minors system.
This ring is not necessarily a polynomial ring, and when it is not, the relations among its generators
automatically encapsulate the relations which \cite{KMT} had to attach to their polynomial rings with the somewhat fiddly machinery of ``uniform selectors''.
In Theorem \ref{thm:pres X(S) second} we give a quadratic presentation for the Grothendieck monoid, 
making the targets of our invariants easy to compute in practice. Then we use the machinery that we built to obtain several convolution formulae for different classes of combinatorial objects. Some of these formulae are well known, while others are (to the best of our knowledge) new: see for instance Propositions \ref{prop:convolution flats}, \ref{prop:multiconvolution matroids} and \ref{prop:convolution cyclotomic} for the classical Tutte polynomial, Proposition \ref{prop:convolution LV} for the Las Vergnas polynomial, Proposition \ref{BRKung} for the Bollob\'{a}s--Riordan polynomial and Theorem \ref{thm:arith conv} for the arithmetic Tutte polynomial.

We have chosen to exemplify our results with mostly matroid-like combinatorial structures, which are arguably simpler to deal with than topological examples. Our formalism, and in particular the mechanism of twist maps, should help uncover new invariants in the latter class and produce interesting convolution formulae. This will be the subject of a subsequent article.

\subsection*{Layout}
The structure of this paper is as follows.  
In Section~\ref{sec:minors systems} we give the definitions of minors systems and comonoids in set species.
Section~\ref{sec:main} contains the main results, including the definitions and statements of our universal invariants and formulae.
Section~\ref{sec:more} presents a number of further results which are less essential for our main developments.

The remainder of the paper, Sections \ref{sec:matroids} through \ref{sec:arith}, 
comprises a sequence of applications of our theory to numerous individual minors systems.
We work out Grothendieck groups, universal Tutte characters, and in many cases universal convolution formulae,
and point out how these specialize to invariants and formulae present in the literature.
Section \ref{sec:matroids} covers the minors system of matroids, 
of which all the subsequent sections are in one way or another generalizations, together with the minors system of graphs.
Of the following sections, Section \ref{sec:dmp} is on delta-matroids, matroid perspectives, and their ilk
is called on in Section \ref{sec:relative} on relative matroids,
but there are no (or at most incidental) dependences between these and
Section \ref{sec:SF} on polymatroids and generalised permutohedra, 
Section \ref{sec:colored} on colored matroids, 
or Section \ref{sec:arith} on arithmetic matroids,
nor among the latter three, so the reader should have no trouble taking these in any order.
The last four sections are also new by comparison with \cite{KMT}.

\subsection*{Notation}
We fix a commutative ring with unit $\KK$, which will serve as a coefficient ring throughout the article. For most applications the case $\KK=\mathbb{Z}$ is enough. 

\tableofcontents

\section{Minors systems}\label{sec:minors systems}

	\subsection{Set species and minors systems}
		A set species \cite{joyalespeces} is a structure which one should think of as associating to each finite set $E$ the set of structures of some combinatorial type on~$E$, for example the set of matroids with ground set $E$.
		
		\begin{defi}
		A \emph{set species} is a functor from the category of finite sets and bijections to the category of sets. 
		\end{defi}
		
		More concretely, a set species $\mathsf{S}$ associates to every finite set $E$ a set $\mathsf{S}[E]$ and to every bijection $\sigma:E\stackrel{\sim}{\rightarrow} E'$ a map $\mathsf{S}[\sigma]:\mathsf{S}[E]\rightarrow \mathsf{S}[E']$, such that $\mathsf{S}[\sigma\circ \tau]=\mathsf{S}[\sigma]\circ \mathsf{S}[\tau]$ and $\mathsf{S}[\mathrm{id}]=\mathrm{id}$. The set $\mathsf{S}[n]\doteq \mathsf{S}[\{1,\ldots,n\}]$
		then has an action of the symmetric group $\Sigma_n$ on $n$ letters, and the set species $\mathsf{S}$ can be recovered from the \emph{symmetric sequence} $\{\mathsf{S}[n]\, , \, n\geq 0\}$. For $\mathsf{S}$ a set species and $n\geq 0$ an integer, we set
		$$\mathsf{S}_n\doteq \operatorname{colim}_{|E|=n} \mathsf{S}[E] \simeq \mathsf{S}[n] / \Sigma_n\ .$$
		We denote by the same symbol an element $X\in\mathsf{S}[E]$ and its \enquote{isomorphism class} $X\in\mathsf{S}_n$. We let $\mathsf{S}_\bullet=\bigsqcup_{n\geq 0}\mathsf{S}_n$.
		
		\begin{ex}
		A trivial example is the set species $\mathsf{Set}$ for which $\mathsf{Set}[E]=\{E\}$ is a singleton for every finite set $E$. Our prototypical examples are:
		\begin{enumerate}[--]
		    \item the set species $\mathsf{Mat}$, for which $\mathsf{Mat}[E]$ is the set of matroids with ground set $E$, and $\mathsf{Mat}_n$ is the set of isomorphism classes of matroids on an $n$-element set;
		    \item the set species $\mathsf{Gra}$, for which $\mathsf{Gra}[E]$ is the set of graphs with set of edges $E$, and $\mathsf{Gra}_n$ is the set of isomorphism classes of graphs with $n$ edges. Here graphs are allowed to have loops and multiples edges.
		\end{enumerate}
		For more on these example, see Section \ref{sec:matroids}.
		\end{ex}
		
		We are interested in set species which carry \emph{minor} operations like those of our prototypical examples,
		to wit, restriction (or equivalently deletion) and contraction.
		Many important and naturally occurring species of this form, again including $\mathsf{Mat}$,
		also have a direct sum operation. Concretely, for the set species $\mathsf{Gra}$, the deletion is obtained by removing a given edge from a graph, while the contraction by contracting that edge (that is, removing it and identifying the vertices incident to it); finally, the direct sum of two graphs is obtained by taking their disjoint union.

		For our theory we need only direct sums for which one object belongs to~$\mathsf{S}[\varnothing]$,
		which justifies the following definition of \emph{minors system}.
		Often the existence of such a limited direct sum is automatic: see Remark~\ref{rem:sum when connected}.
		Minors systems which have a direct sum of general pairs of objects we will call \emph{multiplicative minors systems} (Definition~\ref{def:mms}).

		\begin{defi}\label{def:ms}
		A \emph{minors system} is a set species $\mathsf{S}$ with the following extra structure.
		\begin{enumerate}
		\item For every decomposition $E=A\sqcup B$, a \emph{coproduct map} $\Delta_{A,B}:\mathsf{S}[E]\rightarrow \mathsf{S}[A]\times \mathsf{S}[B]$.
		\item For every finite set $E$, a \emph{product map} $\rho_E: \mathsf{S}[E]\times \mathsf{S}[\varnothing]\rightarrow\mathsf{S}[E]$.
		\end{enumerate}
		We introduce the notations $\Delta_{A,B}(X)=(X|A,X/A)$ and we call the operations $X\mapsto X|A$ and $X\mapsto X/A$ \emph{restriction} and \emph{contraction} respectively. We also set $\rho_E(X,Y)=X\oplus Y$, that we call the \emph{direct sum} operation. These operations must satisfy the following compatibilities.
		\begin{enumerate}
		\item[(M1)] The coproduct maps are functorial: for every decomposition $E=A\sqcup B$, $E'=A'\sqcup B'$, for every bijection $\sigma:E\stackrel{\sim}{\rightarrow} E'$ such that $\sigma(A)=A'$ and $\sigma(B)=B'$, for every $X\in \mathsf{S}[E]$, we have 
		$$(\mathsf{S}[\sigma](X))|A' = \mathsf{S}[\sigma_{|A}](X|A) \;\; \mbox{ and } \;\; (\mathsf{S}[\sigma](X))/A' = \mathsf{S}[\sigma_{|B}](X/A) $$
		\item[(M2)] The coproduct maps are coassociative: for every decomposition $E=A\sqcup B\sqcup C$ and every $X\in\mathsf{S}[E]$ we have
		$$(\mathrm{id}\times\Delta_{B,C}) (\Delta_{A,B\sqcup C} (X)) = (\Delta_{A,B}\times \mathrm{id}) (\Delta_{A\sqcup B,C}(X))\ .$$
		In other words:
		$$(X| A\sqcup B) | A = X|A \;\;,\;\; (X/A)|B = (X|A\sqcup B)/A\;\;,\;\; (X/A)/B = X/(A\sqcup B)\ .$$
		\item[(M3)] The coproduct maps are counital: for every finite set $E$ and every $X\in\mathsf{S}[E]$ we have 
		$$\mathrm{pr}_1(\Delta_{E,\varnothing}(X))=X=\mathrm{pr}_2(\Delta_{\varnothing,E}(X))\ .$$
		In other words:
		$$X|E=X \;\;\textnormal{ and }\;\; X/\varnothing =X\ .$$
		\item[(M4)] The product maps $\rho_E$ are functorial: for every bijection $\sigma:E\stackrel{\sim}{\rightarrow} E'$ and every $X\in \mathsf{S}[E]$, $Y\in\mathsf{S}[\varnothing]$ we have
		$$\mathsf{S}[\sigma](X\oplus Y) = \mathsf{S}[\sigma](X) \oplus Y \ .$$
		\item[(M5)] The product maps $\rho_E$ are associative: for every finite set $E$ and every $X\in\mathsf{S}[E]$, $Y,Z\in\mathsf{S}[\varnothing]$, we have
		$$X\oplus (Y\oplus Z) = (X\oplus Y)\oplus Z\ .$$
		\item[(M6)] The product maps $\rho_{\varnothing}$ are commutative: for every $X,Y\in\mathsf{S}[\varnothing]$ we have
		$$X\oplus Y= Y\oplus X\ .$$
		\item[(M7)] The direct sum operation has a neutral element: there exists a (necessarily unique) element $1_{\mathsf{S}}\in\mathsf{S}[\varnothing]$ such that for every finite set $E$ and every $X\in\mathsf{S}[E]$ we have 
		$$X\oplus 1_{\mathsf{S}}= X\ .$$
		\item[(M8)] The direct sum operation is compatible with restriction and contraction: for every finite set $E$, every subset $A\subseteq E$, and every $X\in\mathsf{S}[E]$, $Y\in\mathsf{S}[\varnothing]$, we have
		$$(X\oplus Y)|A= (X|A) \oplus Y \;\;\mbox{ and }\;\; (X\oplus Y)/A = (X/A)\oplus Y\ .$$
		\end{enumerate}
		\end{defi}

		\begin{rem}
		A set species with coproduct maps that satisfy axioms (M1), (M2), (M3) is sometimes called a \emph{comonoid in set species}.
		\end{rem}

		\begin{rem}
		Axioms (M4), (M5), (M6) and (M7) mean that $\mathsf{S}[\varnothing]$ is a commutative monoid which acts functorially on the sets $\mathsf{S}[E]$.
		\end{rem}
		
		\begin{defi}
		A minors system $\mathsf{S}$ is said to be \emph{connected} if $\mathsf{S}[\varnothing]$ consists only of the object $1_{\mathsf{S}}$.
		\end{defi}
		
		The word ``connected'' here has the force it has in ``connected Hopf algebra'', not in say ``connected graph'' or ``connected matroid''. 
		
		\begin{rem}\label{rem:sum when connected}
		In a connected minors system there is no choice of direct sum operation:
		it is forced by the axiom $X\oplus 1_{\mathsf{S}}=X$ for every $X\in\mathsf{S}[E]$, 
		for which axioms (M4), (M5), (M6), (M7), (M8) are automatically satisfied. 
		In other words, a connected minors system is a set species $\mathsf{S}$ with operations of restriction and contraction 
		that satisfy axioms (M1), (M2), (M3) and such that there is a unique element in $\mathsf{S}[\varnothing]$.
		\end{rem}

		It will be convenient to introduce the \emph{deletion} in a minors system, defined for  $X\in\mathsf{S}[E]$ and a subset $A\subseteq E$ by $X\backslash A\doteq X|A^\c$ with $A^\c\doteq (E(X)\setminus A)$. This allows to rewrite the coassociativity axiom (M2) in a more symmetric way: for disjoint subsets $A, B\subseteq E$ and for $X\in\mathsf{S}[E]$ we have
		$$(X\backslash A)\backslash B=X\backslash (A\sqcup B) \;\;,\;\; (X\backslash A)/B=(X/B)\backslash A \;\;,\;\; (X/A)/B=X/(A\sqcup B)\ .$$
		
		\begin{defi}\label{def:mms}
		A \emph{multiplicative minors system} is a minors system together with the data, for every decomposition $E=A\sqcup B$, of a map
		$$\mu_{A,B}:\mathsf{S}[A]\times\mathsf{S}[B]\rightarrow \mathsf{S}[E]\ ,$$
		such that $\mu_{E,\varnothing}=\rho_E$. We denote $X\oplus Y=\mu_{A,B}(X,Y)$. The following compatibilities, which imply (M4), (M5), (M6), (M7), (M8), must be satisfied.
		\begin{enumerate}
		\item[(M4')] The product maps $\mu_{A,B}$ are functorial: for every bijection $\sigma:E\stackrel{\sim}{\rightarrow} E'$, for every decomposition $E=A\sqcup B$, and for every $X\in \mathsf{S}[A]$, $Y\in\mathsf{S}[B]$, we have
		$$\mathsf{S}[\sigma](X\oplus Y)=\mathsf{S}[\sigma_{|A}](X)\oplus\mathsf{S}[\sigma_{|B}](Y)\ .$$
		\item[(M5')] The product maps $\mu_{A,B}$ are associative: for every decomposition $E=A\sqcup B\sqcup C$ and every $X\in\mathsf{S}[A], Y\in\mathsf{S}[B], Z\in\mathsf{S}[C]$, we have
		$$X\oplus (Y\oplus Z) = (X\oplus Y)\oplus Z\ .$$
		\item[(M6')] The product maps $\mu_{A,B}$ are commutative: for every decomposition $E=A\sqcup B$ and every $X\in\mathsf{S}[A]$, $Y\in\mathsf{S}[B]$, we have
		$$X\oplus Y = Y\oplus X\ .$$
		\item[(M7')] The direct sum operation has a neutral element: there exists a (necessarily unique) element $1_{\mathsf{S}}\in\mathsf{S}[\varnothing]$ such that for every finite set $E$ and every $X\in\mathsf{S}[E]$ we have 
		$$X\oplus 1_{\mathsf{S}}= X\ .$$
		\item[(M8')] The direct sum operation is compatible with restriction and contraction: for every decomposition $E=A\sqcup B\sqcup C\sqcup D$, for every $X\in\mathsf{S}[A\sqcup C]$, $Y\in\mathsf{S}[B\sqcup D]$, we have
		$$(X\oplus Y)|(A\sqcup B)=(X|A)\oplus (Y|B) \;\;\mbox{ and }\;\; (X\oplus Y)/(A\sqcup B) = (X/A)\oplus (Y/B)\ .$$
		\end{enumerate}
		\end{defi}	
		
		\begin{rem}\label{rem:cHmiss}
		A multiplicative minors system is sometimes called a \emph{bimonoid in set species}, and a connected multiplicative minors system is called a \emph{connected Hopf monoid in set species} in \cite{ardila-aguiar}.
		\end{rem}

		\begin{rem}
		There are two differences between our definition of a minors system and the definition of \cite{KMT}. The first and most important difference is that in [\emph{loc.\;cit.}]\ all minors systems are assumed to be connected; our setting discloses a wider range of applications. The second difference is that we use the language of set species whereas in [\emph{loc.\;cit.}]\ only isomorphism classes of combinatorial structures are considered. This allows us to retain more information and to have well-defined operations of restriction and contraction without having to choose representatives for isomorphism classes. All minors systems in [\emph{loc.\;cit.}]\ can be upgraded to minors systems in our sense.
		\end{rem}

	\subsection{Comonoids in linear species and coalgebras}
	
		We fix a commutative ring with unit $\KK$.

		\begin{defi}
		A linear species is a functor from the category of finite sets and bijections to the category of $\KK$-modules.
		\end{defi}
		
		More concretely, a linear species $\mathsf{V}$ associates to every finite set $E$ a $\KK$-module $\mathsf{V}[E]$ and to every bijection $\sigma:E\stackrel{\sim}{\rightarrow} E'$ a $\KK$-linear map $\mathsf{V}[\sigma]:\mathsf{V}[E]\rightarrow \mathsf{V}[E']$, such that $\mathsf{V}[\sigma\circ \tau]=\mathsf{V}[\sigma]\circ \mathsf{V}[\tau]$ and $\mathsf{V}[\mathrm{id}]=\mathrm{id}$. Recall from \cite{aguiarmahajanbook} that the category of linear species has the structure of a symmetric monoidal category with respect to the Cauchy product, defined by
		$$(\mathsf{V}\cdot \mathsf{W})[E] = \bigoplus_{E=A\sqcup B}\mathsf{V}[A]\otimes \mathsf{W}[B]\ .$$
		The unit $\mathsf{1}$ for this monoidal structure satisfies $\mathsf{1}[\varnothing]=\KK$ and $\mathsf{1}[E]=0$ for $E\neq\varnothing$.\medskip
		
		For a linear species $\mathsf{V}$ and an integer $n\geq 0$ we set
		$$\mathsf{V}_n\doteq \operatorname{colim}_{|E|=n}\mathsf{V}[E] \simeq \mathsf{V}[n]/\Sigma_n\ .$$
		This defines a functor from the category of linear species to the category of (non-negatively) graded $\KK$-modules, called the \emph{Fock functor} \cite[Chapter 15]{aguiarmahajanbook}. This functor is monoidal. In most of what follows, the reader may work \emph{after applying the Fock functor}, and mentally replace \enquote{monoid in linear species} by \enquote{graded algebra} and \enquote{comonoid in linear species} by \enquote{graded coalgebra}.\medskip
		
		We let $S\mapsto \KK S$ denote the linearization functor from sets to $\KK$-modules. Applying this functor to a set species $\mathsf{S}$ gives rise to a linear species that we simply denote $\KK\mathsf{S}$. For $\mathsf{S}$ a minors system, the coproduct maps gives rise to linear maps that we still denote by
		$$\Delta_{A,B}:\KK\mathsf{S}[E]\rightarrow\KK\mathsf{S}[A]\otimes\KK\mathsf{S}[B]\ .$$
		These maps assemble to a morphism $\Delta:\KK\mathsf{S}\rightarrow \KK\mathsf{S}\cdot\KK\mathsf{S}$ in the category of linear species. The map $\mathsf{S}[\varnothing]\rightarrow\{*\}$ gives rise to a morphism $\varepsilon:\KK\mathsf{S}\rightarrow\mathsf{1}$.
		
		\begin{prop}
		If $\mathsf{S}$ is a minors system then $(\KK\mathsf{S},\Delta,\varepsilon)$ is a comonoid in linear species.
		\end{prop}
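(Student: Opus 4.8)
The plan is to verify the three conditions defining a comonoid in linear species — naturality of the structure maps $\Delta$ and $\varepsilon$, coassociativity of $\Delta$, and counitality with respect to $\varepsilon$ — by reducing each to the corresponding set-level axiom (M1), (M2), (M3). The reduction is always the same: all the maps in question are $\KK$-linear maps out of the free $\KK$-modules $\KK\mathsf{S}[E]$, so every identity need only be checked on the basis $\mathsf{S}[E]$, where it is literally one of the axioms of a minors system. The only thing requiring care is the bookkeeping of the Cauchy product, i.e. identifying which set decompositions index the summands appearing in the iterated products.

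First I would unwind the structure maps. By definition of the Cauchy product, the $E$-component of $\Delta\colon\KK\mathsf{S}\to\KK\mathsf{S}\cdot\KK\mathsf{S}$ is $\bigoplus_{E=A\sqcup B}\Delta_{A,B}$, landing in $\bigoplus_{E=A\sqcup B}\KK\mathsf{S}[A]\otimes\KK\mathsf{S}[B]$, while the $E$-component of $\varepsilon\colon\KK\mathsf{S}\to\mathsf{1}$ is the augmentation $\KK\mathsf{S}[\varnothing]\to\KK$ when $E=\varnothing$ and the zero map otherwise. Naturality of $\Delta$ asks that for every bijection $\sigma\colon E\stackrel{\sim}{\rightarrow}E'$ the evident square commutes; unwinding the action of $\sigma$ on the Cauchy product, this breaks up over the decompositions $E=A\sqcup B$ (with image $E'=\sigma(A)\sqcup\sigma(B)$) into exactly the two identities of (M1), hence holds on basis elements. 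Naturality of $\varepsilon$ is immediate since its only nonzero component sits over $E=\varnothing$, on which every bijection acts as the identity.

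Next, for coassociativity, I would compute the two composites $\KK\mathsf{S}\to\KK\mathsf{S}\cdot\KK\mathsf{S}\cdot\KK\mathsf{S}$ componentwise. Tracking the Cauchy product, the $E$-component of $(\Delta\cdot\mathrm{id})\circ\Delta$ is $\bigoplus_{E=A\sqcup B\sqcup C}(\Delta_{A,B}\times\mathrm{id})\circ\Delta_{A\sqcup B,C}$ and that of $(\mathrm{id}\cdot\Delta)\circ\Delta$ is $\bigoplus_{E=A\sqcup B\sqcup C}(\mathrm{id}\times\Delta_{B,C})\circ\Delta_{A,B\sqcup C}$; these agree precisely because the summands indexed by each ordered decomposition $E=A\sqcup B\sqcup C$ agree, which is the displayed equation of (M2). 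For counitality, under the canonical isomorphisms $\mathsf{1}\cdot\KK\mathsf{S}\cong\KK\mathsf{S}\cong\KK\mathsf{S}\cdot\mathsf{1}$, the composite $(\varepsilon\cdot\mathrm{id})\circ\Delta$ at $E$ retains only the summand of $\Delta$ over $E=\varnothing\sqcup E$ and applies the augmentation to the first factor, giving $X\mapsto\mathrm{pr}_2(\Delta_{\varnothing,E}(X))=X/\varnothing$; dually $(\mathrm{id}\cdot\varepsilon)\circ\Delta$ at $E$ is $X\mapsto\mathrm{pr}_1(\Delta_{E,\varnothing}(X))=X|E$; both equal $X$ by (M3).

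I expect the only real obstacle to be purely notational: setting up the identification of the summands of $\KK\mathsf{S}\cdot\KK\mathsf{S}\cdot\KK\mathsf{S}$ (and of $\mathsf{1}\cdot\KK\mathsf{S}$, $\KK\mathsf{S}\cdot\mathsf{1}$) with the set decompositions appearing in (M2) and (M3), and carrying along the coherence isomorphisms of the monoidal structure on linear species. Once this is in place, each of the three verifications is immediate from the respective axiom. Alternatively, after applying the Fock functor one could phrase the whole check in the familiar language of graded coalgebras, but the direct componentwise argument above avoids any appeal to properties of the Fock functor and is already short.
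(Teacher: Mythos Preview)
Your proposal is correct and takes essentially the same approach as the paper's proof, which simply observes that coassociativity follows from (M2) and counitality from (M3). You have spelled out considerably more bookkeeping (the Cauchy-product indexing and the naturality check from (M1)) than the paper bothers to record, but the underlying argument is identical.
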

		
		\begin{proof}
		The coassociativity of $\Delta$ is a consequence of axiom (M2). The compatibility between $\Delta$ and $\varepsilon$ is a consequence of axiom (M3).
		\end{proof}
		
		The Fock functor sends $\KK\mathsf{S}$ to the graded $\KK$-module with degree $n$ component
		$$\operatorname{colim}_{|E|=n}\KK\mathsf{S}[E] \simeq \KK\mathsf{S}_n\ .$$
		The collection of the maps $\Delta_{A,B}$ give rise to linear maps
		$$\Delta_{m,n}:\KK\mathsf{S}_{m+n} \rightarrow \KK\mathsf{S}_m\otimes\KK\mathsf{S}_n\ .$$
		These maps, together with the map $\varepsilon:\KK\mathsf{S}_0\rightarrow\KK$, endow the graded $\KK$-module $\KK\mathsf{S}_\bullet$ with the structure of a graded coalgebra, where the coproduct is given by the familiar formula
		$$\Delta(X)=\sum_{A\subseteq E}X|A \otimes X/A \ .$$
		
		\medskip
		
		The direct sum operation gives rise to maps $\rho_E:\KK\mathsf{S}[E]\otimes\KK\mathsf{S}[\varnothing]\rightarrow \KK\mathsf{S}[E]$, and the unit element $1_{\mathsf{S}}$ gives rise to a map $\eta:\mathsf{1}\rightarrow \KK\mathsf{S}$. Because of axioms (M4), (M5), (M6), (M7), this gives $\KK\mathsf{S}[\varnothing]$ the structure of a commutative algebra and every $\KK\mathsf{S}[E]$ the structure of a module over $\KK\mathsf{S}[\varnothing]$, which is functorial in $E$. These structures are compatible with the comonoid structure in a sense that we only make explicit in the context of a multiplicative minors system. In this case one gets maps 
		$$\mu_{A,B}:\KK\mathsf{S}[A]\otimes\KK\mathsf{S}[B]\rightarrow\KK\mathsf{S}[E]$$ 
		which assemble to a morphism $\mu:\KK\mathsf{S}\cdot\KK\mathsf{S}\rightarrow\KK\mathsf{S}$ in the category of linear species.

		\begin{prop}
		If $\mathsf{S}$ is a multiplicative minors system then $(\KK\mathsf{S},\Delta,\varepsilon,\mu,\eta)$ is a commutative bimonoid in linear species.
		\end{prop}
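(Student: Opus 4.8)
The plan is to verify, in turn, that $(\KK\mathsf{S},\mu,\eta)$ is a commutative monoid in linear species and that this structure is compatible with the comonoid structure $(\KK\mathsf{S},\Delta,\varepsilon)$ already provided by the previous proposition, in the sense required of a bimonoid --- namely that $\Delta$ and $\varepsilon$ are morphisms of monoids, equivalently that $\mu$ and $\eta$ are morphisms of comonoids. Almost everything follows by linearizing the corresponding set-level identities, since $S\mapsto\KK S$ is a symmetric monoidal functor. Concretely: axiom (M4') says the maps $\mu_{A,B}$ are natural in the decomposition $E=A\sqcup B$ and hence assemble into a morphism $\mu\colon\KK\mathsf{S}\cdot\KK\mathsf{S}\to\KK\mathsf{S}$ of linear species; associativity of $\mu$ is the linearization of (M5'); the unit axiom for $\eta$ follows from (M7') (and (M6') on one side) together with $\mu_{E,\varnothing}=\rho_E$; and commutativity of $\mu$ is the linearization of (M6'). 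These steps require no computation beyond transcription.

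The substantive point is the bimonoid compatibility between $\mu$ and $\Delta$, i.e.\ the identity of morphisms $\KK\mathsf{S}\cdot\KK\mathsf{S}\to\KK\mathsf{S}\cdot\KK\mathsf{S}$
$$\Delta\circ\mu = (\mu\cdot\mu)\circ(\mathrm{id}\cdot\beta\cdot\mathrm{id})\circ(\Delta\cdot\Delta),$$
where $\beta$ is the braiding of the Cauchy product. I would check this componentwise: fix a decomposition $E=E_1\sqcup E_2$ and $X\in\mathsf{S}[E_1]$, $Y\in\mathsf{S}[E_2]$. Using $\Delta(Z)=\sum_{A\subseteq E}Z|A\otimes Z/A$, the right-hand side evaluates to $\sum_{A_1\subseteq E_1,\,A_2\subseteq E_2}\big((X|A_1)\oplus(Y|A_2)\big)\otimes\big((X/A_1)\oplus(Y/A_2)\big)$, the $(A_1,A_2)$-summand sitting in the component indexed by the decomposition $E=(A_1\sqcup A_2)\sqcup\big((E_1\setminus A_1)\sqcup(E_2\setminus A_2)\big)$. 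The left-hand side is $\Delta(X\oplus Y)=\sum_{A\subseteq E}(X\oplus Y)|A\otimes(X\oplus Y)/A$, and writing $A=A_1\sqcup A_2$ with $A_i=A\cap E_i$ identifies the index sets of the two sums. Matching summands thus reduces to the two equalities $(X\oplus Y)|A=(X|A_1)\oplus(Y|A_2)$ and $(X\oplus Y)/A=(X/A_1)\oplus(Y/A_2)$, which are exactly axiom (M8') applied to the four-part decomposition $E=A_1\sqcup A_2\sqcup(E_1\setminus A_1)\sqcup(E_2\setminus A_2)$ --- specializing the variables $(A,B,C,D)$ of (M8') to $(A_1,A_2,E_1\setminus A_1,E_2\setminus A_2)$, so that the hypotheses $X\in\mathsf{S}[A_1\sqcup(E_1\setminus A_1)]=\mathsf{S}[E_1]$ and $Y\in\mathsf{S}[A_2\sqcup(E_2\setminus A_2)]=\mathsf{S}[E_2]$ hold.

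It then remains to record the degenerate compatibilities. The equality $\varepsilon\circ\mu=\varepsilon\cdot\varepsilon$ holds because both sides are supported on the component $\KK\mathsf{S}[\varnothing]\otimes\KK\mathsf{S}[\varnothing]$, where $\mu$ restricts to $\mu_{\varnothing,\varnothing}$ and $\varepsilon$ is the augmentation, so both sides send $X\otimes Y$ to $1$; the equality $\Delta\circ\eta=\eta\cdot\eta$ follows from $\Delta_{\varnothing,\varnothing}(1_{\mathsf{S}})=1_{\mathsf{S}}\otimes1_{\mathsf{S}}$, a consequence of axiom (M3); and $\varepsilon\circ\eta=\mathrm{id}$ is trivial.

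The main obstacle I anticipate is purely organizational: keeping straight the correspondence between the four tensor factors appearing in $(\KK\mathsf{S}\cdot\KK\mathsf{S})\cdot(\KK\mathsf{S}\cdot\KK\mathsf{S})$ and the four blocks $A_1,A_2,E_1\setminus A_1,E_2\setminus A_2$, and correctly matching (M8')'s four-set decomposition against the shuffle map $\mathrm{id}\cdot\beta\cdot\mathrm{id}$ --- once the bookkeeping is fixed, the argument is formal. A quicker route, worth mentioning for context, is simply to note that a bimonoid in set species linearizes to a bimonoid in linear species and that, by definition (cf.\ Remark~\ref{rem:cHmiss}), a multiplicative minors system is precisely a commutative bimonoid in set species; but the componentwise computation above has the merit of pinpointing where axiom (M8') enters.
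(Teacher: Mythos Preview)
Your proof is correct and follows essentially the same approach as the paper's own proof: both verify the bimonoid axioms one by one, citing (M5') for associativity, (M6') for commutativity, (M7') for the unit, (M8') for the $\Delta$--$\mu$ compatibility, (M3) for $\Delta\circ\eta=\eta\cdot\eta$, and dismissing the $\varepsilon$--$\mu$ and $\varepsilon$--$\eta$ compatibilities as trivial. The paper's proof is terser, merely naming the relevant axiom at each step, whereas you spell out the componentwise computation for the key $\Delta$--$\mu$ compatibility; but the content is the same.
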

		
		\begin{proof}
		The associativity of $\mu$ is a consequence of axiom (M5') and its commutativity is a consequence of axiom (M6'). The compatibility between $\mu$ and $\eta$ is a consequence of axiom (M7'). The compatibility between $\Delta$ and $\mu$ is a consequence of axiom (M8'). The compatibility between $\Delta$ and $\eta$ follows from axiom (M3) since $\Delta_{\varnothing,\varnothing}(1_{\mathsf{S}})=(1_{\mathsf{S}}|\varnothing)\otimes (1_{\mathsf{S}}/\varnothing)=1_{\mathsf{S}}\otimes 1_{\mathsf{S}}$. The compatibility between $\varepsilon$ and $\mu$ is trivial, as is the compatibility between $\varepsilon$ and $\eta$.
		\end{proof}
		
		One can determine exactly when $\KK\mathsf{S}$ is a Hopf monoid in linear species, or in more classical terms when the graded bialgebra $\KK\mathsf{S}_\bullet$ is a Hopf algebra.
		
		\begin{prop}\label{prop:hopf when}
		For a multiplicative minors system $\mathsf{S}$, the bimonoid in linear species $\KK\mathsf{S}$ is a Hopf monoid in linear species if and only if the commutative monoid $\mathsf{S}[\varnothing]$ is an abelian group. This is the case in particular if $\mathsf{S}$ is connected.
		\end{prop}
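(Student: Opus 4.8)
The plan is to characterize the existence of an antipode for the bimonoid $\KK\mathsf{S}$ in terms of its arity-$\varnothing$ part, which by axiom (M3) is exactly the monoid algebra $\KK(\mathsf{S}[\varnothing])$ in which every $X\in\mathsf{S}[\varnothing]$ is grouplike (since $\Delta_{\varnothing,\varnothing}(X)=X\otimes X$), and then to observe that such a monoid bialgebra is Hopf precisely when the monoid is a group. For the latter: if $\mathsf{S}[\varnothing]$ is a group then $X\mapsto X^{-1}$ is an antipode; conversely, applying a putative antipode $S$ to a grouplike $X$ forces $X\oplus S(X)=1_{\mathsf{S}}$ in $\KK(\mathsf{S}[\varnothing])$, and reading off the coefficient of $1_{\mathsf{S}}$ after expanding $S(X)$ in the basis $\mathsf{S}[\varnothing]$ shows $X$ has a right inverse in the monoid, hence (by commutativity, or the mirror antipode axiom) an inverse.

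For the ``only if'' direction I would restrict a hypothetical antipode $S$ of $\KK\mathsf{S}$ to arity $\varnothing$. The Cauchy product degenerates there ($\varnothing$ has a unique decomposition), so evaluating the antipode identities $\mathrm{id}*S=S*\mathrm{id}=\eta\varepsilon$ at $\varnothing$ shows that $S[\varnothing]$ is an antipode for $\KK(\mathsf{S}[\varnothing])$; the preceding observation then forces $\mathsf{S}[\varnothing]$ to be a group.

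For the ``if'' direction, assume $M:=\mathsf{S}[\varnothing]$ is an abelian group, with antipode $S_0\colon\KK M\to\KK M$, $S_0(X)=X^{-1}$. I would work in the convolution monoid $\mathrm{End}(\KK\mathsf{S})$ filtered by arity, in which the antipode is, by definition, the convolution inverse of $\mathrm{id}$ if it exists. Introduce the species endomorphism $e$ equal to the identity in arity $\varnothing$ and zero in positive arity, together with $\bar e$ equal to $S_0$ in arity $\varnothing$ and zero otherwise. A short computation, using that a decomposition $E=A\sqcup B$ with both $e[A]$ and $\bar e[B]$ nonzero forces $E=\varnothing$, gives $e*\bar e=\bar e*e=\eta\varepsilon$, so $e$ is convolution-invertible, and an equally short one gives $\mathrm{id}=e*(\eta\varepsilon+N)$ with $N:=\bar e*(\mathrm{id}-e)$. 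The key point is then that $N$ is locally nilpotent for the arity filtration: since $N$ vanishes in arity $\varnothing$, one shows by induction on $k$ that $N^{*k}$ vanishes on $\KK\mathsf{S}[E]$ whenever $|E|<k$ (each convolution factor must consume a nonempty block). Hence $\eta\varepsilon+N$ has convolution inverse $\sum_{k\ge 0}(-1)^k N^{*k}$, a finite sum in every arity, and therefore $\mathrm{id}=e*(\eta\varepsilon+N)$ is convolution-invertible; that is, $\KK\mathsf{S}$ is a Hopf monoid. The final assertion is immediate, a connected minors system having $\mathsf{S}[\varnothing]=\{1_{\mathsf{S}}\}$, the trivial group.

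The step I expect to require the most care is this last direction. In the connected case one simply inverts $\mathrm{id}=\eta\varepsilon+(\mathrm{id}-\eta\varepsilon)$ by Takeuchi's geometric series, but here $\mathrm{id}-\eta\varepsilon$ is not locally nilpotent, since $X|\varnothing$ and $X/E$ are in general distinct elements of $\mathsf{S}[\varnothing]$ and so arity-$\varnothing$ data genuinely contributes to convolution powers. Factoring off the convolution-invertible arity-$\varnothing$ piece $e$ first, and only then running the geometric series on the residual positive-arity part $N$, is precisely what repairs the argument.
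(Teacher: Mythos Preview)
Your argument is correct. The paper's proof is a two-line appeal to \cite[Proposition~8.10]{aguiarmahajanbook}, which states that a bimonoid in linear species is a Hopf monoid if and only if its $\varnothing$-component is a Hopf algebra, followed by the observation that $\KK\mathsf{S}[\varnothing]$ is the monoid algebra of $\mathsf{S}[\varnothing]$. You have instead given a self-contained proof: your ``only if'' direction is exactly the easy half of that cited proposition, and your ``if'' direction is a direct reconstruction of its nontrivial half via a Takeuchi-type argument, with the pleasant twist of first peeling off the invertible arity-$\varnothing$ factor $e$ before running the geometric series on the locally nilpotent remainder $N$. What the paper's approach buys is brevity and a pointer to the general structural fact; what yours buys is independence from the reference and an explicit formula for the antipode, namely $S=\bigl(\sum_{k\ge 0}(-1)^k N^{*k}\bigr)*\bar e$. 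One small point worth making explicit in your write-up: the geometric series $\sum_k(-1)^kN^{*k}$ is a \emph{two-sided} inverse of $\eta\varepsilon+N$ (the telescoping works on both sides), so that together with the two-sided invertibility of $e$ you really do obtain a two-sided convolution inverse of $\mathrm{id}$, as required.
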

		
		\begin{proof}
		By \cite[Proposition 8.10]{aguiarmahajanbook}, $\KK\mathsf{S}$ is a Hopf monoid in linear species if and only if $\KK\mathsf{S}[\varnothing]$ is a Hopf algebra. Since $\KK\mathsf{S}[\varnothing]$ is the monoid algebra of the monoid $\KK\mathsf{S}$, the claim follows.
		\end{proof}
		
		\begin{rem}\label{rem:Hopf quotient}
		For $\mathsf{S}$ a minors system and $E$ a finite set, one can perform the quotient of $\KK\mathsf{S}[E]$ by the sub-$\KK$-module spanned by elements $(U\oplus S-S)$ for $U\in\mathsf{S}[\varnothing]$ and $S\in\mathsf{S}[E]$. The collection of these quotients inherits the structure of a connected comonoid in linear species. In the context of a multiplicative minors system, this quotient becomes a connected bimonoid in linear species, and thus a Hopf monoid in linear species. 
		As will appear clearly when we introduce Tutte characters, it is more convenient not to lose the information contained in $\mathsf{S}[\varnothing]$ and work with the whole bimonoid $\KK\mathsf{S}$.
		See Remark~\ref{rem:matroids are normalized} for an informal examination of the combinatorial meaning of this quotient in one example.
		\end{rem}

\section{Norms and universal Tutte characters}\label{sec:main}
		
	\subsection{Norms}
	
		For $U$ a set, we overload the notation $U$ to refer also to
		the constant set species defined by $U[E]=U$ for every finite set $E$, and $U[\sigma]=\mathrm{id}$ for every bijection $\sigma:E\stackrel{\sim}{\rightarrow}E'$. For $\mathsf{S}$ a set species, a morphism of set species $f:\mathsf{S}\rightarrow U$ is the datum, for every finite set $E$, of a map $f[E]:\mathsf{S}[E]\rightarrow U$, such that for every bijection $\sigma:E\stackrel{\sim}{\rightarrow} F$ we have $f[F]\circ \mathsf{S}[\sigma] = f[E]$. In other words, a morphism of set species $f:\mathsf{S}\rightarrow U$ is the same thing as a collection of maps $f_n:\mathsf{S}_n\rightarrow U$ for $n\geq 0$. The same holds for linear species.
		
		In all (commutative) monoids the law is written multiplicatively, unless otherwise specified.
		For example, we write the free monoid on one generator as $u^{\mathbb N}$, where $u$ is the generator,
		rather than simply $\mathbb N$ as we could if we worked additively.
	
		\begin{defi}
		Let $\mathsf{S}$ be a minors system. A \emph{norm} for $\mathsf{S}$ is the data of a commutative monoid $U$ and a morphism of set species $N:\mathsf{S}\rightarrow U$, that satisfies the following axioms.
		\begin{enumerate}
		\item[(N1)] $N(X)=N(X|A)\,N(X/A)$ for every $X\in\mathsf{S}[E]$ and $A\subseteq E$.
		\item[(N2)] $N(X\oplus Y) = N(X)$ for every $X\in\mathsf{S}[E]$ and $Y\in \mathsf{S}[\varnothing]$.
		\item[(N3)] $N(1_{\mathsf{S}})=1$.
		\end{enumerate}
		\end{defi}
		
		We note that relations (N2) and (N3) imply that we have $N(X)=1$ for every $X\in\mathsf{S}[\varnothing]$.\medskip
	
		A case that will be of special interest for us is when $U$ is the multiplicative monoid $R^\times$ of a commutative $\KK$-algebra~$R$. In this case we will denote the $\KK$-linear extension of~$N$ again by $N:\KK\mathsf{S}\rightarrow R$, and will still call the extension a norm.

		\begin{rem}\label{rem:rank functions}
		In \cite[Theorem 1]{KMT} one considers morphisms of set species $r:\mathsf{S}\rightarrow\mathbb{N}$ that satisfy $r(X)=r(X|A)+r(X/A)$, $r(X\oplus Y)=r(X)+r(Y)$ and $r(1_{\mathsf{S}})=0$:
		these appear as the parameters $r_j(S)$ in the exponents of~\cite[Theorem~1]{KMT}.
		Since our convention for monoids is multiplicative, we view such a morphism as a norm with values in the monoid $U=u^{\mathbb{N}}$ defined by $N(X)=u^{r(X)}$. More generally, a tuple $(r_1,\ldots,r_d)$ of such morphisms give rise to a norm with values in $U=u_1^{\mathbb{N}}\cdots u_d^{\mathbb{N}}$ defined by $N(X)=u_1^{r_1(X)}\cdots u_d^{r_d(X)}$. One can extend this linearly to get a norm with values in the polynomial ring $\KK[u_1,\ldots,u_d]$.
		\end{rem}

		\begin{rem}\label{rem:norms characters}
		If $\mathsf{S}$ is a multiplicative minors system then any norm $N$ automatically satisfies $N(X\oplus Y)=N(X)N(Y)$ for every $X\in\mathsf{S}[E]$, $Y\in\mathsf{S}[F]$. To see this use axiom (N1) for $A=E$ to get
		$$N(X\oplus Y)=N(X\oplus Y|\varnothing)\, N(X/E\oplus Y)$$ 
		and use axiom (N2) to get $N(X\oplus Y|\varnothing)=N(X)$ and $N(X/E\oplus Y)=N(Y)$.
		\end{rem}
		
		\begin{rem}
		In place of the constant set species $U$ one might wish to use any commutative monoid in set species. 
		This more general framework is well suited for treating combinatorial invariants whose set of variables depends on the ground set, such as the multivariate Tutte polynomial for matroids. 
		Since the general theory is essentially the same, we choose to stick to the constant case here and refer the interested reader to Section~\ref{sec:multivariate} for more details on the general case and remark~\ref{rem:Potts} for the matroid example.
		\end{rem}
		
		\begin{defi}
		Let $\mathsf{S}$ be a minors system, $R$ be a commutative $\KK$-algebra and $N:\KK\mathsf{S}\rightarrow R$ be a norm. The \emph{inverse norm} of $N$ is the norm $\overline{N}:\KK\mathsf{S}\rightarrow R$ defined for $X\in\mathsf{S}[E]$ by $\overline{N}(X)\doteq(-1)^{|E|}N(X)$.
		\end{defi}
		
		The terminology is justified by the following proposition. Let us recall from \cite[1.2.4]{aguiarmahajanbook} that for $(\mathsf{C},\Delta,\varepsilon)$ a comonoid in linear species, $(\mathsf{A},\mu,\eta)$ a monoid in linear species, and two morphisms of linear species $f,g:\mathsf{C}\rightarrow \mathsf{A}$, the \emph{convolution} of $f$ and $g$ is the morphism of linear species $f*g:\mathsf{C}\rightarrow \mathsf{A}$ defined as
		$$f*g\doteq \mu\circ (f\cdot g)\circ \Delta\ .$$
		This gives the space of morphisms of linear species from $\mathsf{C}$ to $\mathsf{A}$ the structure of an associative algebra whose unit element is the composition $\upsilon\doteq\eta\circ\varepsilon$. After applying the Fock functor, this is nothing but the usual convolution.
		
		\begin{prop}\label{prop:inverse convolution}
		The norm $\overline{N}$ is the inverse of $N$ for the convolution: we have $N*\overline{N}=\overline{N}*N=\upsilon$.
		\end{prop}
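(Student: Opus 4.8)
The plan is to verify the identity $N*\overline{N}=\upsilon$ (and symmetrically $\overline{N}*N=\upsilon$) directly on each $\KK\mathsf{S}[E]$, using the explicit description of the coproduct $\Delta(X)=\sum_{A\subseteq E}X|A\otimes X/A$. Unwinding the definition of convolution, for $X\in\mathsf{S}[E]$ one gets
$$(N*\overline{N})(X)=\sum_{A\subseteq E}N(X|A)\,\overline{N}(X/A)=\sum_{A\subseteq E}(-1)^{|E\setminus A|}\,N(X|A)\,N(X/A)\ .$$
The key algebraic input is axiom (N1), which gives $N(X|A)\,N(X/A)=N(X)$ for every $A\subseteq E$. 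Pulling $N(X)$ out of the sum, the computation reduces to the purely numerical identity $\sum_{A\subseteq E}(-1)^{|E\setminus A|}=\sum_{j=0}^{|E|}\binom{|E|}{j}(-1)^{j}$, which equals $0$ when $E\neq\varnothing$ and $1$ when $E=\varnothing$.

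Next I would compare this with the convolution unit $\upsilon=\eta\circ\varepsilon$, which on $X\in\mathsf{S}[E]$ is $0$ when $E\neq\varnothing$ and is $1\in R$ when $E=\varnothing$. Thus the two sides already agree for $E\neq\varnothing$; for $E=\varnothing$ one needs $N(X)=1$, which is precisely the consequence of (N2) and (N3) recorded just after the definition of a norm (apply (N2) with $X=1_{\mathsf{S}}$ and the given $Y\in\mathsf{S}[\varnothing]$, using $1_{\mathsf{S}}\oplus Y=Y$, and then (N3)). The computation of $\overline{N}*N$ is identical after replacing $(-1)^{|E\setminus A|}$ by $(-1)^{|A|}$, and the same binomial identity applies; hence $\overline{N}*N=\upsilon$ as well.

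There is essentially no serious obstacle here: the statement is a formal consequence of (N1) together with the alternating-sum identity, and the only point requiring a little care is matching the degree-zero part of the convolution unit, for which one uses that a norm is identically $1$ on $\mathsf{S}[\varnothing]$. One may equivalently run the whole argument after applying the Fock functor, working with the graded coalgebra $\KK\mathsf{S}_\bullet$ and the maps $\Delta_{m,n}$; the computation is word-for-word the same, since the convolution formula $(f*g)(X)=\sum_{A\subseteq E}f(X|A)\,g(X/A)$ is unchanged.
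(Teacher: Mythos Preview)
Your proof is correct and follows essentially the same approach as the paper's: both use axiom (N1) to factor out $N(X)$ from the convolution sum and then invoke the alternating-sum identity $\sum_{A\subseteq E}(-1)^{|A|}=0$ for $E\neq\varnothing$, with the degree-zero case handled by $N(X)=1$ on $\mathsf{S}[\varnothing]$. The only cosmetic difference is that the paper computes $\overline{N}*N$ first while you compute $N*\overline{N}$ first.
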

		
		\begin{proof}
		For $X\in\mathsf{S}[\varnothing]$ we have $\Delta_{\varnothing,\varnothing}(X)=X\otimes X$ and thus $(\overline{N}*N)(X)=\overline{N}(X)N(X)=1$. For $X\in\mathsf{S}[E]$ with $E\neq\varnothing$ one computes 
		$$(\overline{N}*N)(X)=\sum_{A\subseteq E}(-1)^{|A|}N(X|A)N(X/A) =\left(\sum_{A\subseteq E}(-1)^{|A|}\right)N(X) = 0 \ ,$$
		which completes the proof of $\overline{N}*N=\upsilon$. 
		The proof of $N*\overline{N}=\upsilon$ is similar, and indeed follows by exchanging the roles of $N$ and $\overline{N}$.
		\end{proof}

	\subsection{The Grothendieck monoid}

		\begin{defi}\label{def:U}
		For a minors system $\mathsf{S}$, we define its \emph{Grothendieck monoid} $U(\mathsf{S})$ to be the commutative monoid having generators $[X]$ for all isomorphism classes $X\in\mathsf{S}_\bullet$, subject to the relations:
		\begin{enumerate}
		\item $[X]=[X|A][X/A]$ for every $X\in\mathsf{S}[E]$ and $A\subseteq E$.
		\item $[X\oplus Y] = [X]$ for every $X\in \mathsf{S}[E]$ and $Y\in\mathsf{S}[\varnothing]$.
		\item $[1_{\mathsf{S}}]=1$.
		\end{enumerate}
		\end{defi}

		\begin{defi}
		The morphism $\mathsf{S}\rightarrow U(\mathsf{S}) \; , \; X\mapsto [X]$ is called the \emph{universal norm} for the minors system $\mathsf{S}$.
		\end{defi}
		
		The universal norm is universal in the sense that any norm factors uniquely through it. 
		In other words, the datum of a norm for $\mathsf{S}$ with values in $U$ is equivalent to that of a morphism of monoids $U(\mathsf{S})\rightarrow U$.
		
		\medskip
		
		In the next proposition we start simplifying the presentation of the Grothendieck monoid. For $X\in\mathsf{S}[E]$ and $\sigma:\{1,\ldots,n\}\stackrel{\sim}{\rightarrow} E$ a linear order on $E$, we denote by $X^\sigma_i\in\mathsf{S}[\{\sigma(i)\}]$ the object obtained from $X$ by contracting $\sigma(1),\ldots,\sigma(i-1)$ and deleting $\sigma(i+1),\ldots,\sigma(n)$.
		
		\begin{prop}\label{prop:pres X(S) first}
		The Grothendieck monoid $U(\mathsf{S})$ is generated by the elements $[X]$ for $X\in\mathsf{S}_1$, with relations:
		\begin{enumerate}
		\item for $X\in\mathsf{S}[E]$ and two linear orders $\sigma,\sigma':\{1,\ldots,n\}\stackrel{\sim}{\rightarrow}E$,
		$$\prod_{i=1}^n \, [X^\sigma_i] = \prod_{i=1}^n \, [X^{\sigma'}_i]\ .$$
		\item $[X\oplus Y]=[X]$ for every $X\in \mathsf{S}_1$ and $Y\in\mathsf{S}_0$.
		\end{enumerate}
		\end{prop}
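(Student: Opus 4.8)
The plan is to identify the monoid $M$ presented by the generators $[X]$, $X\in\mathsf{S}_1$, and the relations (1), (2) of the statement with the Grothendieck monoid $U(\mathsf{S})$, by constructing mutually inverse monoid homomorphisms $\Phi\colon M\to U(\mathsf{S})$ and $\Psi\colon U(\mathsf{S})\to M$. The homomorphism $\Phi$ will send a generator $[X]_M$ to the class $[X]\in U(\mathsf{S})$, and $\Psi$ will send $[X]\in U(\mathsf{S})$, for $X\in\mathsf{S}[E]$ with $|E|=n$, to $\prod_{i=1}^n[X^\sigma_i]_M$ for any linear order $\sigma$ on $E$.

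First I would record the basic identity obtained by iterating relation (1) of Definition~\ref{def:U}: contracting and deleting one element of $E$ at a time along a linear order $\sigma$, and using the coassociativity axiom (M2) to recognize the one-element objects produced along the way as the $X^\sigma_i$, one gets $[X]=\prod_{i=1}^n[X^\sigma_i]$ in $U(\mathsf{S})$ for every $X\in\mathsf{S}[E]$ and every $\sigma$. This has two consequences. On one hand, since $[X]=1$ in $U(\mathsf{S})$ for $X\in\mathsf{S}_0$ (the argument given after the definition of a norm, that (N2) and (N3) force $N(X)=1$ on $\mathsf{S}[\varnothing]$, applies verbatim to the universal norm), the identity shows that $U(\mathsf{S})$ is generated by the classes $[X]$ with $X\in\mathsf{S}_1$. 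On the other hand, both sides of relation (1) of the statement equal $[X]$, so that relation holds in $U(\mathsf{S})$; and relation (2) of the statement is a special case of relation (2) of Definition~\ref{def:U}. Hence $\Phi$ is well defined, and it is surjective by the generation statement.

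To construct $\Psi$, I would first note that $\prod_{i=1}^n[X^\sigma_i]_M$ is independent of $\sigma$ by relation (1) of $M$, so $\Psi$ is a well-defined function on the generators of $U(\mathsf{S})$; the task is to check it respects the three defining relations of $U(\mathsf{S})$. Relation (3) is immediate, the product being empty for $X=1_{\mathsf{S}}$. For relation (2), axiom (M8) gives $(X\oplus Y)^\sigma_i=X^\sigma_i\oplus Y$ with $X^\sigma_i\in\mathsf{S}_1$ and $Y\in\mathsf{S}_0$, so relation (2) of $M$ applies term by term to yield $\Psi([X\oplus Y])=\Psi([X])$. The substantive check is relation (1): given a decomposition $E=A\sqcup B$, I would choose $\sigma$ whose first $|A|$ values enumerate $A$, and then use the symmetric form of coassociativity (M2) to identify $X^\sigma_i$ with $(X|A)^{\sigma|_A}_i$ for $i\le|A|$ and with $(X/A)^{\sigma|_B}_{i-|A|}$ for $i>|A|$; splitting the product accordingly gives $\Psi([X])=\Psi([X|A])\,\Psi([X/A])$.

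Finally I would check that $\Phi$ and $\Psi$ are mutually inverse: for $X\in\mathsf{S}_1$ the one-element order gives $X^\sigma_1=X$, so $\Psi\Phi([X]_M)=[X]_M$, while $\Phi\Psi([X])=\prod_i[X^\sigma_i]=[X]$ by the identity established above, so both composites are the identity on generators. I expect the only real obstacle to be the bookkeeping in the verification of relation (1) of Definition~\ref{def:U} under $\Psi$ — namely choosing a linear order compatible with the splitting $E=A\sqcup B$ and invoking (M2) to match the one-element minors of $X$ with those of $X|A$ and of $X/A$; the rest is formal. (Equivalently, one could bypass $M$ and check directly that homomorphisms out of the presented monoid correspond bijectively to norms for $\mathsf{S}$, i.e.\ that it has the universal property characterizing $U(\mathsf{S})$; this comes down to the same computations.)
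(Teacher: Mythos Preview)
Your proposal is correct and follows essentially the same approach as the paper: you build mutually inverse monoid homomorphisms between $U(\mathsf{S})$ and the monoid presented by the statement, with the map in one direction sending $[X]$ to $\prod_i[X^\sigma_i]$ and the other the identity on the $\mathsf{S}_1$-generators. The verifications you outline (using (M2) to split along a linear order compatible with $A\sqcup B$, using (M8) for the $\oplus$-relation) are exactly those carried out in the paper; only the names of the two maps are swapped.
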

		
		\begin{proof}
		Let us denote by $V(\mathsf{S})$ the monoid defined by these generators and relations and show that it is isomorphic to $U(\mathsf{S})$. 
		\begin{enumerate}[--]
		\item Let $\varphi:U(\mathsf{S})\rightarrow V(\mathsf{S})$ be the morphism of monoids defined on the generators by $\varphi([X])=\prod_{i=1}^n[X^\sigma_i]$ for $X\in\mathsf{S}[E]$ and any choice of a linear order $\sigma:\{1,\ldots,n\}\stackrel{\sim}{\rightarrow}E$. For $X\in\mathsf{S}[E]$ and $A\subseteq E$ of cardinality $r$, let us choose a linear order $\sigma:\{1,\ldots,n\}\stackrel{\sim}{\rightarrow} E$ such that $\sigma(\{1,\ldots,r\})=A$. Then one has 
		$$\prod_{i=1}^r[X^\sigma_i]=\varphi([X|A])) \;\; \mbox{ and } \;\; \prod_{i=r+1}^n[X^\sigma_i]=\varphi([X/A])\ .$$
		Thus, we have $\varphi([X])=\varphi([X|A])\varphi([X/A])$ and $\varphi$ is compatible with relation (1) defining $U(\mathsf{S})$. For $X\in\mathsf{S}[E]$ and $Y\in\mathsf{S}[\varnothing]$, for $\sigma:\{1,\ldots,n\}\stackrel{\sim}{\rightarrow}E$, and for every $i=1,\ldots,n$, we have $(X\oplus Y)^\sigma_i= X^\sigma_i\oplus Y$. This shows that $\varphi$ is compatible with relation (2) defining $U(\mathsf{S})$. It is obviously compatible with relation (3) as well, and thus well-defined.
		\item Let $\psi:V(\mathsf{S})\rightarrow U(\mathsf{S})$ be the morphism of monoids defined on the generators by $\psi([X])=[X]$ for $X\in\mathsf{S}_1$. Let $X\in\mathsf{S}[E]$ and choose a linear order $\sigma:\{1,\ldots,n\}\stackrel{\sim}{\rightarrow} E$. By using relation (1) defining $U(\mathsf{S})$, an easy induction on $n$ shows that we have an equality in $U(\mathsf{S})$: $\prod_{i=1}^n[X^\sigma_i]=[X]$. This proves that $\psi$ is compatible with relation (1) defining $V(\mathsf{S})$. Since $\psi$ is obiously compatible with relation $(2)$ as well, it is well-defined. We also proved that we have $\psi(\prod_{i=1}^n [X^\sigma_i])=[X]$.
		\item It is now clear that $\psi\circ\varphi$ is the identity of $U(\mathsf{S})$ and that $\varphi\circ\psi$ is the identity of $V(\mathsf{S})$, which completes the proof.\qedhere
		\end{enumerate}
		\end{proof}
		
		We refine Proposition \ref{prop:pres X(S) first} further and show that the relations (1) in degree~$2$ are enough. 
		
		\begin{thm}\label{thm:pres X(S) second}
		The Grothendieck monoid $U(\mathsf{S})$ is generated by the classes $[X]$ for $X\in\mathsf{S}_1$, with relations:
		\begin{enumerate}
		\item $[X|e][X/e]=[X|f][X/f]$ for $X\in\mathsf{S}[\{e,f\}]$.
		\item $[X\oplus Y]=[X]$ for every $X\in \mathsf{S}_1$ and $Y\in\mathsf{S}_0$.
		\end{enumerate}
		\end{thm}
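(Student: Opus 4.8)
The plan is to deduce the statement from Proposition~\ref{prop:pres X(S) first} by showing that the relations~(1) appearing there --- one for each $X\in\mathsf{S}[E]$ and each \emph{pair} of linear orders on $E$ --- are already consequences of their special cases with $|E|=2$, which are precisely the relations~(1) of the present theorem (the relations~(2) being literally the same in both presentations). More precisely, let $W(\mathsf{S})$ be the commutative monoid presented by the generators and relations of the present theorem. Every defining relation of $W(\mathsf{S})$ is among the defining relations of the presentation of $U(\mathsf{S})$ given in Proposition~\ref{prop:pres X(S) first} --- indeed, for $X\in\mathsf{S}[\{e,f\}]$ the two linear orders $\{1,2\}\stackrel{\sim}{\rightarrow}\{e,f\}$ give $X^\sigma_1=X|e$, $X^\sigma_2=X/e$, $X^{\sigma'}_1=X|f$, $X^{\sigma'}_2=X/f$ --- so there is a canonical surjection $W(\mathsf{S})\twoheadrightarrow U(\mathsf{S})$, and it suffices to check that the ``long'' relations $\prod_{i=1}^n[X^\sigma_i]=\prod_{i=1}^n[X^{\sigma'}_i]$ hold in $W(\mathsf{S})$ for every $X\in\mathsf{S}[E]$ with $|E|=n$ and every pair of linear orders $\sigma,\sigma'\colon\{1,\dots,n\}\stackrel{\sim}{\rightarrow}E$.

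Since the symmetric group $\Sigma_n$ is generated by adjacent transpositions, any two linear orders on $E$ are joined by a chain in which consecutive terms differ by precomposition with some $(k,k+1)$, so by transitivity of equality I may assume $\sigma'=\sigma\circ(k,k+1)$. Write $e=\sigma(k)$ and $f=\sigma(k+1)$, so that $\sigma'(k)=f$, $\sigma'(k+1)=e$, and $\sigma'(i)=\sigma(i)$ for $i\notin\{k,k+1\}$. For each such $i$ the prefix $\{\sigma(1),\dots,\sigma(i-1)\}$ and the value $\sigma(i)$ are unchanged on passing from $\sigma$ to $\sigma'$, so $X^\sigma_i=X^{\sigma'}_i$ on the nose; hence the two products differ only in their $k$-th and $(k+1)$-st factors.

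It remains to treat those two factors. I will introduce the two-element minor $Z\doteq(X/P)\backslash\{\sigma(k+2),\dots,\sigma(n)\}\in\mathsf{S}[\{e,f\}]$, where $P=\{\sigma(1),\dots,\sigma(k-1)\}$. A short computation with the coassociativity axiom~(M2), in the forms $(Y|A\sqcup B)|A=Y|A$, $(Y/A)|B=(Y|A\sqcup B)/A$ and $(Y/A)/B=Y/(A\sqcup B)$, then gives $X^\sigma_k=Z|e$, $X^\sigma_{k+1}=Z/e$, $X^{\sigma'}_k=Z|f$ and $X^{\sigma'}_{k+1}=Z/f$. Consequently
$$\prod_{i=1}^n[X^\sigma_i]=\Bigl(\prod_{i\ne k,k+1}[X^\sigma_i]\Bigr)[Z|e][Z/e]\quad\text{and}\quad\prod_{i=1}^n[X^{\sigma'}_i]=\Bigl(\prod_{i\ne k,k+1}[X^\sigma_i]\Bigr)[Z|f][Z/f],$$
and the two right-hand sides agree by relation~(1) of the present theorem applied to $Z$. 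This establishes the long relations in $W(\mathsf{S})$, whence the surjection $W(\mathsf{S})\twoheadrightarrow U(\mathsf{S})$ is an isomorphism.

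The only place that needs care is the minor-theoretic bookkeeping in the previous paragraph: checking that contracting $P$ and then deleting the suffix $\{\sigma(k+2),\dots,\sigma(n)\}$ genuinely produces an object on $\{e,f\}$ from which $X^\sigma_k,X^\sigma_{k+1},X^{\sigma'}_k,X^{\sigma'}_{k+1}$ are recovered by the two one-element restrictions and the two one-element contractions on $\{e,f\}$. This is a mechanical application of (M2), so I anticipate no genuine difficulty; if one wishes to be fastidious about the exact two-element ground set occurring in relation~(1), one also invokes functoriality~(M1) to relabel, which is harmless.
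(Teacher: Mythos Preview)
Your proof is correct and follows essentially the same approach as the paper's own proof: reduce the ``long'' relations of Proposition~\ref{prop:pres X(S) first} to the case of adjacent transpositions, observe that the factors outside positions $k,k+1$ are unchanged, and identify the remaining two-factor equality with relation~(1) applied to the two-element minor obtained by contracting the prefix and deleting the suffix. Your framing via the auxiliary monoid $W(\mathsf{S})$ and the surjection $W(\mathsf{S})\twoheadrightarrow U(\mathsf{S})$ is slightly more explicit than the paper's, but the substance is identical.
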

		
		\begin{proof}
		Let us show that the relations (1) from Proposition \ref{prop:pres X(S) first} are consequences of the case $n=2$. 
		Since the group of permutations of $\{1,\ldots,n\}$ is generated by the transpositions $\tau_r=(r \, ,\, r+1)$ for $r=1,\ldots,n-1$, 
		it is enough to prove that for $X\in\mathsf{S}[E]$ and for any linear order $\sigma:\{1,\ldots,n\}\stackrel{\sim}{\rightarrow} E$ and any $r=1,\ldots,n-1$ we have 
		$$\prod_{i=1}^n \, [X^\sigma_i] = \prod_{i=1}^n \, [X^{\sigma \tau_r}_i] \ .$$
		Since $X^\sigma_i=X^{\sigma\tau_r}_i$ for $i\notin\{r, r+1\}$, it is enough to prove that 
		$$[X^\sigma_r][X^\sigma_{r+1}] = [X^{\sigma\tau_r}_r][X^{\sigma\tau_r}_{r+1}]\ .$$
		Let $X'\in\mathsf{S}[\{\sigma(r),\sigma(r+1)\}]$ be obtained from $X$ by contracting $\sigma(1),\ldots,\sigma(r-1)$ and deleting $\sigma(r+2),\ldots,\sigma(n)$. The above equality reads
		$$[X'|\sigma(r)] [X'/\sigma(r)] = [X'|\sigma(r+1)][X'/\sigma(r+1)] \ .$$
		This is a special case of relation (1) from the statement of the theorem, and the proof is complete.
		\end{proof}
		
		Theorem \ref{thm:pres X(S) second} shows that the Grothendieck monoid is a rather crude invariant of a minors system, in the sense that it only sees structures with ground sets of cardinality $\leq 2$. 
		However, it is the main ingredient in our imminent definition of the universal Tutte character,
		in which it serves as our replacement for the technology of \enquote{uniform selectors} of \cite{KMT}.
		Indeed, the definition of ``uniform'' in [\emph{loc.\;cit.}]\ says, in light of Proposition \ref{prop:pres X(S) first}(1),
 		that a uniform selector is exactly the restriction of a norm (valued in their target ring $\mathbb K[\{x_j\}_{j\in J}]$)
 		to structures with singleton ground sets.
				
		\begin{ex}
		The Grothendieck monoid of the minors system $\mathsf{Set}$ is the free monoid on one generator corresponding to a set with one element, i.e.\ we have an isomorphism $U(\mathsf{Set})\simeq u^{\mathbb{N}}$ which maps a finite set $E$ to the monomial $u^{|E|}$. As we will see in Section \ref{sec:matroids}, the  minors systems $\mathsf{Mat}$ and $\mathsf{Gra}$ have the same Grothendieck monoid, which is the free commutative monoid on two generators, namely the classes of a loop and a coloop.
		\end{ex}

	\subsection{Tutte characters}
	
		So far our constructions haven't taken advantage of the fact that our minors systems are not necessarily connected. The next ingredient is the key.
		
		\begin{defi}\label{def:twist}
		Let $\mathsf{S}$ be a minors system. A \emph{twist map} for $\mathsf{S}$ with values in a commutative monoid $U$ is a morphism of monoids $\tau:\mathsf{S}[\varnothing]\rightarrow U$.
		\end{defi}
		
		\begin{ex}
		A graph with an empty set of edges consists of a finite number of isolated vertices. A natural twist map for the minors system $\mathsf{Gra}$ is thus the morphism of monoids $\tau:\mathsf{Gra}[\varnothing]\rightarrow a^{\mathbb{N}}$ which maps the graph with $k$ isolated vertices to the monomial $a^k$. This example will be developed in more detail in Section \ref{ssec:graphs}.
		\end{ex}
		
		If $U$ is the multiplicative monoid of a $\KK$-algebra $R$, then we still denote by $\tau:\KK\mathsf{S}\rightarrow R$ the morphism of linear species extending $\tau$ linearly to $\KK\mathsf{S}[\varnothing]$ and satisfying $\tau(X)=0$ for $X\in\mathsf{S}[E]$, $E\neq \varnothing$, and still call this extension a twist map.
		
		\begin{defi}
		Let $R$ be a commutative $\KK$-algebra, $N_1,N_2:\KK\mathsf{S}\rightarrow R$ be two norms, and $\tau:\KK\mathsf{S}\rightarrow R$ be a twist map. The \emph{Tutte character} associated to the triple $(N_1,\tau,N_2)$ is the convolution product
		$$T_{N_1,\tau,N_2}\doteq N_1*\tau*N_2:\KK\mathsf{S} \rightarrow R\ .$$ 
		In other words, it is defined, for $X\in\mathsf{S}[E]$, by the formula
		$$T_{N_1,\tau,N_2}(X)=\sum_{A\subseteq E} N_1(X|A)\, \tau(X|A/A)\, N_2(X/A)\ .$$
		\end{defi}
		
		\begin{rem}
		For $X\in\mathsf{S}[\varnothing]$ we have $T_{N_1,\tau,N_2}(X)=\tau(X)$. For $X\in\mathsf{S}[E]$ and $Y\in\mathsf{S}[\varnothing]$ one easily shows that we have $T_{N_1,\tau,N_2}(X\oplus Y)=T_{N_1,\tau,N_2}(X)\tau(Y)$. 
		\end{rem}
		
		\begin{rem}
		If $\mathsf{S}$ is a multiplicative minors system then by Remark \ref{rem:norms characters} the norms $N_1$ and $N_2$ are compatible with the direct sum operation. Since $\tau$ is compatible with the direct sum operation, it follows that the convolution $T_{N_1,\tau,N_2}$ is a morphism of monoids in linear species: $T_{N_1,\tau,N_2}(X\oplus Y)=T_{N_1,\tau,N_2}(X)\,T_{N_1,\tau,N_2}(Y)$. This justifies our choice of the terminology \enquote{character}.
		\end{rem}
		
		\begin{rem}
		If the minors system $\mathsf{S}$ is connected then we necessarily have $\tau=\upsilon$ and the Tutte character is simply the convolution product $N_1*N_2$. When $N_1$ and $N_2$ are defined as in Remark \ref{rem:rank functions}, these are the characters that appear in \cite[Theorem 1]{KMT}.
		\end{rem}
		
		The Tutte character can be computed recursively thanks to the following deletion-contraction formula, with the base case $T_{N_1,\tau,N_2}(X)=\tau(X)$ for $X\in\mathsf{S}[\varnothing]$.
		
		\begin{prop}\label{prop: deletion contraction general}
		For every object $X\in\mathsf{S}[E]$ and every $e\in E$ we have the deletion-contraction recurrence formula:
		$$T_{N_1,\tau,N_2}(X)=N_1(X\backslash e^\c)\, T_{N_1,\tau,N_2}(X/e) + N_2(X/e^\c) \, T_{N_1,\tau,N_2}(X\backslash e)\ .$$
		\end{prop}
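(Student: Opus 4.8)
The plan is to work directly from the summation formula
$T_{N_1,\tau,N_2}(X)=\sum_{A\subseteq E} N_1(X|A)\,\tau((X|A)/A)\,N_2(X/A)$
and split the sum over subsets $A\subseteq E$ into the terms with $e\notin A$ and those with $e\in A$. I claim the first group of terms sums to $N_2(X/e^\c)\,T_{N_1,\tau,N_2}(X\backslash e)$ and the second to $N_1(X\backslash e^\c)\,T_{N_1,\tau,N_2}(X/e)$; adding the two identities then yields the recurrence. (The base case $X\in\mathsf{S}[\varnothing]$ is immediate since $N_1(X)=N_2(X)=1$ there.)

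For the terms with $e\notin A$, the index $A$ runs over subsets of $E\setminus e=E(X\backslash e)$. Using coassociativity (M2) I would match the $A$-summand of $T_{N_1,\tau,N_2}(X)$ against the $A$-summand of $T_{N_1,\tau,N_2}(X\backslash e)$: one has $(X\backslash e)|A=X|A$, hence $((X\backslash e)|A)/A=(X|A)/A$, and $(X\backslash e)/A=(X/A)\backslash e$. The two summands thus differ only in the last norm factor, $N_2(X/A)$ against $N_2((X/A)\backslash e)$. I would reconcile this by applying axiom (N1) to $X/A\in\mathsf{S}[E\setminus A]$ and the subset $(E\setminus A)\setminus e$ of its ground set, which gives $N_2(X/A)=N_2((X/A)\backslash e)\cdot N_2\big((X/A)/((E\setminus A)\setminus e)\big)$; by (M2) the second factor is $X/(E\setminus e)=X/e^\c$, independent of $A$. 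Pulling it out of the sum identifies $\sum_{e\notin A}$ with $N_2(X/e^\c)\,T_{N_1,\tau,N_2}(X\backslash e)$.

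For the terms with $e\in A$, I would write $A=A'\sqcup\{e\}$ with $A'$ ranging over subsets of $E\setminus e=E(X/e)$ and rewrite everything in terms of $X/e$. Coassociativity (M2) gives $X/A=(X/e)/A'$ and the mixed identity $(X|A)/\{e\}=(X/e)|A'$, whence $(X|A)/A=((X/e)|A')/A'$. For the first norm factor I would apply (N1) to $X|A$ split along the decomposition $A=\{e\}\sqcup A'$, namely $N_1(X|A)=N_1\big((X|A)|\{e\}\big)\cdot N_1\big((X|A)/\{e\}\big)$, and use $(X|A)|\{e\}=X|\{e\}=X\backslash e^\c$ (again (M2)) together with $(X|A)/\{e\}=(X/e)|A'$ to obtain $N_1(X|A)=N_1(X\backslash e^\c)\cdot N_1((X/e)|A')$. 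Since $N_1(X\backslash e^\c)$ does not depend on $A'$, it factors out of the sum, and the remaining sum over $A'$ is exactly $T_{N_1,\tau,N_2}(X/e)$.

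The argument is in essence bookkeeping; the points that need care are the coassociativity rewrites — especially the mixed identity $(X|A'\sqcup\{e\})/\{e\}=(X/e)|A'$ used in the $e\in A$ case — and the verification that the factors $N_2(X/e^\c)$ (in the $e\notin A$ case) and $N_1(X\backslash e^\c)$ (in the $e\in A$ case) are genuinely constant over their respective index sets, so that they come out of the sums. Everything else is a routine reindexing of the defining sum of $T_{N_1,\tau,N_2}$.
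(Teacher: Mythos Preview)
Your proposal is correct and follows essentially the same approach as the paper's proof: both split the defining sum according to whether $e\in A$ or $e\notin A$, use axiom (N1) to peel off the factor $N_1(X\backslash e^\c)$ (resp.\ $N_2(X/e^\c)$), and invoke the coassociativity identities (M2) to identify the remaining sum with $T_{N_1,\tau,N_2}(X/e)$ (resp.\ $T_{N_1,\tau,N_2}(X\backslash e)$). Your write-up is simply more explicit about which instance of (M2) is being used at each step.
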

		
		\begin{proof}
		We compute
		$$T_{N_1,\tau,N_2}(X) = \sum_{\substack{A\subseteq E\\ e\in A}} N_1(X|A)\tau(X|A/A)N_2(X/A) + \sum_{\substack{A\subseteq E\\ e\notin A}} N_1(X|A)\tau(X|A/A)N_2(X/A)\ .$$
		Using the fact that $N_1$ is a norm we can rewrite the first sum as a sum over $A'=A\setminus e\subseteq E\setminus e$, of terms 
		\begin{eqnarray*}
		&& N_1(X| A'\cup e) \,\tau(X|A'\cup e/A'\cup e) \,N_2(X/A'\cup e) \\
		 & = & N_1(X|A'\cup e|e)\,N_1(X|A'\cup e/e)\,\tau(X/e|A'/A')\,{N_2}(X/e/A')\\
		& = & N_1(X| e)\,N_1(X/e|A')\,\tau(X/e|A'/A')\,N_2(X/e/A')\ .
		\end{eqnarray*}
		Thus, the first sum equals $N_1(X\backslash e^\c)\, T_{N_1,\tau,N_2}(X/e)$. The second sum is treated in the same way.
		\end{proof}
		
	\subsection{The universal Tutte character}
		
		\begin{defi}\label{def:universal Tutte}
		The \emph{universal Tutte character} of the minors system $\mathsf{S}$ is the Tutte character with values in the monoid ring $\KK[U(\mathsf{S})\times\mathsf{S}[\varnothing]\times U(\mathsf{S})]$ associated to $N_1$ (resp.\ $N_2$) the universal norm on the first (resp.\ last) factor of $U(\mathsf{S})\times\mathsf{S}[\varnothing]\times U(\mathsf{S})$, and $\tau$ the twist map corresponding to the embedding of $\mathsf{S}[\varnothing]$ as the middle factor. It is denoted by 
		$$T^{\mathsf{S}}: \KK\mathsf{S} \rightarrow \KK\left[U(\mathsf{S})\times\mathsf{S}[\varnothing]\times U(\mathsf{S})\right]\ .$$
		\end{defi}
				
		It is universal in the following sense.		
		
		\begin{prop}\label{prop:universal}
		Let $R$ be a commutative $\KK$-algebra and $\Phi:\KK\mathsf{S}\rightarrow R$ be a morphism of linear species. Assume that $\Phi:\KK\mathsf{S}[\varnothing]\rightarrow R$ is a morphism of $\KK$-algebras and that $\Phi$ satisfies a deletion-contraction recurrence formula
		$$\Phi(X)=N_1(X\backslash e^\c)\, \Phi(X/e) + N_2(X/e^\c) \, \Phi(X\backslash e)$$
		for every $X\in\mathsf{S}[E]$ and $e\in E$, where $N_1,N_2:\KK\mathsf{S}\rightarrow R$ are two norms. 
		There then exists a morphism of $\KK$-algebras $\overline{\Phi}:\KK\left[U(\mathsf{S})\times\mathsf{S}[\varnothing]\times U(\mathsf{S})\right] \rightarrow R$ such that $\Phi=\overline{\Phi}\circ T^{\mathsf{S}}$.
		\end{prop}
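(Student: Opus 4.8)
The plan is to assemble $\overline{\Phi}$ from three monoid morphisms and then to recognise $\Phi$ as the resulting Tutte character by a deletion--contraction induction.

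First I would unwind what a $\KK$-algebra morphism out of the monoid ring amounts to. By the universal property of monoid rings, a $\KK$-algebra morphism $\KK[M]\to R$ is the same datum as a morphism of monoids $M\to R^{\times}$; and since $M=U(\mathsf{S})\times\mathsf{S}[\varnothing]\times U(\mathsf{S})$ is a product of monoids and $R$ is commutative, such a morphism is precisely a triple $(\nu_1,\tau,\nu_2)$ of monoid morphisms $\nu_1,\nu_2\colon U(\mathsf{S})\to R^{\times}$ and $\tau\colon\mathsf{S}[\varnothing]\to R^{\times}$, acting by $(a,b,c)\mapsto\nu_1(a)\,\tau(b)\,\nu_2(c)$. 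By the universal property of the Grothendieck monoid recalled after Definition~\ref{def:U}, the norms $N_1$ and $N_2$ factor uniquely through the universal norm, which produces $\nu_1$ and $\nu_2$; and I take $\tau$ to be the restriction of $\Phi$ to $\mathsf{S}[\varnothing]$, which is a twist map (a morphism of monoids $\mathsf{S}[\varnothing]\to R^{\times}$, extended by $0$) since the deletion--contraction recurrence is vacuous when $E=\varnothing$ and leaves $\Phi|_{\mathsf{S}[\varnothing]}$ to be exactly such a morphism. This triple determines the desired $\overline{\Phi}$.

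With these choices, checking $\overline{\Phi}\circ T^{\mathsf{S}}=T_{N_1,\tau,N_2}$ is just unwinding the definition of the universal Tutte character: for $X\in\mathsf{S}[E]$ one has
\[
\overline{\Phi}\bigl(T^{\mathsf{S}}(X)\bigr)=\sum_{A\subseteq E}\nu_1([X|A])\,\tau(X|A/A)\,\nu_2([X/A])=\sum_{A\subseteq E}N_1(X|A)\,\tau(X|A/A)\,N_2(X/A)=T_{N_1,\tau,N_2}(X).
\]
So it remains to prove $\Phi=T_{N_1,\tau,N_2}$. Both are morphisms of linear species $\KK\mathsf{S}\to R$, so it suffices to compare their values on each $X\in\mathsf{S}[E]$, which I would do by induction on $|E|$. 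For $E=\varnothing$ both sides equal $\tau(X)$: for $\Phi$ by the definition of $\tau$, and for the Tutte character by the remark following its definition. For $|E|\geq 1$ pick any $e\in E$; then $\Phi$ satisfies the deletion--contraction recurrence of Proposition~\ref{prop: deletion contraction general} by hypothesis, and $T_{N_1,\tau,N_2}$ satisfies the same recurrence, with the same norms $N_1,N_2$, by that proposition. Since $X/e$ and $X\backslash e$ both lie in $\mathsf{S}[E\setminus\{e\}]$, the inductive hypothesis applies to them, and the recurrence then forces $\Phi(X)=T_{N_1,\tau,N_2}(X)$.

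The computations are routine; the only point needing care is the claim that $\tau=\Phi|_{\mathsf{S}[\varnothing]}$ is a genuine twist map, i.e.\ a morphism of monoids sending $1_{\mathsf{S}}$ to $1$, which is where the hypotheses on $\Phi$ beyond the recurrence enter. An alternative packaging that avoids choosing $\tau$ by hand is to set $\tau\doteq\overline{N_1}*\Phi*\overline{N_2}$, so that $\Phi=N_1*\tau*N_2$ holds automatically by associativity of convolution and Proposition~\ref{prop:inverse convolution}; one still has to verify that this $\tau$ is supported on $\mathsf{S}[\varnothing]$ and multiplicative there, so the essential bookkeeping is unchanged.
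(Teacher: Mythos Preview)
Your proof is correct and follows essentially the same approach as the paper: define $\overline{\Phi}$ from the triple $(N_1,\Phi|_{\mathsf{S}[\varnothing]},N_2)$ and verify $\overline{\Phi}\circ T^{\mathsf{S}}=\Phi$ by induction on $|E|$ using the deletion--contraction recurrence on both sides. Your explicit flagging of the multiplicativity of $\tau=\Phi|_{\mathsf{S}[\varnothing]}$ (needed for $\overline{\Phi}$ to be an algebra map rather than merely $\KK$-linear) is a point the paper's proof leaves implicit.
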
	
		
		\begin{proof}
		Let us define $\overline{\Phi}$ by the formula
		$$\overline{\Phi}([X_1],Y,[X_2])=N_1(X_1)\,\Phi(Y)\,N_2(X_2)\ .$$
		By applying $\overline{\Phi}$ to the deletion-contraction formula for $T^{\mathsf{S}}$ one gets the deletion-contraction formula
		$$(\overline{\Phi}\circ T^{\mathsf{S}})(X)=N_1(X\backslash e^\c)\, (\overline{\Phi}\circ T^{\mathsf{S}})(X/e) + N_2(X/e^\c) \, (\overline{\Phi}\circ T^{\mathsf{S}})(X\backslash e)\ .$$
		Besides, for $Y\in\mathsf{S}[\varnothing]$ we have $\overline{\Phi}\circ T^{\mathsf{S}}(Y)=\Phi(Y)$, thus an easy induction on the cardinality of $E$ proves that $\overline{\Phi}\circ T^{\mathsf{S}}(X)=\Phi(X)$ for every $X\in\mathsf{S}[E]$.
		\end{proof}			
		
		\begin{ex}
		The universal Tutte character for the minors system $\mathsf{Set}$ has values in the ring $\KK[u_1,u_2]$ and is defined, for a finite set $E$, by 
		$$T^{\mathsf{Set}}(E)= \sum_{A\subseteq E} u_1^{|A|}u_2^{|E\setminus A|} = (u_1+u_2)^{|E|}\ .$$
		The deletion-contraction recurrence formula reads:
		$$(u_1+u_2)^{|E|}=u_1(u_1+u_2)^{|E\setminus\{e\}|} + u_2(u_1+u_2)^{|E\setminus\{e\}|}\ .$$
		
		As we will see in Section 5, the universal Tutte character for the minors systems $\mathsf{Mat}$ and $ \mathsf{Gra}$ is a polynomial which specializes to the classical Tutte polynomial.
		\end{ex}

	\subsection{The convolution formula}
	
		An important feature of Tutte characters is the following general convolution formula.
		
		\begin{thm}\label{thm:general convolution formula}
		Let $R$ be a commutative $\KK$-algebra, $N_0,N_1,N_2:\KK\mathsf{S}\rightarrow R$ be three norms and $\tau_1,\tau_2:\KK\mathsf{S}\rightarrow R$ be two twist maps. Then we have the convolution formula between Tutte characters:
		$$T_{\overline{N_0},\tau_1\tau_2,N_2}=T_{\overline{N_0},\tau_1,N_1}*T_{\overline{N_1},\tau_2,N_2} \ .$$
		In other words, for any $X\in\mathsf{S}[E]$ we have
		$$T_{\overline{N_0},\tau_1\tau_2,N_2}(X)= \sum_{A\subseteq E} T_{\overline{N_0},\tau_1,N_1}(X|A)\, T_{\overline{N_1},\tau_2,N_2}(X/A) \ .$$
		\end{thm}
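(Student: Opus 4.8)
The plan is to work entirely inside the convolution algebra of morphisms of linear species $\KK\mathsf{S}\to R$ (equivalently, after applying the Fock functor, the usual convolution algebra on $\operatorname{Hom}(\KK\mathsf{S}_\bullet,R)$), in which by definition $T_{N_1,\tau,N_2}=N_1*\tau*N_2$ and the unit is $\upsilon=\eta\circ\varepsilon$. Since convolution is associative, the right-hand side of the claimed identity is
$$T_{\overline{N_0},\tau_1,N_1}*T_{\overline{N_1},\tau_2,N_2}=\overline{N_0}*\tau_1*N_1*\overline{N_1}*\tau_2*N_2,$$
so it suffices to simplify the middle factor $N_1*\overline{N_1}$ and to identify $\tau_1*\tau_2$ with the pointwise product $\tau_1\tau_2$.

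First I would invoke Proposition~\ref{prop:inverse convolution}, which gives $N_1*\overline{N_1}=\upsilon$; since $\upsilon$ is the convolution unit this collapses the right-hand side to $\overline{N_0}*\tau_1*\tau_2*N_2$ (this is precisely why the statement is phrased with the inverse norms $\overline{N_0},\overline{N_1}$ rather than $N_0,N_1$). Next I would check the identity $\tau_1*\tau_2=\tau_1\tau_2$ directly, where $\tau_1\tau_2$ denotes the twist map $X\mapsto\tau_1(X)\tau_2(X)$ on $\mathsf{S}[\varnothing]$ (well defined, and again a monoid morphism, because $R$ is commutative), extended by zero to $\KK\mathsf{S}$. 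For $X\in\mathsf{S}[E]$ one has $(\tau_1*\tau_2)(X)=\sum_{A\subseteq E}\tau_1(X|A)\,\tau_2(X/A)$; since a twist map vanishes outside degree zero, the only possibly nonzero term requires simultaneously $A=\varnothing$ (so that $\tau_1(X|A)\neq 0$) and $A=E$ (so that $\tau_2(X/A)\neq 0$), hence $E=\varnothing$, and then $\Delta_{\varnothing,\varnothing}(X)=X\otimes X$ yields $(\tau_1*\tau_2)(X)=\tau_1(X)\tau_2(X)=(\tau_1\tau_2)(X)$; for $E\neq\varnothing$ both sides are zero. Combining, the right-hand side equals $\overline{N_0}*(\tau_1\tau_2)*N_2=T_{\overline{N_0},\tau_1\tau_2,N_2}$, which is the left-hand side; the explicit sum formula then follows by expanding this convolution along $\Delta(X)=\sum_{A\subseteq E}X|A\otimes X/A$.

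I do not expect a real obstacle here: the content is entirely the associativity of convolution together with Proposition~\ref{prop:inverse convolution}. The only points that need a little care are the bookkeeping of which norms appear barred (so that an adjacent pair $N_1*\overline{N_1}$ is available to cancel) and the small verification that convolving two degree-zero-supported maps reproduces their pointwise product. One may also remark that in the connected case, taking $\tau_1=\tau_2=\upsilon$, this reduces to $\overline{N_0}*N_2=(\overline{N_0}*N_1)*(\overline{N_1}*N_2)$, the abstract form of the Kook--Reiner--Stanton convolution identity.
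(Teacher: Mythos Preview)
Your proof is correct and follows essentially the same route as the paper's: expand both Tutte characters as convolution products, use associativity and Proposition~\ref{prop:inverse convolution} to collapse $N_1*\overline{N_1}$ to the unit, and identify $\tau_1*\tau_2$ with the pointwise product. You even spell out the last identification in more detail than the paper does.
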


		\begin{proof}
		One simply computes, using Proposition \ref{prop:inverse convolution}:
		\begin{eqnarray*}
		T_{\overline{N_0},\tau_1,N_1}*T_{\overline{N_1},\tau_2,N_2} & = & \overline{N_0} * \tau_1 *(N_1 * \overline{N_1}) * \tau_2 *N_2 \\
		& = &\overline{N_0} * (\tau_1 *\tau_2) * N_2\ .
		\end{eqnarray*}
		Now $\tau_1*\tau_2$ is simply the twist map corresponding to the pointwise product of $\tau_1$ and $\tau_2$, hence the result.
		\end{proof}
		
		If $N_0,N_1,N_2$ are three copies of the universal norm $\mathsf{S}\rightarrow U(\mathsf{S})$ and $\tau_1,\tau_2$ are two copies of the universal twist map $\mathsf{S}[\varnothing]\stackrel{\mathrm{id}}{\rightarrow} \mathsf{S}[\varnothing]$ then we get a \emph{universal convolution formula} with values in the algebra 
		$$\KK[U(\mathsf{S})\times \mathsf{S}[\varnothing]\times U(\mathsf{S})\times \mathsf{S}[\varnothing] \times U(\mathsf{S})]\ .$$
		 Every convolution formula in the sense of Theorem \ref{thm:general convolution formula} appears as a specialization of the universal convolution formula.
		
		\begin{rem}\label{rem:inverse U}
		The inversion $N\mapsto \overline{N}$ manifests itself as the automorphism of $\KK[U(\mathsf{S})]$ that sends a generator $[X]$, for $X\in\mathsf{S}[E]$, to $(-1)^{|E|}[X]$, or equivalently that sends a generator $[X]$, for $X\in\mathsf{S}_1$, to $-[X]$.
		\end{rem}
		
		\begin{rem}\label{rem:multiconvolution}
		One can iterate the convolution formula from Theorem \ref{thm:general convolution formula} and get the following \emph{iterated convolution formulae}, for a choice of norms $N_0,\ldots,N_n$ and twist maps $\tau_1,\ldots,\tau_n$:
		$$T_{\overline{N_0},\tau_1\cdots \tau_n,N_n} = T_{\overline{N_0},\tau_1,N_1} * T_{\overline{N_1},\tau_2,N_2} * \cdots * T_{\overline{N_{n-1}},\tau_n,N_n}\ .$$
		More explicitly, for $X\in\mathsf{S}[E]$:
		$$T_{\overline{N_0},\tau_1\cdots \tau_n,N_n}(X)=\sum_{\varnothing=A_0\subseteq A_1\subseteq \cdots \subseteq A_n=E} \left(\prod_{i=1}^n \,T_{\overline{N_{i-1}},\tau_i,N_i}(X|A_i/A_{i-1})\right)\ .$$
		\end{rem}
		
		\begin{ex}
		The universal convolution formula for the minors system $\mathsf{Set}$ lives in the ring $\KK[u_0,u_1,u_2]$ and reads:
		$$(-u_0+u_2)^{|E|}=\sum_{A\subseteq E}(-u_0+u_1)^{|A|}(-u_1+u_2)^{|E\setminus A|}\ .$$
		\end{ex}

\section{More on norms and Tutte characters}\label{sec:more}
	In this section we gather some longer remarks on and extensions of the foregoing theory.
	It can be skipped on a first reading.
	
	\subsection{Reduced Tutte characters}\label{ssec:reduced}
	
		One notices in the example of $\mathsf{Set}$ that the universal Tutte invariant $T^{\mathsf{Set}}(E)\in\KK[u_1,u_2]$ is a homogeneous polynomial of degree $|E|$. This is a general phenomenon that is easily stated in terms of rings graded by a monoid. 
		
		Let $U$ be a commutative monoid.  
		A \emph{$U$-graded commutative ring} is a commutative ring $R$ with a direct sum decomposition $R=\bigoplus_{x\in U}R_x$ (called a \emph{$U$-grading}\/)
		such that $R_xR_y\subseteq R_{xy}$ for any elements $x,y\in U$. 
		If $U=u^{\mathbb{N}}$ then a $U$-graded commutative ring is nothing but a non-negatively graded commutative ring $R=\bigoplus_{n\in\mathbb{N}} R_n$, and if $U=u_1^{\mathbb{N}}\cdots u_d^{\mathbb{N}}$ then a $U$-graded commutative ring is nothing but a non-negatively $d$-graded commutative ring $R=\bigoplus_{(n_1,\ldots,n_d)\in\mathbb{N}^d} R_{n_1,\ldots,n_d}$. 
		
		We give the ring $\KK[U(\mathsf{S})\times\mathsf{S}[\varnothing]\times U(\mathsf{S})]$ the $U(\mathsf{S})$-grading
		where the degree of a basis element $([X_1],Y,[X_2])$ is the product $[X_1][X_2]\in U(\mathsf{S})$. 
		
		\begin{prop}\label{prop:homogeneous}
		For $\mathsf{S}$ a minors system and $X\in\mathsf{S}[E]$, the universal Tutte character $T^{\mathsf{S}}(X)\in\KK[U(\mathsf{S})\times\mathsf{S}[\varnothing]\times U(\mathsf{S})]$ is $U(\mathsf{S})$-homogeneous of degree $[X]$.
		\end{prop}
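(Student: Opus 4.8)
The plan is to unwind the definition of the universal Tutte character and check homogeneity term by term. First I would recall that, by Definition~\ref{def:universal Tutte}, for $X\in\mathsf{S}[E]$ one has
$$T^{\mathsf{S}}(X)=\sum_{A\subseteq E} N_1(X|A)\,\tau(X|A/A)\,N_2(X/A),$$
where $N_1$ (resp.\ $N_2$) is the universal norm valued in the first (resp.\ last) copy of $U(\mathsf{S})$ and $\tau$ is the inclusion of $\mathsf{S}[\varnothing]$ as the middle factor. Under these identifications the summand indexed by $A$ is exactly the basis element $\bigl([X|A],\,X|A/A,\,[X/A]\bigr)$ of the monoid ring $\KK[U(\mathsf{S})\times\mathsf{S}[\varnothing]\times U(\mathsf{S})]$.

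Next I would compute the $U(\mathsf{S})$-degree of this basis element. By the definition of the grading recalled just before the statement, the degree of $\bigl([X_1],Y,[X_2]\bigr)$ is the product $[X_1][X_2]\in U(\mathsf{S})$, the middle coordinate being ignored; so the summand indexed by $A$ has degree $[X|A]\,[X/A]$. Now relation (1) in the presentation of the Grothendieck monoid (Definition~\ref{def:U}) asserts precisely that $[X]=[X|A][X/A]$ for every $X\in\mathsf{S}[E]$ and every $A\subseteq E$. Hence every summand of $T^{\mathsf{S}}(X)$ has degree $[X]$, and therefore $T^{\mathsf{S}}(X)$ is $U(\mathsf{S})$-homogeneous of degree $[X]$, as claimed.

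There is essentially no obstacle here: the statement is a direct consequence of the defining relation (1) of $U(\mathsf{S})$, which was tailored exactly so that norms — in particular the two universal norms appearing in $T^{\mathsf{S}}$ — are multiplicative along the coproduct. The only point deserving a word is that the middle $\mathsf{S}[\varnothing]$-coordinate of each basis monomial genuinely does not affect the degree, which is built into the choice of $U(\mathsf{S})$-grading on $\KK[U(\mathsf{S})\times\mathsf{S}[\varnothing]\times U(\mathsf{S})]$; thus the twist factor $\tau(X|A/A)$ contributes nothing and need not be accounted for.
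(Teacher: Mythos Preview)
Your proof is correct and follows essentially the same approach as the paper's own proof, which is a one-line argument: every monomial $([X|A],X|A/A,[X/A])$ has degree $[X|A][X/A]=[X]$ by the defining relation of the Grothendieck monoid. Your version simply spells this out in more detail.
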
				
		
		\begin{proof}
		Every monomial $([X|A],[X|A/A],[X/A])$ has degree $[X|A][X/A]=[X]$ by definition of the Grothendieck ring.
		\end{proof}
		
		This result says, roughly speaking, that there are twice as many variables coming from $U(\mathsf{S})$ in the universal Tutte character $T^{\mathsf S}$ as there ``should'' be,
		and that the information in $T^{\mathsf S}$ can be wholly recovered from an invariant where half of these variables are specialized away.
		As we will now explain, the ``unnecessary'' twins of the variables exist to allow for prefactors,
		which are ubiquitous when working with deletion-contraction recurrences.
		
		For concreteness, to make the explanation easier, let us work after applying a norm $U(\mathsf{S})\rightarrow u_1^{\mathbb{N}}\cdots u_d^{\mathbb{N}}$.
		To choose such a norm amounts to choosing maps $r_1,\ldots,r_d:\mathsf{S}\rightarrow \mathbb{N}$ as in Remark \ref{rem:rank functions}. 
		We also assume for simplicity that we are applying the trivial twist map $S[\varnothing]\rightarrow \{*\}$. Then the image of the universal Tutte invariant is an element $T(X)\in\KK[u_{i,1},u_{i,2} \; , \; i=1,\ldots, d]$.
		This polynomial ring is $d$-graded, where the degree of $u_{i,1}$ and $u_{i,2}$ is the $i$\/th standard basis vector $e_i$. 
		What the above proposition is saying is that $T(X)$ is $d$-homogeneous of degree $(r_1(X),\ldots,r_d(X))$. 
		
		For a choice $\varepsilon=(\varepsilon_1,\ldots,\varepsilon_d)\in\{1,2\}^n$, one can define a \emph{reduced} Tutte character 
		$$T_\varepsilon:\KK\mathsf{S}\rightarrow\KK[u_1,\ldots,u_d]$$ 
		by setting $u_{i,\varepsilon_i}$ to $u_i$ and $u_{i,3-\varepsilon_i}$ to $1$. The homogeneity property means that one can reconstruct $T(X)$ from $T_\varepsilon(X)$ up to a prefactor:
		$$T(X)(u_{1,1},\ldots,u_{d,2})
		= u_{1,3-\varepsilon_1}^{r_1(X)}\ldots u_{d,3-\varepsilon_d}^{r_d(X)} \cdot T_\varepsilon(X){\left(\frac{u_{1,\varepsilon_1}}{u_{1,3-\varepsilon_1}} ,\ldots, \frac{u_{d,\varepsilon_d}}{u_{d,3-\varepsilon_d}}\right)}\ .$$
		Among these $2^d$ natural choices of reduced Tutte characters, there does not seem to be a reason to declare any one of them canonical in general, 
		though in certain special cases some choices are more relevant than others. 
		For instance, in the case of matroids, where $d=2$ (see Section \ref{sec:matroids} for more details),
		the classical corank-nullity polynomial is a preferred choice;
		this is because after a translation of the variables it becomes the Tutte polynomial, whose coefficients remain non-negative
		and gain a new combinatorial interpretation in terms of basis activities.
		
		The disadvantage to working with reduced Tutte characters is that it leads to formulae  which still contain many prefactors, and the source of these becomes more obscure. 
		For instance, it is easier to write the convolution formulae  in the unreduced version before specializing to the reduced version.
	
	\subsection{Functoriality and duality}\label{sec:functoriality duality}
	
		As already observed in \cite[Theorems 2 and 17]{KMT}, one can explain certain identities between Tutte characters by morphisms between minors systems and considerations of duality.
		
		\begin{defi}
		Let $\mathsf{S}$ and $\mathsf{S}'$ be two minors systems (resp.\ multiplicative minors systems). A \emph{morphism of minors systems} (resp.\ a \emph{morphism of multiplicative minors systems}) from $\mathsf{S}$ to $\mathsf{S}'$ is a morphism of set species $f:\mathsf{S}\rightarrow \mathsf{S}'$ that is compatible with the maps $\Delta_{A,B}$ and $\rho_E$ (resp.\ with the maps $\Delta_{A,B}$ and $\mu_{A,B}$) and sends $1_{\mathsf{S}}$ to~$1_{\mathsf{S}'}$.
		\end{defi}
		
		This makes minors systems into a category whose final object is the minors system $\mathsf{Set}$.\medskip
		
		A morphism $f:\mathsf{S}\rightarrow\mathsf{S}'$ between two minors systems induces a morphism of comonoids in linear species $\KK\mathsf{S}\rightarrow \KK\mathsf{S}'$, which is a morphism of bimonoids in linear species in the multiplicative context. Every norm $N'$ for $\mathsf{S}'$ induces a norm $N=N'\circ f$ for $\mathsf{S}$ and this is reflected in a morphism of monoids between the Grothendieck monoids $U(\mathsf{S})\rightarrow U(\mathsf{S}')$. We then have an induced map $f:\KK[U(\mathsf{S})\times \mathsf{S}[\varnothing]\times U(\mathsf{S})] \rightarrow \KK[U(\mathsf{S}')\times \mathsf{S}'[\varnothing]\times U(\mathsf{S}')]$ and the universal Tutte characters satisfy the functoriality identity:
		$$T^{\mathsf{S}'}\circ f = f\circ T^{\mathsf{S}}\ .$$
		
		For $N'_1,N'_2$ two norms for $\mathsf{S}'$ and $\tau'$ a twist map, this gives an equality of Tutte characters
		$$T_{N'_1,\tau',N'_2} \circ f = T_{N'_1\circ f,\tau'\circ f,N'_2\circ f}\ .$$
	
		\begin{defi}
		For a minors system $\mathsf{S}$, the \emph{opposite} minors system $\mathsf{S}^{\mathrm{op}}$ has the same underlying set species and has structural maps defined in the following way.
		\begin{enumerate}
		\item The coproduct map $\Delta^{\mathrm{op}}_{A,B}$ is the composition of $\Delta_{B,A}$ and the exchange of the factors $\mathsf{S}[A]$ and $\mathsf{S}[B]$.
		\item The map $\rho^{\mathrm{op}}_E$ equals the map $\rho_E$.
		\end{enumerate}
		\end{defi}
		
		The comonoid in linear species $\KK\mathsf{S}^{\mathrm{op}}$ has the same underlying linear species as that of $\mathsf{S}$, with the opposite comonoid structure (i.e., exchange the two sides of the tensor product). The Grothendieck monoid of $\mathsf{S}^{\mathrm{op}}$ is naturally isomorphic to that of $\mathsf{S}$. The universal Tutte character of $\mathsf{S}^{\mathrm{op}}$ is obtained from that of $\mathsf{S}$ by exchanging the two factors $U(\mathsf{S})$ in $\KK[U(\mathsf{S})\times \mathsf{S}[\varnothing]\times U(\mathsf{S})]$. This explains duality properties for Tutte characters when there is an isomorphism $\mathsf{S}\stackrel{\sim}{\rightarrow} \mathsf{S}^{\mathrm{op}}$, e.g.\ in the case of $\mathsf{S}=\mathsf{Mat}$ (see Section \ref{sec:matroids} for more details).
		
	\subsection{Multivariate Tutte characters}\label{sec:multivariate}
	
		In our formalism, norms and Tutte characters have values in $\KK$-algebras, i.e.\ constant monoids in linear species. It is natural to extend this to general (commutative) monoids in linear species. 
		In the present work, rather than developing a general theory, we explain only how one can build multivariate Tutte characters from norms. 
		For simplicity we only treat the case of trivial twist maps. The motivating example is that of the multivariate Tutte polynomial for matroids: see remark~\ref{rem:Potts} below.\medskip
	
	    For $V_1$ and $V_2$ two $\KK$-modules, viewed as constant linear species, the Cauchy tensor product $V_1\cdot V_2$ is no longer constant and is given, for a finite set $E$, by
	    $$(V_1\cdot V_2)[E] = \bigoplus_{E=A\sqcup B} V_1\otimes V_2\ .$$
	    In particular, a vector in $(V_1\cdot V_2)[E]$ remembers the datum of a decomposition $E=A\sqcup B$. For notational purposes, we artificially keep track of this datum by introducing dummy variables $\alpha_{e,1}$ and $\alpha_{e,2}$, for $e\in E$, and define a linear species $(V_1\cdot V_2)'$ given by
	    $$(V_1\cdot V_2)'[E]=(V_1\otimes V_2)[\{\alpha_{e,1},\alpha_{e,2} \; : \; e\in E\}]\ .$$
	    There is an embedding $(V_1\cdot V_2)\hookrightarrow (V_1\cdot V_2)'$ defined on the summand indexed by a decomposition $E=A\sqcup B$ by
	    $$v_1\otimes v_2 \mapsto (v_1\otimes v_2)\left(\prod_{e\in A}\alpha_{e,1} \right)\left(\prod_{e\in B}\alpha_{e,2}\right)\ .$$
	    
	    Let $\mathsf{S}$ be a minors system, $R_1$ and $R_2$ be two commutative $\KK$-algebras, and $N_1:\KK\mathsf{S}\rightarrow R_1$, $N_2:\KK\mathsf{S}\rightarrow R_2$ be two norms. We let $\widetilde{N}_1:\KK\mathsf{S}\rightarrow R_1\cdot R_2$ and $\widetilde{N}_2:\KK\mathsf{S}\rightarrow R_1\cdot R_2$ denote the norms that they induce. The \emph{multivariate Tutte character} associated to $N_1$ and $N_2$ is then the convolution product
	    $$\widetilde{T}_{N_1,N_2} \doteq \widetilde{N_1} * \widetilde{N_2} \, : \KK\mathsf{S}\rightarrow R_1\cdot R_2\ .$$
	    
	    If we view it as a morphism of linear species $\KK\mathsf{S}\rightarrow (R_1\cdot R_2)'$ then it is simply given, for $X\in\mathsf{S}[E]$, by the formula
	    $$\widetilde{T}_{N_1,N_2}(X) = \sum_{A\subseteq E} (N_1(X|A) \otimes N_2(X/A))\left(\prod_{e\in A}\alpha_{e,1}\right)\left(\prod_{e\notin A}\alpha_{e,2}\right)\ .$$
		
		These multivariate Tutte characters satisfy a deletion-contraction formula and convolution formulae  similar to the constant case, for which the proofs are exactly the same as those above.
		
		The observations of Section~\ref{ssec:reduced} apply here just as in the constant case.
		That is, half of the variables $\alpha_{e,i}$ are redundant and one does not lose information by setting $\alpha_{e,1}=\alpha_e$ and $\alpha_{e,2}=1$.
		
	\subsection{More general deletion-contraction recurrences}
	
		Let $\mathsf{S}$ be a minors system. In Theorem \ref{thm:general convolution formula} we proved that the universal Tutte character $T^{\mathsf{S}}$ is universal with respect to morphisms $\Phi:\KK\mathsf{S}\rightarrow R$ which are multiplicative on $\mathsf{S}[\varnothing]$ and satisfy a deletion-contraction recurrence formula of the form
		\begin{equation}\label{eq:delcont recurrence}
		\Phi(X)=N_1(X\backslash e^\c)\,\Phi(X/e) + N_2(X/e^\c)\,\Phi(X\backslash e)\ ,
		\end{equation}
		where $N_1,N_2:\KK\mathsf{S}\rightarrow R$ are norms. 
		
		If one wishes to make a general study of deletion-contraction recurrence formulae,
		one might not wish to adopt \emph{a priori} the assumption
		that one's families of coefficients are norms, or indeed have any particular predefined structure.
		In Sections \ref{sec:OW} and~\ref{sec:BR} we will see two examples of previous research
		which started with such an expansive notion of the formulae of interest,
		but wound up discovering that, under some non-zero-divisor assumptions, 
		the coefficients had to satisfy exactly the restrictions that our norms impose on them.
		We regard this as a strong argument that restricting to norms in 
		investigating formula \eqref{eq:delcont recurrence} is a \emph{natural} thing to do.
\medskip
		
		To treat the general case,
		let $N_1,N_2:\KK\mathsf{S}_1\rightarrow R$ and $\tau:\KK\mathsf{S}_0\rightarrow R$ be any linear maps,
		i.e.\ linear extensions of arbitrary set functions 
		$\mathsf{S}_1\rightarrow R$ and $\mathsf{S}_0\rightarrow R$ respectively.
		Suppose that we want to define a morphism $\Phi:\KK\mathsf{S}\rightarrow R$ 
		which satisfies the recurrence formula \eqref{eq:delcont recurrence} with the base case $\Phi(X)=\tau(X)$ if $X\in\mathsf{S}[\varnothing]$. Such a $\Phi$ may be ill-defined because applying the recurrence formula in different orders may lead to different results. 
		We say that $\Phi$ is \emph{well-defined up to cardinality $n$} if the recurrence formula unambiguously defines $\Phi(X)$ for $X\in\mathsf{S}[E]$ whenever $|E|\leq n$. 
		Note that $\Phi$ is automatically well-defined up to cardinality~$1$.\medskip
		
		For $X\in\mathsf{S}[E]$ and $\{e,f\}\subseteq E$, we use the abbreviations $X_{e,f}=X\backslash\{e,f\}^\c$ and $X^{e,f}=X/\{e,f\}^\c$, which are both elements of $\mathsf{S}[\{e,f\}]$.
		
		\begin{prop}\label{prop:general recurrence}
		Assume that $\Phi$ is well-defined up to cardinality $n-1$. Then it is well-defined up to cardinality $n$ if and only if 
		for every choice of a finite set $E$ of cardinality $n$, two elements $e,f\in E$ and an element $X\in\mathsf{S}[E]$ we have
		\begin{equation}\label{eq:general recurrence}
		\begin{split}
		\big(N_1&(X_{e,f}|e)\,N_1(X_{e,f}/e)-N_1(X_{e,f}|f)\,N_1(X_{e,f}/f)\big)\,\Phi(X/e,f) \\
		& =\big(N_2(X^{e,f}|e)\,N_2(X^{e,f}/e)-N_2(X^{e,f}|f)\,N_2(X^{e,f}/f)\big)\,\Phi(X\backslash e,f)\ .
		\end{split}
		\end{equation}
		\end{prop}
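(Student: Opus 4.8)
The plan is to reduce the claim to a concrete computation: expand the recurrence \eqref{eq:delcont recurrence} twice, once in the order ``$e$ then $f$'' and once ``$f$ then $e$'', and compare the two outcomes. First I would reformulate what well-definedness at cardinality $n$ means. Since $\Phi$ is well-defined up to cardinality $n-1$, for any $X\in\mathsf{S}[E]$ with $|E|=n$ and any $e\in E$ the expression
$$\Phi_e(X)\doteq N_1(X\backslash e^\c)\,\Phi(X/e)+N_2(X/e^\c)\,\Phi(X\backslash e)$$
is a well-defined element of $R$, because $X/e$ and $X\backslash e$ have cardinality $n-1$. Since the only defining rule for $\Phi(X)$ with $E\neq\varnothing$ is the recurrence, any evaluation of $\Phi(X)$ applies the recurrence at some element in its first step; hence $\Phi$ is well-defined up to cardinality $n$ if and only if $\Phi_e(X)=\Phi_f(X)$ for every such $X$ and all $e,f\in E$. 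When $e=f$ this is trivial and \eqref{eq:general recurrence} reads $0=0$, so it remains to treat $e\neq f$.

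The second step is to expand once more. Applying the recurrence at $f$ to $\Phi(X/e)$ and to $\Phi(X\backslash e)$ expresses $\Phi_e(X)$ as a $\KK$-linear combination of the four values $\Phi(X/\{e,f\})$, $\Phi(X\backslash\{e,f\})$, $\Phi\big((X/e)\backslash f\big)$, $\Phi\big((X\backslash e)/f\big)$; similarly, applying it at $e$ to $\Phi(X/f)$ and $\Phi(X\backslash f)$ expresses $\Phi_f(X)$ as a combination of the same four values, since coassociativity (M2) identifies $(X\backslash f)/e$ with $(X/e)\backslash f$ and $(X/f)\backslash e$ with $(X\backslash e)/f$. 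The coefficients are products of two values of $N_1$ or of $N_2$ at singleton-ground-set structures, and one more use of (M2) rewrites each of those in terms of $X_{e,f}=X|\{e,f\}$ and $X^{e,f}=X/\{e,f\}^\c$; for instance $X\backslash e^\c=X_{e,f}|e$ and $X/e^\c=X^{e,f}/f$, and the coefficient of $\Phi(X/\{e,f\})$ coming from contracting $e$ and then $f$ is $N_1(X_{e,f}|e)\,N_1(X_{e,f}/e)$.

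The key observation is that the two ``mixed'' terms have the same coefficient in $\Phi_e(X)$ and in $\Phi_f(X)$: the coefficient of $\Phi\big((X/e)\backslash f\big)$ equals $N_1(X_{e,f}|e)\,N_2(X^{e,f}/e)$ in both, and that of $\Phi\big((X\backslash e)/f\big)$ equals $N_1(X_{e,f}|f)\,N_2(X^{e,f}/f)$ in both. Hence these cancel in the difference, which reduces to
\begin{align*}
\Phi_e(X)-\Phi_f(X) ={}& \big(N_1(X_{e,f}|e)N_1(X_{e,f}/e)-N_1(X_{e,f}|f)N_1(X_{e,f}/f)\big)\,\Phi(X/\{e,f\}) \\
&{}-\big(N_2(X^{e,f}|e)N_2(X^{e,f}/e)-N_2(X^{e,f}|f)N_2(X^{e,f}/f)\big)\,\Phi(X\backslash\{e,f\})\ .
\end{align*}
This vanishes exactly when \eqref{eq:general recurrence} holds for $X,e,f$, and combined with the first step this proves the proposition. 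I expect the only real difficulty to be the bookkeeping in the last two steps: tracking all eight terms of the double expansion, evaluating each coefficient via a short use of (M2), and verifying the cancellation of the mixed terms — routine but error-prone.
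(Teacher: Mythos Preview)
Your proposal is correct and follows essentially the same approach as the paper: reduce well-definedness at cardinality $n$ to the equality $\Phi_e(X)=\Phi_f(X)$, expand each side by one further application of the recurrence, observe that the two mixed terms carry identical coefficients and cancel, and rearrange the surviving terms into \eqref{eq:general recurrence}. Your write-up is slightly more explicit than the paper's in justifying why only the first step of the recurrence needs to be checked and in tracking the coefficients via $X_{e,f}$ and $X^{e,f}$, but the argument is the same.
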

		
		\begin{proof}
		Since $\Phi$ is well-defined up to cardinality $n-1$, it is well-defined up to cardinality $n$ if and only if for every finite set $E$ of cardinality $n$ and every $e,f\in E$ we have
		$$N_1(X\backslash e^\c)\,\Phi(X/e)+N_2(X/e^\c)\,\Phi(X\backslash e) = N_1(X\backslash f^\c)\,\Phi(X/f)+N_2(X/f^\c)\,\Phi(X\backslash f)\ .$$
		Using the recurrence formula, one may compute $\Phi(X/e)$, $\Phi(X\backslash e)$, $\Phi(X/f)$ and $\Phi(X\backslash f)$ as
		$$\Phi(X/e)=N_1(X/e|f)\,\Phi(X/e,f)+N_2(X/f^\c)\,\Phi(X/e\backslash f) \ ,$$
		$$\Phi(X\backslash e)=N_1(X|f)\,\Phi(X\backslash e/f)+N_2(X\backslash e/f^\c)\,\Phi(X\backslash e,f)\ ,$$
		$$\Phi(X/f)=N_1(X/f|e)\,\Phi(X/e,f)+N_2(X/e^\c)\,\Phi(X/f\backslash e) \ ,$$
		$$\Phi(X\backslash f)=N_1(X|e)\,\Phi(X\backslash f/e)+N_2(X\backslash f/e^\c)\,\Phi(X\backslash e,f)\ .$$
		By replacing in our first equation one sees that the terms involving $X/e\backslash f=X\backslash f/e$ and $X\backslash e/f=X/f\backslash e$ cancel. 
		The conclusion follows by rearranging the remaining terms.
		\end{proof}
		
		Of course, we recover the fact that if $N_1$ and $N_2$ are norms then $\Phi$ is well-defined.
		
	\subsection{Norms as exponentials}
	
		Following \cite{KMT} we show that norms arise as exponentials of certain linear maps.
		We will not use this fact in the rest of the article. 
		
		Assume that our coefficient ring $\KK$ contains $\mathbb{Q}$.  For $(\mathsf{C},\Delta,\varepsilon)$ a comonoid in linear species, $(\mathsf{A},\mu,\eta)$ a monoid in linear species and $f:\mathsf{C}\rightarrow \mathsf{A}$ a linear map, 
		we define the exponential of $f$ by the sum
		$$\exp_*(f)=\sum_{n\geq 0}\frac{1}{n!}f^{*n}\ ,$$
		where $f^{*n}$ is the iterated convolution product of $f$ with itself $n$ times and $f^{*0}=\upsilon=\eta\circ\varepsilon$. To make sense of this infinite sum we assume that $f$ vanishes on $\mathsf{C}[\varnothing]$. Then $\exp_*(f):\mathsf{C}\rightarrow \mathsf{A}$ is a well-defined morphism of linear species. If we further assume that $\mathsf{C}$ has the structure of a bimonoid in linear species and that $f$ is an infinitesimal character (i.e.\ satisfies $f\circ\mu=\mu\circ (\varepsilon\cdot f+f\cdot\varepsilon)$) then $\exp_*(f)$ is a morphism of monoids in linear species.
		
		\begin{prop}\label{prop:exponential}
		Assume that $\KK$ contains $\mathbb{Q}$. For a norm $N:\KK\mathsf{S}\rightarrow R$ with values in a commutative $\KK$-algebra $R$, we denote by $\nu:\KK\mathsf{S}\rightarrow R$ the linear map defined for $X\in\mathsf{S}[E]$ by
		$$
		\nu(X)=
		\begin{cases}
		N(X) & \textnormal{ if } |E|=1 ; \\
		0 & \textnormal{ otherwise. }
		\end{cases}
		$$
		Then we have 
		$$\exp_*(\nu)=N\ .$$
		\end{prop}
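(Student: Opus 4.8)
The plan is to evaluate $\exp_*(\nu)$ on a structure $X\in\mathsf{S}[E]$ with $n=|E|$ by expanding the defining series term by term, using that convolution powers are computed via iterated coproducts. First I would record that $\nu$ vanishes on $\mathsf{S}[\varnothing]$ (a singleton has cardinality $1\neq 0$), so the series defining $\exp_*(\nu)$ makes sense as in the discussion preceding the statement, and that the case $n=0$ is trivial: for $X\in\mathsf{S}[\varnothing]$ only the term $\nu^{*0}=\upsilon$ survives, and $\upsilon(X)=1=N(X)$ since a norm takes the value $1$ on $\mathsf{S}[\varnothing]$ by (N2) and (N3).

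For $n\ge 1$, the key computation is the expansion of $\nu^{*k}(X)$: applying the $k$-fold coproduct, then $\nu$ to each tensor factor, then multiplying in $R$, one gets
$$\nu^{*k}(X)=\sum_{E=A_1\sqcup\cdots\sqcup A_k}\ \prod_{j=1}^k \nu\big((X|B_j)/B_{j-1}\big)\ ,$$
the sum ranging over ordered tuples of pairwise disjoint (possibly empty) subsets of $E$ with union $E$, and $B_j=A_1\sqcup\cdots\sqcup A_j$, so that the $j$th factor lives on $A_j$. Because $\nu$ is zero on every structure whose ground set is not a singleton, a summand can contribute only when all $A_j$ are singletons; this forces $k=n$ and matches the surviving tuples bijectively with the $n!$ linear orders $\sigma$ of $E$, the $i$th factor being exactly the singleton minor $X^\sigma_i$, on which $\nu$ coincides with $N$.

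Finally I would invoke the telescoping identity $\prod_{i=1}^n N(X^\sigma_i)=N(X)$, valid for every linear order $\sigma$: it follows from axiom (N1) together with coassociativity (M2) by the same induction used in the proof of Proposition~\ref{prop:pres X(S) first}, now with $N$ in place of the universal norm. Hence each of the $n!$ surviving summands equals $N(X)$, so $\nu^{*n}(X)=n!\,N(X)$ while $\nu^{*k}(X)=0$ for $k\neq n$, and therefore $\exp_*(\nu)(X)=\tfrac{1}{n!}\,\nu^{*n}(X)=N(X)$, using $\KK\supseteq\mathbb{Q}$ to divide by $n!$. The main obstacle is purely bookkeeping: one must be careful that the vanishing of $\nu$ off singletons really kills every term with $k\neq n$, empty blocks included, and that the number $n!$ of surviving terms cancels the prefactor $1/n!$ exactly; once this is in place the rest is a routine unravelling of (N1).
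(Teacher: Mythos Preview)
Your proof is correct and follows essentially the same approach as the paper's: both reduce $\exp_*(\nu)(X)$ to the single term $\tfrac{1}{n!}\nu^{*n}(X)$ by using that $\nu$ vanishes off singletons, identify the surviving summands with the $n!$ linear orders on $E$ yielding the singleton minors $X^\sigma_i$, and then invoke (N1) to get $\prod_i N(X^\sigma_i)=N(X)$. The only difference is cosmetic: the paper dispatches the vanishing for $k\neq n$ with the phrase ``for degree reasons'', while you spell out the general $\nu^{*k}$ expansion explicitly.
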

		
		\begin{proof}
		Since $\nu$ vanishes on $\KK\mathsf{S}[\varnothing]$, its exponential makes sense. For $X\in\mathsf{S}[\varnothing]$ we have $\exp_*(\nu)(X)=\upsilon(X)=1=N(X)$. Let $X\in\mathsf{S}[E]$ with $|E|=n>0$. For degree reasons the only term that survives in $\exp_*(\nu)(X)$ is
		$$\exp_*(\nu)(X)=\frac{1}{n!}\nu^{*n}(X)=\frac{1}{n!} \, \mu^{(n-1)}\circ\nu^{\cdot (n)}\circ \Delta^{(n-1)}(X)\ .$$
		The iterated coproduct $\Delta^{(n-1)}(X)$ is a sum of tensors, some of which contain one element of $\mathsf{S}[\varnothing]$; since $\nu$ vanishes on $\KK\mathsf{S}[\varnothing]$, these terms do not contribute. An easy induction on $n$ shows that the remaining terms are
		$$\overline{\Delta}^{(n-1)}(X) = \sum_{\sigma:\{1,\ldots,n\}\stackrel{\sim}{\rightarrow} E} \left(\bigotimes_{i=1}^n X^\sigma_i\right)\ .$$
		By using axiom (N1) we get, for every $\sigma$,
		$$N(X)=\prod_{i=1}^n N(X^\sigma_i) = \prod_{i=1}^n \nu(X^\sigma_i)\ .$$
		This implies that we have
		$$\exp_*(\nu)(X)=\frac{1}{n!}\sum_{\sigma:\{1,\ldots,n\}\stackrel{\sim}{\rightarrow} E} N(X)=N(X)\ ,$$
		which concludes the proof.
		\end{proof}
		
		We note that if $\mathsf{S}$ is a multiplicative minors system then $\nu$ is an infinitesimal character.\medskip

\section{Matroids and graphs}\label{sec:matroids}

	\subsection{The minors system of matroids}

		We start by recalling basic definitions on matroids in order to set some notation. For background on matroid theory we refer, e.g., to Oxley's textbook \cite{Oxley}.
		
		\begin{defi}\label{def:mat}
		A {\em matroid} is a pair $M=(E,\rk)$, where $E$ is a finite set (the \emph{ground set}) and $\rk : 2^E \to \mathbb N$ is a function (the \emph{rank function}) such that, for all $X,Y\subseteq E$,
	    \begin{itemize}
	    \item[(R1)] $\rk(X)\leq \vert X \vert$,
	    \item[(R2)] $X\subseteq Y$ implies $\rk(X)\leq \rk(Y)$,
	    \item[(R3)] $\rk(X\cup Y) + \rk(X\cap Y) \leq \rk(X) + \rk (Y)$.
	    \end{itemize}
		\end{defi}
	
		We write $(E,\rk)=(E(M),\rk_M)$ when the context is not clear. Matroids form a set species $\mathsf{Mat}$ for which $\mathsf{Mat}[E]$ is the set of matroids with ground set $E$.
		
		\medskip
		
		Given $A\subseteq E$, the \emph{restriction} of $M$ to $A$ is the matroid $M|A$ on the set $A$ whose rank function is the restriction of the rank function of $M$. Equivalently, the \emph{deletion} of $M$ by $A$ is the defined as the restriction to $A^\c=E\setminus A$.	
		The \emph{contraction} of $M$ by a subset $A\subseteq E$ is the matroid $M/A$ on the set $ E \setminus A$, with rank function $\overline\rk$ given by $\overline\rk(B) \doteq \rk(B \cup A) - \rk(A)$ for $B \subseteq  E \setminus A$.
		
		Recall that the \emph{direct sum} of two matroids $M=(E,\rk)$ and $M'=(E',\rk')$ is the matroid $M\oplus M'\doteq (E\sqcup E', \rk\oplus\rk')$, where for $A\subseteq E$ and $A'\subseteq E'$,
		$$(\rk\oplus\rk')(A\sqcup A') = \rk(A)+\rk'(A').$$
		
		\medskip
	
		These notions of restriction, contraction and direct sum endow the set species $\mathsf{Mat}$ with the structure of a multiplicative minors system. It is connected since there is only one matroid with empty ground set, given by $\rk(\varnothing)=0$.
			
		\begin{rem}
		The linearization $\KK\mathsf{Mat}$ is a connected Hopf monoid in linear species whose image by the Fock functor is the classical Hopf algebra of matroids introduced by Schmitt \cite{schmitt}.
		\end{rem}		
		
		The \emph{rank} (resp. \emph{corank}) of a matroid $M$ is defined as $\rk(M)=\rk(E(M))$ (resp. $\cork(M)=|E(M)|-\rk(E(M))$). The \emph{coloop} (resp. \emph{loop}) is the only matroid on a set of cardinality $1$ and rank $1$ (resp. rank $0$) and is denoted by the letter $c$ (resp. $l$). We say that an element $e\in E(M)$ is a coloop in $M$ (resp. a loop in $M$) if $\rk(M\backslash\{e\})=\rk(M)-1$ (resp. $\rk(M/\{e\})=\rk(M)$).
		
		\begin{rem}
		For every integer $n\geq 1$, the set species of matroids $M$ such that $|E(M)|\leq n$ and the set species of matroids $M$ such that $\rk(M)\leq n$ are examples of minors systems that are not multiplicative.
		Further examples may be generated by performing similar ``truncations'' on other minors systems we treat below.
		\end{rem}
		
	\subsection{The universal Tutte character and the Tutte polynomial}
		The computation of the Grothendieck monoid of $\mathsf{Mat}$ is easy.
		The computations in later sections will not be as easy,
		so we work out the present example slowly and pains\-tak\-ingly in order that it can serve as a template for the examples to come.
		
		\begin{prop}\label{prop:UMat}
		Let us denote by $u$ and $v$ the classes in $U(\mathsf{Mat})$ of a coloop and a loop, respectively. We have an isomorphism of monoids 
		$$U(\mathsf{Mat})\simeq u^{\mathbb{N}}v^{\mathbb{N}}$$
		which maps the class of a matroid $M$ to the monomial 
		$$u^{\rk(M)}v^{\cork(M)}\ .$$
		\end{prop}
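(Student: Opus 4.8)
The plan is to identify $U(\mathsf{Mat})$ with the free commutative monoid on two generators by combining the presentation results of Section~\ref{sec:main} with one explicit norm. First note that there are exactly two matroids on a one-element set: the coloop $c$, of rank $1$, and the loop $l$, of rank $0$. Hence $\mathsf{Mat}_1=\{c,l\}$, so by Proposition~\ref{prop:pres X(S) first} the Grothendieck monoid $U(\mathsf{Mat})$ is generated by $u\doteq[c]$ and $v\doteq[l]$ (alternatively this is clear directly from relation (1) of Definition~\ref{def:U}, iterated down to singleton ground sets). By the universal property of the free commutative monoid there is thus a morphism $\pi\colon u^{\mathbb{N}}v^{\mathbb{N}}\rightarrow U(\mathsf{Mat})$ with $\pi(u)=[c]$, $\pi(v)=[l]$, and it is surjective since $[c],[l]$ generate $U(\mathsf{Mat})$. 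It remains to prove that $\pi$ is injective.

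To that end I would define $N\colon\mathsf{Mat}\rightarrow u^{\mathbb{N}}v^{\mathbb{N}}$ by $N(M)=u^{\rk(M)}v^{\cork(M)}$ and check it is a norm. Axiom (N1) amounts to the two additivity identities $\rk(M)=\rk(M|A)+\rk(M/A)$ and $\cork(M)=\cork(M|A)+\cork(M/A)$; both are immediate from $\rk_{M/A}(E\setminus A)=\rk_M(E)-\rk_M(A)$ together with $|E|=|A|+|E\setminus A|$. Axiom (N3) holds because the empty matroid has rank and corank $0$, and axiom (N2) is automatic because $\mathsf{Mat}$ is connected. By the universal property of the universal norm, $N$ factors as $N=\psi\circ[\,\cdot\,]$ for a unique morphism of monoids $\psi\colon U(\mathsf{Mat})\rightarrow u^{\mathbb{N}}v^{\mathbb{N}}$; then $\psi([M])=u^{\rk(M)}v^{\cork(M)}$, and in particular $\psi([c])=u$ and $\psi([l])=v$.

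Now $\psi\circ\pi\colon u^{\mathbb{N}}v^{\mathbb{N}}\rightarrow u^{\mathbb{N}}v^{\mathbb{N}}$ sends $u\mapsto u$ and $v\mapsto v$, so it is the identity; hence $\pi$ is injective, therefore an isomorphism, and $\psi=\pi^{-1}$. This yields the isomorphism $U(\mathsf{Mat})\stackrel{\sim}{\rightarrow}u^{\mathbb{N}}v^{\mathbb{N}}$, $[M]\mapsto u^{\rk(M)}v^{\cork(M)}$, as claimed. As a check on the formula one can also compute $[M]$ directly from Proposition~\ref{prop:pres X(S) first}: for any linear order $\sigma$ on the ground set, the singleton minor $X^\sigma_i$ has rank $\rk_M(\{\sigma(1),\dots,\sigma(i)\})-\rk_M(\{\sigma(1),\dots,\sigma(i-1)\})\in\{0,1\}$, hence is a coloop exactly when the rank increases at step $i$; telescoping, exactly $\rk(M)$ of the $X^\sigma_i$ are coloops and the remaining $\cork(M)$ are loops, so $[M]=u^{\rk(M)}v^{\cork(M)}$ in $U(\mathsf{Mat})$ as well. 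In the spirit of the ``template'' remark one could equally enumerate the five matroids on a two-element set and verify via Theorem~\ref{thm:pres X(S) second} that they impose no relation on $u,v$ beyond commutativity.

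The only point that is not purely formal is upgrading ``$U(\mathsf{Mat})$ is generated by $u,v$'' to ``$U(\mathsf{Mat})$ is \emph{freely} generated by $u,v$''; I expect this to be the main obstacle, and the norm $N$ is precisely the device that overcomes it, by providing a section of $\pi$. Everything else is bookkeeping with the definitions of restriction and contraction and the elementary additivity of rank and corank under minors.
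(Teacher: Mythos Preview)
Your proof is correct and essentially the same as the paper's. The paper leads with Theorem~\ref{thm:pres X(S) second}, enumerating the four isomorphism classes of matroids on a two-element set and observing that each yields a trivial relation, whence $\varphi:u^{\mathbb N}v^{\mathbb N}\to U(\mathsf{Mat})$ is already an isomorphism; it then constructs $\psi([M])=u^{\rk(M)}v^{\cork(M)}$ to exhibit the explicit inverse. You instead get surjectivity of $\pi$ from the generators and use the norm $N$ to produce the section $\psi$, deducing injectivity from $\psi\circ\pi=\mathrm{id}$; you mention the paper's route as an aside. Both arguments build the same two maps and check the same composite, so the difference is only in emphasis.
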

		
		\begin{proof}
		We use the presentation of $U(\mathsf{Mat})$ given by Theorem \ref{thm:pres X(S) second}. The classes $u$ and $v$ generate $U(\mathsf{Mat})$ and the relations that they satisfy come from matroids on two-element sets. There are four such matroids: the direct sums $c\oplus c$, $c\oplus l$, $l\oplus l$, and the uniform matroid $U_{1,2}$, which give rise to the relations $u^2=u^2$, $uv=vu$, $v^2=v^2$, $uv=uv$, respectively. These relations are all trivial, which implies that we have an isomorphism $\varphi:u^{\mathbb{N}}v^{\mathbb{N}}\stackrel{\sim}{\rightarrow}U(\mathsf{Mat})$. For a matroid $M$ and a subset $A\subseteq E(M)$ one easily checks the identities $\rk(M)=\rk(M|A)+\rk(M/A)$ and $\cork(M)=\cork(M|A)+\cork(M/A)$, which imply that the morphism of monoids $\psi:U(\mathsf{Mat})\rightarrow u^{\mathbb{N}}v^{\mathbb{N}}$ defined by $\psi([M])=u^{\rk(M)}v^{\cork(M)}$ is well-defined. Since $\psi\circ\varphi(u)=u$ and $\psi\circ\varphi(v)=v$, the composite $\psi\circ\varphi$ is the identity of $u^{\mathbb{N}}v^{\mathbb{N}}$ and $\psi$ is the inverse of $\varphi$.
		\end{proof}

		\begin{prop}
		The universal Tutte character of the minors system $\mathsf{Mat}$ is the character
		$$T^{\mathsf{Mat}} : \KK\mathsf{Mat} \rightarrow \KK[u_1,v_1,u_2,v_2]$$
		defined, for a matroid $M$, by
		\begin{eqnarray*}
		T^{\mathsf{Mat}} (M) & = & \sum_{A\subseteq E(M)} u_1^{\rk(M|A)} \, v_1^{\cork(M|A)}\, u_2^{\rk(M/A)} \, v_2^{\cork(M/A)} \notag\\
		& = & \sum_{A\subseteq E(M)} u_1^{\rk(A)} \, v_1^{|A|-\rk(A)} \, u_2^{\rk(M)-\rk(A)} \, v_2^{|E(M)|-|A|-\rk(M)+\rk(A)} \ .\notag\\
		\end{eqnarray*}
		\end{prop}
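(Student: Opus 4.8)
The plan is to simply unwind Definition~\ref{def:universal Tutte} in the case $\mathsf{S}=\mathsf{Mat}$, using the explicit description of the Grothendieck monoid from Proposition~\ref{prop:UMat}.

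First I would pin down the target ring. Since $\mathsf{Mat}$ is connected, $\mathsf{Mat}[\varnothing]$ is the one-element monoid, so $\KK[U(\mathsf{Mat})\times\mathsf{Mat}[\varnothing]\times U(\mathsf{Mat})]=\KK[U(\mathsf{Mat})\times U(\mathsf{Mat})]$. By Proposition~\ref{prop:UMat}, $U(\mathsf{Mat})\simeq u^{\mathbb N}v^{\mathbb N}$, whence the product of two copies is the free commutative monoid on four generators; writing $u_1,v_1$ for the coloop and loop classes in the first copy and $u_2,v_2$ for those in the second, the monoid ring is $\KK[u_1,v_1,u_2,v_2]$. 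This identifies the codomain of $T^{\mathsf{Mat}}$ as claimed.

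Next I would apply the defining convolution formula. By Definition~\ref{def:universal Tutte}, $T^{\mathsf{Mat}}=N_1*\tau*N_2$ with $N_1,N_2$ the universal norm into the first resp.\ last factor and $\tau$ the twist map, which here is the trivial map since $\mathsf{Mat}[\varnothing]=\{*\}$. Hence for a matroid $M$ with ground set $E=E(M)$,
$$T^{\mathsf{Mat}}(M)=\sum_{A\subseteq E}N_1(M|A)\,\tau(M|A/A)\,N_2(M/A)=\sum_{A\subseteq E}N_1(M|A)\,N_2(M/A)\ ,$$
using $\tau(M|A/A)=1$ because $M|A/A\in\mathsf{Mat}[\varnothing]$. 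Proposition~\ref{prop:UMat} gives $N_1(M|A)=u_1^{\rk(M|A)}v_1^{\cork(M|A)}$ and $N_2(M/A)=u_2^{\rk(M/A)}v_2^{\cork(M/A)}$, which yields the first displayed line of the statement.

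Finally I would convert to the second line via the standard identities $\rk(M|A)=\rk(A)$, $\cork(M|A)=|A|-\rk(A)$, $\rk(M/A)=\rk(M)-\rk(A)$, and $\cork(M/A)=|E|-|A|-\rk(M)+\rk(A)$, the last following from $|E(M/A)|=|E|-|A|$ together with the formula for $\overline{\rk}$ in the definition of contraction. There is no real obstacle here: the argument is a direct substitution, and the only mild care needed is to check that the grading/identification of the two copies of $U(\mathsf{Mat})$ matches the subscripts $1$ and $2$ on $u,v$ as stated, which is immediate from the definition of $N_1$ and $N_2$ as the universal norm on the respective factors.
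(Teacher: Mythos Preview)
Your proposal is correct and follows exactly the approach the paper intends: the paper's own proof is the single line ``This follows directly from the definition of the universal Tutte character and Proposition~\ref{prop:UMat},'' and your write-up simply makes this explicit by unwinding Definition~\ref{def:universal Tutte} together with the identification $U(\mathsf{Mat})\simeq u^{\mathbb N}v^{\mathbb N}$.
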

	
		\begin{proof}
		This follows directly from the definition of the universal Tutte character and Proposition \ref{prop:UMat}.
		\end{proof}
		
		Let us view $\KK[u_1,v_1,u_2,v_2]$ as a bigraded ring, where $u_1$, $u_2$ have degree $(1,0)$ and $v_1$, $v_2$ have degree $(0,1)$. We remark, as a special case of the discussion of Section \ref{ssec:reduced}, that $T^{\mathsf{Mat}}(M)$ is a bihomogeneous polynomial of degree $(\rk(M),\cork(M))$. Out of the possible reduced Tutte characters, the most popular is the \emph{corank-nullity polynomial}, obtained by setting $u_1$ and $v_2$ to $1$. It is customary to shift the remaining two variables by $1$ to obtain the following classical invariant.
		
		\begin{defi}
		The \emph{Tutte polynomial} of a matroid $M$ is the bivariate polynomial
		$$\mathfrak{T}_M(x,y)=\sum_{A\subseteq E(M)} (x-1)^{\rk(M)-\rk(A)}(y-1)^{|A|-\rk(A)}\ .$$
		\end{defi}
		
		The Tutte polynomial $\mathfrak{T}_M(x,y)$ is obtained from $T^{\mathsf{Mat}}(M)$ by specializing the variables to 
		$$(u_1,v_1,u_2,v_2)=(1,y-1,x-1,1)\ .$$ 
		In other words, it is the Tutte character with values in the polynomial ring $\KK[x,y]$ associated to the norms $N_1(M)=(y-1)^{\cork(M)}$ and $N_2(M)=(x-1)^{\rk(M)}$. Conversely, the universal Tutte character can be recovered from the Tutte polynomial up to a pre-factor: 
		
		\begin{equation}\label{eq:TMat}
		T^{\mathsf{Mat}}(M)=u_1^{\rk(M)}v_2^{\cork(M)}\, \mathfrak{T}_M(1+\tfrac{u_2}{u_1},1+\tfrac{v_1}{v_2}) \ .
		\end{equation}

		Since $\rk(M)$ is the degree in $x$ of the polynomial $\mathfrak{T}_M(x,1)$ and $\cork(M)$ is the degree in $y$ of the polynomial $\mathfrak{T}_M(1,y)$, 
		even the pre-factor can be recovered from $\mathfrak{T}_M(x,y)$ and there really is no loss of information in this specialization.
		
		\begin{prop}
		The universal Tutte character for $\mathsf{Mat}$ satisfies the following deletion-contraction recurrence formula:
		$$T^{\mathsf{Mat}}(M)=u_1^{\rk(M\backslash e^\c)}v_1^{\cork(M\backslash e^\c)} T^{\mathsf{Mat}}(M/e) + u_2^{\rk(M/e^\c)}v_2^{\cork(M/e^\c)} T^{\mathsf{Mat}}(M\backslash e)\ .$$
		In other words, we have 
		$$T^{\mathsf{Mat}}(M) = \begin{cases}
		(u_1+u_2) \,T^{\mathsf{Mat}}(M/e) & \mbox{if } e \mbox{ is a coloop in } M; \\
		(v_1+v_2) \,T^{\mathsf{Mat}}(M\backslash e) &  \mbox{if } e \mbox{ is a loop in } M; \\
		u_1 \,T^{\mathsf{Mat}}(M/ e) + v_2\, T^{\mathsf{Mat}}(M\backslash e) & \mbox{otherwise.}
		\end{cases}$$
		\end{prop}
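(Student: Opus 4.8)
The plan is to obtain the first displayed identity as a direct specialization of the general deletion-contraction recurrence of Proposition~\ref{prop: deletion contraction general}, and then to read off the three-case form by evaluating the prefactors. First recall from Definition~\ref{def:universal Tutte} that $T^{\mathsf{Mat}}$ is the Tutte character $T_{N_1,\tau,N_2}$ of $\mathsf{Mat}$; since $\mathsf{Mat}$ is connected the twist map is $\tau=\upsilon$, and under the isomorphism $U(\mathsf{Mat})\simeq u^{\mathbb{N}}v^{\mathbb{N}}$ of Proposition~\ref{prop:UMat} the two universal norms become $N_1(M)=u_1^{\rk(M)}v_1^{\cork(M)}$ and $N_2(M)=u_2^{\rk(M)}v_2^{\cork(M)}$. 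Substituting these into Proposition~\ref{prop: deletion contraction general} immediately gives
$$T^{\mathsf{Mat}}(M)=N_1(M\backslash e^\c)\,T^{\mathsf{Mat}}(M/e)+N_2(M/e^\c)\,T^{\mathsf{Mat}}(M\backslash e),$$
which is the first displayed formula in the statement.

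The second step is to identify the prefactors. The objects $M\backslash e^\c=M|\{e\}$ and $M/e^\c=M/(E(M)\setminus\{e\})$ are matroids on the singleton $\{e\}$, so each is either a coloop or a loop, and one checks the two local facts that $M|\{e\}$ is a loop exactly when $e$ is a loop in $M$ (both amount to $\rk_M(\{e\})=0$) and that $M/e^\c$ is a coloop exactly when $e$ is a coloop in $M$ (both amount to $\rk_M(E(M))-\rk_M(E(M)\setminus\{e\})=1$). Hence $N_1(M\backslash e^\c)$ is $v_1$ if $e$ is a loop and $u_1$ otherwise, while $N_2(M/e^\c)$ is $u_2$ if $e$ is a coloop and $v_2$ otherwise.

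Finally I would assemble the three cases. When $e$ is a coloop it is not a loop, so the prefactors are $u_1$ and $u_2$; since contracting a coloop coincides with deleting it we have $M/e=M\backslash e$, and the two summands combine to $(u_1+u_2)\,T^{\mathsf{Mat}}(M/e)$. When $e$ is a loop the prefactors are $v_1$ and $v_2$, and again $M/e=M\backslash e$ (deletion and contraction of a loop agree), giving $(v_1+v_2)\,T^{\mathsf{Mat}}(M\backslash e)$. When $e$ is neither, the prefactors are $u_1$ and $v_2$ and there is nothing to combine. I do not expect any genuinely hard step here: the only care needed is the bookkeeping that identifies $M\backslash e^\c$ and $M/e^\c$ as one-element matroids and the standard observation that for a loop, and for a coloop, deletion and contraction coincide; everything else is substitution into the already-established recurrence of Proposition~\ref{prop: deletion contraction general}.
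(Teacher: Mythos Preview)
Your proof is correct and follows essentially the same approach as the paper: apply the general recurrence of Proposition~\ref{prop: deletion contraction general}, then identify the pair $(M\backslash e^\c, M/e^\c)$ as $(c,c)$, $(l,l)$, or $(c,l)$ according to whether $e$ is a coloop, a loop, or neither. You spell out a bit more of the bookkeeping (in particular the fact that $M/e=M\backslash e$ when $e$ is a loop or coloop, which the paper leaves implicit), but the argument is the same.
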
		
		
		\begin{proof}
		The first formula is a direct application of Proposition \ref{prop: deletion contraction general}. The pair $(M\backslash e^\c, M/e^\c)$ is $(c,c)$ if $e$ is a coloop in $M$, $(l,l)$ if $e$ is a loop in $M$, and $(c,l)$ otherwise. This implies the second formula.
		\end{proof}
		
		After specialization we recover the classical deletion-contraction recurrence formula for the Tutte polynomial:
		$$\mathfrak{T}_M(x,y)=(y-1)^{\cork(M\backslash e^\c)}\mathfrak{T}_{M/e}(x,y) + (x-1)^{\rk(M/e^\c)}\mathfrak{T}_{M\backslash e}(x,y)\ ,$$
		or equivalently:
		$$\mathfrak{T}_M(x,y)=\begin{cases} 
		x\, \mathfrak{T}_{M/ e}(x,y) & \textnormal{if } e \textnormal{ is a coloop in } M; \\ 
		y\, \mathfrak{T}_{M\backslash e}(x,y) & \textnormal{if } e \textnormal{ is a loop in } M; \\ 
		\mathfrak{T}_{M/e}(x,y)+\mathfrak{T}_{M\backslash e}(x,y) & \textnormal{otherwise.} \end{cases}$$
		
		\begin{rem}
		For $M$ a matroid with ground set $E$, the \emph{dual matroid} $M^\vee$ is the matroid on $E$ whose rank function is defined by $\rk_{M^\vee}(A)=\rk_M(A^\c) + |A|-\rk_M(E)$. It is a standard fact that we have $(M^\vee)^\vee=M$ and for every $A\subseteq E$, $(M\backslash A)^\vee = M^\vee/A^\c$ and $(M/A)^\vee=M^\vee\backslash A^\c$. Thus, the assignment $M\mapsto M^\vee$ is an isomorphism of minors systems $\mathsf{Mat}\stackrel{\sim}{\rightarrow} \mathsf{Mat}^{\mathrm{op}}$. The corresponding involution of the Grothendieck monoid $U(\mathsf{Mat})$ is given by $u\leftrightarrow v$. In view of the remarks of Section \ref{sec:functoriality duality}, this implies that we have a duality property:
		$$T^{\mathsf{Mat}}(M^\vee)_{(u_1,v_1,u_2,v_2)}=T^{\mathsf{Mat}}(M)_{(v_2,u_2,v_1,u_1)}\ ,$$
		or more classically:
		$$\mathfrak{T}_{M^\vee}(x,y)=\mathfrak{T}_M(y,x)\ .$$		
		\end{rem}
		 
		 \begin{rem}\label{rem:Potts}
		By applying the recipe given in Section \ref{sec:multivariate} one can recover the multivariate version of the Tutte polynomial a.k.a.\ the Potts model partition function \cite{fortuinkasteleyn,traldi,zaslavskystrong,Sokal}, whose preferred specialization is
		
		\begin{equation}\label{eq:multivariateTutte}
		\widetilde{\mathfrak{T}}_M(x,y)=\sum_{A\subseteq E(M)} \left(\prod_{e\in A}\alpha_e\right) (x-1)^{\rk(M)-\rk(A)}(y-1)^{|A|-\rk(A)}\ .
		\end{equation}
		\end{rem}		
		
	\subsection{Convolution formulae}
	
		We apply our general convolution formula from Theorem \ref{thm:general convolution formula} to the case of matroids.
		
		\begin{prop}\label{prop:convolution matroids universal}
		The universal Tutte character for $\mathsf{Mat}$ satisfies a universal convolution formula in the polynomial algebra $\KK[u_0,v_0,u_1,v_1,u_2,v_2]$:
		$$T^{\mathsf{Mat}}(M)_{(-u_0,-v_0,u_2,v_2)}= \sum_{A\subseteq E(M)} T^{\mathsf{Mat}}(M|A)_{(-u_0,-v_0,u_1,v_1)} \,T^{\mathsf{Mat}}(M/A)_{(-u_1,-v_1,u_2,v_2)}\ ,$$
		where the subscripts indicate the specialization of the variables.
		\end{prop}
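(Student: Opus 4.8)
The plan is to obtain this as a direct specialization of the general convolution formula of Theorem~\ref{thm:general convolution formula}. Since $\mathsf{Mat}$ is connected, $\mathsf{Mat}[\varnothing]$ is a one-element set and the only twist map is the unit $\upsilon$; hence the two $\mathsf{S}[\varnothing]$-factors drop out of the universal target ring $\KK[U(\mathsf{Mat})\times\mathsf{Mat}[\varnothing]\times U(\mathsf{Mat})\times\mathsf{Mat}[\varnothing]\times U(\mathsf{Mat})]$, which by Proposition~\ref{prop:UMat} becomes the polynomial algebra $\KK[u_0,v_0,u_1,v_1,u_2,v_2]$, with $(u_i,v_i)$ the coloop/loop classes in the $i$-th copy of $U(\mathsf{Mat})\simeq u^{\mathbb{N}}v^{\mathbb{N}}$.

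Concretely, I would take $N_0,N_1,N_2$ to be the three copies of the universal norm, so that under the isomorphism of Proposition~\ref{prop:UMat} one has $N_i(M)=u_i^{\rk(M)}v_i^{\cork(M)}$, and therefore (using $|E(M)|=\rk(M)+\cork(M)$, or Remark~\ref{rem:inverse U}) $\overline{N_i}(M)=(-u_i)^{\rk(M)}(-v_i)^{\cork(M)}$. With $\tau_1=\tau_2=\upsilon$, for any matroid $M$ one computes
$$T_{\overline{N_i},\upsilon,N_j}(M)=\sum_{B\subseteq E(M)}\overline{N_i}(M|B)\,N_j(M/B)=\sum_{B\subseteq E(M)}(-u_i)^{\rk(M|B)}(-v_i)^{\cork(M|B)}\,u_j^{\rk(M/B)}v_j^{\cork(M/B)},$$
which is exactly $T^{\mathsf{Mat}}(M)$ after the substitution $(u_1,v_1,u_2,v_2)\mapsto(-u_i,-v_i,u_j,v_j)$. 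Applying this identification for $(i,j)\in\{(0,2),(0,1),(1,2)\}$ rewrites the three Tutte characters occurring in Theorem~\ref{thm:general convolution formula} as the three specializations of $T^{\mathsf{Mat}}$ appearing in the claim. Feeding these into the identity $T_{\overline{N_0},\upsilon,N_2}=T_{\overline{N_0},\upsilon,N_1}*T_{\overline{N_1},\upsilon,N_2}$ of Theorem~\ref{thm:general convolution formula} and expanding the convolution as the sum over $A\subseteq E(M)$ coming from $\Delta(M)=\sum_A M|A\otimes M/A$ produces the stated formula.

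The main point requiring care is bookkeeping rather than any genuine difficulty: one must keep the three copies of $U(\mathsf{Mat})$ and their variables straight, and track the signs introduced by passing to the inverse norms $\overline{N_i}$. It is also worth noting that because $U(\mathsf{Mat})$ is free on $u,v$ (Proposition~\ref{prop:UMat}), no relations among the $u_i,v_i$ intervene, so the universal formula genuinely lives in an honest polynomial ring. If one prefers, the identity can instead be verified by hand by substituting the explicit rank--corank expansions on both sides and using $\rk(M)=\rk(M|A)+\rk(M/A)$, $\cork(M)=\cork(M|A)+\cork(M/A)$ together with the vanishing $\sum_{B\subseteq A}(-1)^{|B|}=0$ for $A\neq\varnothing$ (this is exactly the computation behind Proposition~\ref{prop:inverse convolution}), but routing through the abstract convolution formula is cleaner and makes the source of the signs transparent.
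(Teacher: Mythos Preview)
Your proof is correct and follows essentially the same approach as the paper's: both apply Theorem~\ref{thm:general convolution formula} with the three copies of the universal norm and the trivial twist maps, using Remark~\ref{rem:inverse U} (or the equivalent direct computation you give) to identify $\overline{N_i}$ with the substitution $(u_i,v_i)\mapsto(-u_i,-v_i)$. Your write-up is simply more detailed than the paper's one-line proof.
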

		
		\begin{proof}
		This is a direct application of Theorem \ref{thm:general convolution formula}, by noting as in Remark \ref{rem:inverse U} that the inverse $\overline{N}$ of the universal norm $N:\KK\mathsf{Mat}\rightarrow \KK[u,v]$ is obtained by composing $N$ with the automorphism of $\KK[u,v]$ that sends $(u,v)$ to $(-u,-v)$.
		\end{proof} 
		
		This six-variable convolution formula specializes to a four-variable convolution formula for the Tutte polynomial that was proved by Kung \cite[Identity 1]{kungconvolution} in the context of multivariate Tutte polynomials (see also Wang \cite[Theorem 5.3]{wangconvolution}).
		
		\begin{prop}\label{prop:convolution matroids}
		The Tutte polynomial satisfies the following convolution formula in the polynomial algebra $\KK[a,b,c,d]$:
		\begin{equation}\label{eq: general convolution formula tutte matroids}
		\begin{split}
		\mathfrak{T}_M (1-ab,&1-cd)  \\ 
		=\sum_{A\subseteq E(M)} &a^{\rk(M)-\rk(A)}d^{|A|-\rk(A)}  \,\mathfrak{T}_{M|A}(1-a,1-c)\,\mathfrak{T}_{M/A}(1-b,1-d)\ .
		\end{split}
		\end{equation}
		\end{prop}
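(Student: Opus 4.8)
The plan is to obtain~\eqref{eq: general convolution formula tutte matroids} as a specialization of the six-variable universal convolution formula of Proposition~\ref{prop:convolution matroids universal}, read through the dictionary~\eqref{eq:TMat} between $T^{\mathsf{Mat}}$ and the Tutte polynomial. The first task is to pin down the substitution. Feeding~\eqref{eq:TMat} into the three occurrences of $T^{\mathsf{Mat}}$ in Proposition~\ref{prop:convolution matroids universal}, and accounting for the sign changes indicated by the subscripts, the evaluation points of $\mathfrak{T}_M$, $\mathfrak{T}_{M|A}$ and $\mathfrak{T}_{M/A}$ become $\bigl(1-\tfrac{u_2}{u_0},\,1-\tfrac{v_0}{v_2}\bigr)$, $\bigl(1-\tfrac{u_1}{u_0},\,1-\tfrac{v_0}{v_1}\bigr)$ and $\bigl(1-\tfrac{u_2}{u_1},\,1-\tfrac{v_1}{v_2}\bigr)$ respectively. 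Matching these against $(1-ab,1-cd)$, $(1-a,1-c)$ and $(1-b,1-d)$ forces $\tfrac{u_1}{u_0}=a$, $\tfrac{u_2}{u_1}=b$, $\tfrac{v_0}{v_1}=c$, $\tfrac{v_1}{v_2}=d$, and the two sets of constraints on $\mathfrak{T}_M$ are then automatically consistent, since $\tfrac{u_2}{u_0}=ab$ and $\tfrac{v_0}{v_2}=cd$. So I would apply the $\KK$-algebra homomorphism $\KK[u_0,v_0,u_1,v_1,u_2,v_2]\to\KK[a,b,c,d]$ determined by $u_0\mapsto 1$, $u_1\mapsto a$, $u_2\mapsto ab$, $v_2\mapsto 1$, $v_1\mapsto d$, $v_0\mapsto cd$.

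Applying this homomorphism to the identity of Proposition~\ref{prop:convolution matroids universal} and expanding each term with~\eqref{eq:TMat}, the prefactor in~\eqref{eq:TMat} absorbs the sign coming from the first, negated, subscript slot of each $T^{\mathsf{Mat}}$. Thus the left-hand side becomes $(-1)^{\rk(M)}\mathfrak{T}_M(1-ab,1-cd)$, and the generic summand on the right becomes
$$(-1)^{\rk(M|A)}\,d^{\,\cork(M|A)}\,\mathfrak{T}_{M|A}(1-a,1-c)\cdot(-a)^{\rk(M/A)}\,\mathfrak{T}_{M/A}(1-b,1-d)\ .$$
Invoking $\rk(M|A)+\rk(M/A)=\rk(M)$ (from the proof of Proposition~\ref{prop:UMat}) together with $\rk(M|A)=\rk(A)$ and $\cork(M|A)=|A|-\rk(A)$, the sign-and-prefactor part of this summand collapses to $(-1)^{\rk(M)}a^{\rk(M)-\rk(A)}d^{\,|A|-\rk(A)}$. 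Cancelling the common global factor $(-1)^{\rk(M)}$ on both sides of the resulting identity gives exactly~\eqref{eq: general convolution formula tutte matroids}.

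I expect the only genuine difficulty to be bookkeeping: one must place each component of the substitution into the correct variable of the correspondingly reparametrized $T^{\mathsf{Mat}}$, and confirm that the four sign changes coming from the subscripts $-u_0,-v_0,-u_1,-v_1$ combine, together with the prefactors supplied by~\eqref{eq:TMat}, into a single overall $(-1)^{\rk(M)}$ on each side. No division is actually present: after the substitution, each fractional argument of $\mathfrak{T}$ reduces to a genuine polynomial in $a,b,c,d$, so the whole computation lives in $\KK[a,b,c,d]$. Alternatively one could bypass~\eqref{eq:TMat} and evaluate the three specialized copies of $T^{\mathsf{Mat}}$ directly from its explicit monomial expansion, which makes the signs fully transparent at the cost of a slightly longer calculation.
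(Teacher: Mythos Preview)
Your proof is correct and follows essentially the same approach as the paper: specialize Proposition~\ref{prop:convolution matroids universal} and translate via~\eqref{eq:TMat}. The paper's only difference is cosmetic: it chooses the substitution $(u_0,v_0,u_1,v_1,u_2,v_2)=(-1,cd,-a,d,-ab,1)$, which absorbs the signs upfront so that no global factor $(-1)^{\rk(M)}$ ever appears.
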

		
		\begin{proof}
		This is obtained from Proposition \ref{prop:convolution matroids universal} by setting $(u_0,v_0,u_1,v_1,u_2,v_2)=(-1,cd,-a,d,-ab,1)$ and using \eqref{eq:TMat}.
		\end{proof}
		
		One can further specialize to $(a,b,c,d)=(1,1-x,1-y,1)$ and get the classical convolution formula proved by Kook--Reiner--Stanton \cite{KRS} and Etienne--Las Vergnas \cite{etiennelasvergnas}:
		$$\mathfrak{T}_M(x,y)=\sum_{A\subseteq E(M)} \mathfrak{T}_{M|A}(0,y)\,\mathfrak{T}_{M/A}(x,0)\ .$$
		One can also specialize to less classical convolution formulae , e.g.\ with $(a,b,c,d)=(1-x,1,1,1-y)$:
		$$\mathfrak{T}_M(x,y)=\sum_{A\subseteq E(M)} (1-x)^{\rk(M)-\rk(A)}(1-y)^{|A|-\rk(A)}\mathfrak{T}_{M|A}(x,0)\,\mathfrak{T}_{M/A}(0,y)\ ,$$
		or with $(a,b,c,d)=(x-1,-1,-1,y-1)$:
		$$\mathfrak{T}_M(x,y)=\sum_{A\subseteq E(M)} (x-1)^{\rk(M)-\rk(A)}(y-1)^{|A|-\rk(A)}\mathfrak{T}_{M|A}(2-x,2)\,\mathfrak{T}_{M/A}(2,2-y)\ .$$
		Some other specializations of Proposition \ref{prop:convolution matroids universal} appear as sums over flats rather than over all subsets of the ground set\footnote{This was suggested by Spencer Backman.}, for example the following.
		\begin{prop}\label{prop:convolution flats}
		The Tutte polynomial satifies the following identity in $\KK[x,y]$:
		$$\mathfrak{T}_M(x,y) = \sum_{F \textnormal{ flat of } M} (x-1)^{\rk(M)-\rk(F)} \,\mathfrak{T}_{M|F}(1,y)\ .$$
		\end{prop}
		\begin{proof}
        Specializing Proposition \ref{prop:convolution matroids universal} at $(u_0,v_0,u_1,v_1,u_2,v_2) = (-1,1-y,0,1,x-1,1)$ gives the formula:
        $$\mathfrak{T}_M(x,y) = \sum_{F\subseteq E(M)} \mathfrak{T}_{M|F}(1,y)\, T^{\mathsf{Mat}}(M/F)_{(0,-1,x-1,1)}\ .$$
        Now notice that $$T^{\mathsf{Mat}}(M)_{(0,-1,x-1,1)}=(x-1)^{\rk(M)}\sum_{\substack{A\subseteq E(M)\\\rk(A)=0}}(-1)^{|A|}\ ,$$ 
        which equals $0$ if $M$ has at least one loop, and $(x-1)^{\rk(M)}$ otherwise. Thus, the only terms that survive in the sum are those such that $M/F$ has no loop, i.e. such that $F$ is a flat of $M$. 
		\end{proof}
		
		We note the following \emph{iterated convolution formula} for the Tutte polynomial, in the spirit of Remark \ref{rem:multiconvolution}.
		
		\begin{prop}\label{prop:multiconvolution matroids}
		The Tutte polynomial satisfies the following iterated convolution formula in the polynomial algebra $\KK[a_1,\ldots,a_n,b_1,\ldots,b_n]$:
		\begin{equation*}
		\begin{split}
		\mathfrak{T}&_M (1-a_1\cdots a_n,1-b_1\cdots b_n)  \\ 
		=&\sum_{\varnothing=A_0\subseteq A_1\subseteq\cdots\subseteq A_n= E(M)} \left(\prod_{i=1}^n \,a_i^{\rk(M/A_i)}\,b_i^{\cork(M|A_{i-1})}\,\mathfrak{T}_{M|A_i/A_{i-1}}(1-a_i,1-b_i)\right)\ .
		\end{split}
		\end{equation*}
		\end{prop}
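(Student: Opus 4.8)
The plan is to prove the formula by induction on $n$, bootstrapping from the two-variable convolution formula of Proposition~\ref{prop:convolution matroids}. The case $n=1$ is immediate: the only flag is $\varnothing = A_0\subseteq A_1 = E(M)$, the exponents $\rk(M/A_1)$ and $\cork(M|A_0)$ both vanish, and $M|A_1/A_0 = M$, so both sides equal $\mathfrak{T}_M(1-a_1,1-b_1)$. (The case $n=2$ is Proposition~\ref{prop:convolution matroids} with $(a,b,c,d)=(a_1,a_2,b_1,b_2)$, after rewriting $\rk(M)-\rk(A)=\rk(M/A)$ and $|A|-\rk(A)=\cork(M|A)$.)

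For the inductive step, assume the formula for $n-1$ pairs of variables. First I would apply Proposition~\ref{prop:convolution matroids} with the substitution $(a,b,c,d)=(a_1,\,a_2\cdots a_n,\,b_1,\,b_2\cdots b_n)$, so that $1-ab = 1-a_1\cdots a_n$ and $1-cd = 1-b_1\cdots b_n$; this gives
$$\mathfrak{T}_M(1-a_1\cdots a_n,\,1-b_1\cdots b_n) = \sum_{A_1\subseteq E(M)} a_1^{\rk(M/A_1)}\,(b_2\cdots b_n)^{\cork(M|A_1)}\,\mathfrak{T}_{M|A_1}(1-a_1,1-b_1)\,\mathfrak{T}_{M/A_1}(1-a_2\cdots a_n,\,1-b_2\cdots b_n)\ .$$
Then I would expand each factor $\mathfrak{T}_{M/A_1}(1-a_2\cdots a_n,\,1-b_2\cdots b_n)$ by the inductive hypothesis applied to the matroid $M/A_1$, which has ground set $E(M)\setminus A_1$, using the $n-1$ pairs $(a_2,b_2),\dots,(a_n,b_n)$; this yields a sum over flags $\varnothing = B_1\subseteq B_2\subseteq\cdots\subseteq B_n = E(M)\setminus A_1$.

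What remains is to reassemble the double sum into the claimed single sum. Setting $A_j := A_1\cup B_j$ for $1\le j\le n$ and $A_0 := \varnothing$ puts the flags of $M/A_1$ in bijection, as $A_1$ ranges over subsets of $E(M)$, with all flags $\varnothing = A_0\subseteq A_1\subseteq\cdots\subseteq A_n = E(M)$. The coassociativity axiom (M2) for $\mathsf{Mat}$ gives $(M/A_1)/B_j = M/A_j$ and $(M/A_1)|B_j/B_{j-1} = M|A_j/A_{j-1}$, so the Tutte-polynomial factors produced by the inductive hypothesis are exactly $\mathfrak{T}_{M|A_j/A_{j-1}}(1-a_j,1-b_j)$, and likewise $\rk((M/A_1)/B_j) = \rk(M/A_j)$. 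Hence the exponent of each $a_j$ is $\rk(M/A_j)$ (with $a_1$ getting $\rk(M/A_1)$ from the first step and no contribution from the inductive hypothesis); and a short computation with the rank function gives $\cork(M|A_1) + \cork((M/A_1)|B_{j-1}) = \cork(M|A_{j-1})$, so that, combining the factor $(b_2\cdots b_n)^{\cork(M|A_1)}$ from the first step with the inductive hypothesis, the exponent of $b_j$ for $j\ge 2$ is $\cork(M|A_{j-1})$, while $b_1$ has exponent $0 = \cork(M|A_0)$. Assembling these pieces gives exactly the right-hand side.

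The only real difficulty is organizational: one must keep the two reindexings — of the flag and of the variables — consistent, and check that the prefactor exponents telescope correctly; there is no new idea beyond Proposition~\ref{prop:convolution matroids}. As an alternative route avoiding the induction, one can iterate Remark~\ref{rem:multiconvolution} with $n+1$ copies of the universal norm of $\mathsf{Mat}$ and trivial twist maps to obtain an iterated universal convolution formula in $\KK[u^{(0)},v^{(0)},\dots,u^{(n)},v^{(n)}]$, then specialize $u^{(i)}\mapsto a_1\cdots a_i$ and $v^{(i)}\mapsto b_{i+1}\cdots b_n$ (empty products being $1$) and invoke \eqref{eq:TMat} together with Remark~\ref{rem:inverse U}; here the telescoping products $\prod_i a_1\cdots a_{i-1}$ collapse to $a_j^{\rk(M/A_j)}$, the analogous products of $b$'s collapse to $b_j^{\cork(M|A_{j-1})}$, and the intervening signs $(-1)^{\rk(\cdot)}$ cancel between the two sides.
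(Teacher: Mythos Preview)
Your inductive proof is correct. The base case and the inductive step are set up properly; the key identity $\cork(M|A_1)+\cork((M/A_1)|B_{j-1})=\cork(M|A_{j-1})$ follows from the fact that $\cork$ is a norm for $\mathsf{Mat}$ (equivalently from (M2) applied to $M|A_{j-1}$ at the cut $A_1$), and the Tutte factors and $a_j$-exponents are handled correctly via coassociativity.

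The paper, however, does not induct on~$n$. It carries out directly what you describe as your alternative route: it defines norms
\[N_i(M)=(b_{i+1}\cdots b_n)^{\rk(M)}(-a_1\cdots a_i)^{\cork(M)}\qquad(i=0,\ldots,n),\]
verifies that $\overline{N_0}*N_n$ gives $\mathfrak{T}_M(1-a_1\cdots a_n,1-b_1\cdots b_n)$ and that each $\overline{N_{i-1}}*N_i$ gives the $i$\/th factor up to the prefactors $a_i^{\rk(M/A_i)}b_i^{\cork(M|A_{i-1})}$, and then invokes the telescoping identity $\overline{N_0}*N_n=(\overline{N_0}*N_1)*\cdots*(\overline{N_{n-1}}*N_n)$ from Proposition~\ref{prop:inverse convolution}. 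This is shorter and avoids the flag reindexing and exponent bookkeeping; your induction, by contrast, uses only Proposition~\ref{prop:convolution matroids} as a black box and does not require revisiting the convolution-of-norms formalism. Note that the paper's specialization is the transpose of the one you sketch in your alternative (it sends the ``rank'' generator to the product of $b$'s and the ``corank'' generator to a signed product of $a$'s), which has the effect of absorbing the signs into the norms rather than cancelling them at the end.
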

		
		\begin{proof}
		For $i=0,\ldots,n$, let $N_i$ be the norm for the minors system $\mathsf{Mat}$ with values in $\KK[a_1,\ldots,a_n,b_1,\ldots,b_n]$ defined by
		$$N_i(M)=(b_{i+1}\cdots b_n)^{\rk(M)}(-a_1\cdots a_i)^{\cork(M)}\ .$$
		We note that we have $(\overline{N_0}*N_n)(M)=\mathfrak{T}_M(1-a_1\cdots a_n,1-b_1\cdots b_n)$. For $i=1,\ldots,n$ we have
		$$\overline{N_{i-1}}(M)=(-b_i\cdots b_n)^{\rk(M)}(a_1\cdots a_{i-1})^{\cork(M)}$$
		and thus
		$$(\overline{N_{i-1}}*N_i)(M)=(a_{i+1}\cdots a_n)^{\rk(M)}\,(b_1\cdots b_{i-1})^{\cork(M)}\,\mathfrak{T}_M(1-a_i,1-b_i)\ .$$
		According to Proposition \ref{prop:inverse convolution} every $N_i*\overline{N_i}$ is the identity for convolution and we have the formula:
		$$\overline{N_0}*\overline{N}_n = (\overline{N_0}*N_1)*(\overline{N_1}*N_2)*\cdots *(\overline{N}_{n-1}*N_n)$$
		The result then follows after collecting the powers of $a_i$ and $b_i$.
		\end{proof}
		
		The case $n=2$ of the above formula is Proposition \ref{prop:convolution matroids}. A 3-variable specialization of the case $n=3$ already appeared in the work of Reiner \cite[Theorem 3]{reinerinterpretation},
		where $(a_1,a_2,a_3,b_1,b_2,b_3)$ are set to $(1,-\frac{a}{b},1-u,1-v,-\frac{b}{a},1)$, with $a+b=1$. Another notable specialization is the following.
		
		\begin{prop}\label{prop:convolution cyclotomic}
		Assume that $\KK$ contains a primitive $n$-th root of unity $\xi$. Then the Tutte polynomial satisfies the following iterated convolution formula in the polynomial algebra $\KK[x,y]$:
		\begin{equation*}
		\begin{split}
		\mathfrak{T}_M & (x^n,y^n)  =\sum_{\varnothing=A_0\subseteq A_1\subseteq\cdots\subseteq A_n= E(M)}\\ 
		&\left(\prod_{i=1}^n \,(1-\xi^{i-1}x)^{\rk(M/A_i)}\,(1-\xi^{i-1} y)^{\cork(M|A_{i-1})}\,\mathfrak{T}_{M|A_i/A_{i-1}}(\xi^{i-1}x, \xi^{i-1}y)\right)\ .
		\end{split}
		\end{equation*}
		\end{prop}
		
		\begin{proof}
		This follows from applying Proposition \ref{prop:multiconvolution matroids} to $a_i=1-\xi^{i-1} x$ and $b_i=1-\xi^{i-1} y$.
		\end{proof}
		
		The case $n=2$ of Proposition \ref{prop:convolution cyclotomic} is simply:
		$$\mathfrak{T}_M(x^2,y^2)=\sum_{A\subseteq E(M)} (1-x)^{\rk(M)-\rk(A)}(1+y)^{|A|-\rk(A)}\mathfrak{T}_{M|A}(x,y)\,\mathfrak{T}_{M/A}(-x,-y)\ .$$

	\subsection{The minors systems of graphs and its universal Tutte character}\label{ssec:graphs}
	
		For a finite set $E$, let $\mathsf{Gra}[E]$ denote the set of graphs $G$ with edge set $E(G)=E$. This forms a multiplicative minors system $\mathsf{Gra}$ for which restriction and contraction of edges are defined in the usual way and the direct sum of two graphs is their disjoint union.
		Since a graph with zero edge is nothing but a finite number of isolated vertices, 
		one has an isomorphism of monoids $\mathsf{Gra}[\varnothing]\simeq a^{\mathbb{N}}$ which maps the graph with $k$ isolated vertices to the monomial $a^k$. This means that the multiplicative minors system $\mathsf{Gra}$ is not connected, and is thus different from the one appearing in \cite[3.2]{KMT}.
				
		\begin{rem}\label{noHopf}
		In view of Proposition \ref{prop:hopf when}, the linearization $\KK\mathsf{Gra}$ is a bimonoid in linear species but not a Hopf monoid; in other words its image by the Fock functor is a bialgebra but not a Hopf algebra. This is because its degree $0$ component $\KK\mathsf{Gra}_0$ is the polynomial algebra $\KK[a]$ equipped with the coproduct $\Delta(a)=a\otimes a$ and the counit $\varepsilon(a)=1$. This is a sub-bialgebra of  $\KK[a,a^{-1}]\simeq \KK[\mathbb Z]$, the group (Hopf) algebra of the group $\mathbb Z$, whose antipode map is induced by $a\mapsto a^{-1}$. Hence $\KK\mathsf{Gra}_0$ does not have an antipode.
		\end{rem}
		
		For $G$ a graph we denote by $V(G)$ the set of its vertices and by $k(G)$ the number of its connected components, and set $\rk(G)=|V(G)|-k(G)$. For a subset $A\subseteq E(G)$ we set $k(A)\doteq k(G|A)$ and $\rk(A)\doteq \rk(G|A)$. This notion of rank defines a morphism of minors system $\mathsf{Gra}\rightarrow \mathsf{Mat}\; , \; G\mapsto M(G)=(E(G),\rk)$.

		\begin{prop}
		The morphism of minors systems $\mathsf{Gra}\rightarrow\mathsf{Mat}$ induces an isomorphism of monoids
		$$U(\mathsf{Gra}) \stackrel{\sim}{\rightarrow} U(\mathsf{Mat})\simeq u^{\mathbb{N}}v^{\mathbb{N}}$$
		which maps the class of a graph $G$ to the monomial
		$$u^{|V(G)|-k(G)}v^{|E(G)|-|V(G)|+k(G)}\ .$$ 
		\end{prop}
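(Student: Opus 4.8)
The plan is to deduce the isomorphism from the computation of $U(\mathsf{Mat})$ in Proposition~\ref{prop:UMat} together with the quadratic presentation of Theorem~\ref{thm:pres X(S) second}, using functoriality of the Grothendieck monoid rather than redoing everything by hand. Write $\psi\colon U(\mathsf{Gra})\to U(\mathsf{Mat})$ for the morphism of monoids induced by the morphism of minors systems $G\mapsto M(G)$. Postcomposing with the isomorphism $U(\mathsf{Mat})\simeq u^{\mathbb{N}}v^{\mathbb{N}}$ of Proposition~\ref{prop:UMat} sends $[G]$ to $u^{\rk M(G)}v^{\cork M(G)}$, which, since $\rk M(G)=|V(G)|-k(G)$ and $\cork M(G)=|E(G)|-\rk M(G)$, is precisely the asserted monomial $u^{|V(G)|-k(G)}v^{|E(G)|-|V(G)|+k(G)}$. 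So everything reduces to showing that $\psi$ is bijective.

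First I would pin down a two-element generating set for $U(\mathsf{Gra})$. By Theorem~\ref{thm:pres X(S) second} it is generated by the classes $[X]$ with $X\in\mathsf{Gra}_1$, i.e.\ $X$ a graph with exactly one edge. Such an $X$ consists of a single edge --- a link or a loop --- together with finitely many isolated vertices, and relation~(2) of Theorem~\ref{thm:pres X(S) second} (namely $[X\oplus Y]=[X]$ for $Y$ edgeless) allows these isolated vertices to be discarded. Hence every such $[X]$ equals either the class $[\kappa]$ of the graph consisting of a single link (on two vertices) or the class $[\lambda]$ of the graph consisting of a single loop (on one vertex). In particular there is a surjective morphism of monoids $\phi\colon u^{\mathbb{N}}v^{\mathbb{N}}\to U(\mathsf{Gra})$ with $\phi(u)=[\kappa]$ and $\phi(v)=[\lambda]$, well defined because the source is the free commutative monoid on $\{u,v\}$.

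Next I would observe that $\psi$ followed by the isomorphism of Proposition~\ref{prop:UMat} carries $[\kappa]$ to $[M(\kappa)]$, the class of a coloop, which equals $u$, and carries $[\lambda]$ to $[M(\lambda)]$, the class of a loop, which equals $v$. Therefore the composite $u^{\mathbb{N}}v^{\mathbb{N}}\xrightarrow{\phi}U(\mathsf{Gra})\xrightarrow{\psi}U(\mathsf{Mat})\simeq u^{\mathbb{N}}v^{\mathbb{N}}$ is the identity of $u^{\mathbb{N}}v^{\mathbb{N}}$. This forces $\phi$ to be injective, hence an isomorphism, and therefore $\psi$ is an isomorphism as well, with the claimed effect on classes of graphs.

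I do not anticipate a genuine obstacle here: the only steps requiring care are the routine reduction of an arbitrary one-edge graph to $\kappa$ or $\lambda$ via relation~(2), and the elementary fact that the cycle matroid of a single link is a coloop while that of a single loop is a loop. One could instead mimic Proposition~\ref{prop:UMat} literally, enumerating the two-edge graphs (two disjoint links, a path on three vertices, a digon, a link together with a loop, and two loops) and checking that in each case relation~(1) of Theorem~\ref{thm:pres X(S) second} reads $u^2=u^2$, $uv=vu$, or $v^2=v^2$, hence is vacuous; the argument above merely repackages this verification through the functor $\mathsf{Gra}\to\mathsf{Mat}$.
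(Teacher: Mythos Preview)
Your proof is correct and follows essentially the same approach as the paper: define the induced map $\psi=\gamma$ and a candidate inverse $\phi=\delta$ sending $u,v$ to the classes of a bridge and a loop, then use the observation that every one-edge graph is a bridge or loop plus isolated vertices together with the identity $\psi\circ\phi=\mathrm{id}$ to conclude. The only cosmetic difference is that the paper checks $\delta\circ\gamma=\mathrm{id}$ on generators directly, whereas you phrase the same step as ``$\phi$ is surjective and has a left inverse, hence is bijective''.
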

		
		\begin{proof}
		One can prove this proposition by working through the presentation of $U(\mathsf{Gra})$ from Theorem \ref{thm:pres X(S) second}. Alternatively, let us denote by $\gamma:U(\mathsf{Gra})\rightarrow U(\mathsf{Mat})\simeq u^{\mathbb{N}}v^{\mathbb{N}}$ the morphism induced on the level of Grothendieck monoids. Let $\delta:u^{\mathbb{N}}v^{\mathbb{N}}\rightarrow U(\mathsf{Gra})$ be the morphism of monoids that sends $u$ to the class of a coloop (bridge) and $v$ to the class of a loop. Then clearly the composite $\gamma\circ\delta$ is the identity of $u^{\mathbb{N}}v^{\mathbb{N}}$. Since a graph with one edge is either a coloop or a loop together with additional isolated vertices, the composite $\delta\circ\gamma$ is the identity on the generators of $U(\mathsf{Gra})$ and the result follows.
		\end{proof}

		This implies that the universal Tutte character for the minors system $\mathsf{Gra}$ is the character
		$$T^{\mathsf{Gra}} : \KK\mathsf{Gra} \rightarrow \KK[u_1,v_1,a,u_2,v_2]$$
		defined, for a graph $G$, by
		\begin{equation*}
		\begin{split}
		T^{\mathsf{Gra}}(G) & = \sum_{A\subseteq E(G)} u_1^{\rk(G|A)} \, v_1^{\cork(G|A)}\,a^{k(G|A/A)}\, u_2^{\rk(G/A)} \, v_2^{\cork(G/A)} \\
		& = \sum_{A\subseteq E(G)} u_1^{|V(G)|-k(A)}v_1^{|A|-|V(G)|+k(A)}a^{k(A)} u_2^{k(A)-k(G)}v_2^{|E(G)|-|A|+k(G)-k(A)}\ .
		\end{split}
		\end{equation*}
		We mention two notable specializations of this universal invariant.
		\begin{enumerate}[--]
		\item After specialization to $(u_1,v_1,a,u_2,v_2)=(1,y-1,1,x-1,1)$, one recovers the classical Tutte polynomial 
		$$\mathfrak{T}_G(x,y)=\mathfrak{T}_{M(G)}(x,y)\ .$$
		\item After specialization to $(u_1,v_1,a,u_2,v_2)=(1,b,a,1,1)$, one recovers Tutte's dichromatic polynomial 
		$$Q_G(a,b)=\sum_{A\subseteq E(G)} a^{k(A)}b^{|A|-|V(G)|+k(A)}\ ,$$ 
		which is not an invariant of the matroid $M(G)$ \cite{Tuttedichromatic}. This can be computed recursively with the base case $Q_G(a,b)=a^k$ if $G$ consists of $k$ isolated vertices and the deletion-contraction recurrence formula:
		$$Q_G(a,b)=b^{\cork(G\backslash e^\c)}Q_{G/e}(a,b)+Q_{G\backslash e}(a,b)\ ,$$
		or equivalently:
		$$Q_G(a,b)=\begin{cases} (b+1)\, Q_{G\backslash e}(a,b) & \textnormal{if } e \textnormal{ is a loop in } G; \\ Q_{G/e}(a,b)+Q_{G\backslash e}(a,b) & \textnormal{otherwise.} \end{cases}$$
		One can apply the general convolution formula of Theorem \ref{thm:general convolution formula} and get a four-variable convolution formula for the dichromatic polynomial. A specialization of that formula is particularly simple: by applying Theorem \ref{thm:general convolution formula} with the norms $N_0(G)=1$, $N_1(G)=b^{\cork(G)}$, $N_2(G)=1$ and twist maps $\tau_1$ and $\tau_2$ corresponding to variables $a_1$ and $a_2$ we get a convolution formula in the polynomial algebra $\KK[a_1,a_2,b]$:
		$$Q_G(a_1a_2,b)=\sum_{A\subseteq E(G)}Q_{G|A}(a_1,b)\,\chi_{G/A}(a_2)\ ,$$
		where $\chi$ denotes the chromatic polynomial.
		\end{enumerate}
		
\section{Delta-matroids and perspectives}\label{sec:dmp}
	The examples of minors systems which \cite{KMT} focus on 
	are various notions of embedded graphs in surfaces and the matroid-like objects which correspond to them.
	Recently, Moffatt and Smith \cite{MoffattSmith}
	proposed the framework of \emph{delta-matroid perspectives} to unify these structures.
	Delta-matroid perspectives themselves yield as their Tutte invariant the Krushkal polynomial of an embedded graph,
	whereas forgetting various pieces of the structure yields
	the other embedded graph invariants and matroidal objects of~\cite{KMT}.
	We add to the examples appearing in these works 
	the \emph{saturated delta-matroids} of Tardos and Bouchet, which fit cozily in between.  
	The reader lost among the objects being introduced in this section may want to keep at hand
	diagram \eqref{eq:diagram minors systems}, which summarizes their relationships.

	The examples of delta-matroids and of their perspectives are of note because
	our universal Tutte character arrives directly at the correct target ring,
	which unlike our previous examples is not a polynomial ring.
	This is a significant feature which tends to be glossed over in the ``graph polynomials'' literature,
	where monoid rings are implicitly frequent but are handled as
	the subrings of polynomial rings spanned by monomials 
	whose exponents satisfy certain, often unstated, congruences.
	
	\subsection{Delta-matroids and matroid perspectives}
		Delta-matroids, defined by Bouchet \cite{Bouchet}, are a generalization of matroids
		whose most familiar axiom system, the \emph{feasible set} axioms,
		takes off from not the rank axioms of Section~\ref{sec:matroids} but the matroid basis axioms.  
		A basis of a matroid $M$ is a maximal set $X\subseteq E(M)$ with $\rk(X)=|X|$,
		and it is a standard fact that in this case $|X|=\rk(M)$.
		Let $\bigtriangleup$ denote symmetric difference of sets.
		\begin{defi}
		A \emph{delta-matroid} is a pair $D=(E,\mathcal B)$ where $E$ is a finite set
		and $\mathcal B\subseteq {2^E}$ is such that
	    \begin{itemize}
	    \item[($\Delta$1)] $\mathcal B\neq\varnothing$,
	    \item[($\Delta$2)] For any $X,Y\in\mathcal B$ and $x\in X\bigtriangleup Y$,
	    there exists $y\in X\bigtriangleup Y$, possibly equal to~$x$, such that $X\bigtriangleup\{x,y\}\in\mathcal B$.
	    \end{itemize}
		\end{defi}
		
		If the feasible sets of a delta-matroid all have equal cardinality,
		then they form the set of bases of some matroid on the same ground set, and conversely.
		The unfamiliar reader may take this as a statement of the basis axiom system for matroids
		(though one stated with the slight obfuscation of the use of~$\bigtriangleup$).
		Given an arbitrary delta-matroid $D$, 
		its feasible sets of minimum, resp.\ maximum, cardinality
		are the sets of bases of matroids on $E(D)$, 
		which are called the \emph{lower matroid} $D_{\rm min}$ of~$D$,
		resp.\ the \emph{upper matroid} $D_{\rm max}$ of~$D$.
		
		The \emph{deletion} of $M=(E,\mathcal B)$ by a singleton $\{a\}$ such that $a\not\in B$ for some $B\in\mathcal B$ (i.e.\ $a$ is not a \emph{coloop})
		is $M\backslash\{a\} = (E\setminus\{a\}, \{B:B\in\mathcal B,a\not\in B\})$.
		The \emph{contraction} of~$M$ by $\{a\}$, if $a\in B$ for some $B\in\mathcal B$ (i.e.\ $a$ is not a \emph{loop})
		is $M/\{a\} = (E\setminus\{a\}, \{B\setminus\{a\}:B\in\mathcal B,a\in B\})$.
		If $a$ is a coloop or a loop, then we set $M\backslash\{a\}$ and $M/\{a\}$ equal.  (It cannot be both a coloop and a loop.)
		For $A\subseteq E$ arbitrary, $M\backslash A$ and $M/A$ are defined as they must be according to the coassociativity axiom (M2) of Definition~\ref{def:ms}.
		These operations make delta-matroids into a connected minors system $\Delta\mathsf{Mat}$.
		It is multiplicative with the direct sum 
		$(E,\mathcal B)\oplus(E',\mathcal B')=(E\sqcup E',\{B\sqcup B':B\in\mathcal B,B'\in\mathcal B'\})$.
		\medskip

		Matroid perspectives, due to Las Vergnas in~\cite{lasvergnasextensions},
		are also known as \emph{strong maps} of matroids whose underlying ground set map is $\mathrm{id}:E\to E$.

		\begin{defi}
		A \emph{matroid perspective} is a pair $(M,M')$ of matroids on the same ground set $E=E(M)=E(M')$ such that for subsets $A\subseteq B\subseteq E$ we have the inequality
		$$\rk_M(B)-\rk_M(A)\geq \rk_{M'}(B)-\rk_{M'}(A)\ .$$
		\end{defi}
		
		By defining restriction, contraction and direct sum componentwise, one makes matroid perspectives into a connected multiplicative minors system $\mathsf{MatPer}$. It is naturally a minors subsystem of the product $\mathsf{Mat}\times\mathsf{Mat}$.\medskip

		We now explain how $\mathsf{MatPer}$ also forms a minors subsystem of the minors system $\mathsf{\Delta Mat}$ of delta-matroids.
		
		\begin{defi}
		A delta-matroid $D$ with ground set $E$ is said to be \emph{saturated} if for subsets $X\subseteq Y\subseteq Z$ of $E$, if $X$ and $Z$ are feasible then $Y$ is feasible.
		\end{defi}
		
		Saturated delta-matroids were introduced by Tardos \cite{tardosgeneralized} under the name \emph{generalized matroids} or \emph{g-matroids}, see also \cite{Bouchet,Bouchet-maps}. 
		They form a minors subsystem $\mathsf{Sat\Delta Mat}\subseteq \mathsf{\Delta Mat}$ of the minors system of delta-matroids.
		
		All delta-matroids on a one-element set are saturated. There are three non-saturated delta-matroids on a two-element set $\{e,f\}$, whose collections of feasible sets are
		$\{\varnothing,\{e,f\}\}$, $\{\varnothing,\{e\},\{e,f\}\}$ and $\{\varnothing,\{f\},\{e,f\}\}$.
		
		Saturated delta-matroids arise naturally from matroid perspectives. The next proposition is due to Tardos \cite{tardosgeneralized}.
		
		\begin{prop}\label{prop:tardos}
		Let $(M,M')$ be a matroid perspective with ground set $E$. Let us say that a subset $X\subseteq E$ is feasible if it is an independent subset for $M$ and a spanning subset for $M'$. Then this defines a saturated delta-matroid on $E$.
		\end{prop}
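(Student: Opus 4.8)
The plan is to verify the delta-matroid axioms $(\Delta 1)$ and $(\Delta 2)$ for the family $\mathcal B$ of feasible sets (those independent in $M$ and spanning in $M'$), and then to observe that saturation comes essentially for free. Throughout I would lean on two elementary facts about a matroid perspective $(M,M')$: that $(M/A,M'/A)$ is again a matroid perspective for every subset $A$ of the ground set (apply the defining rank inequality to pairs $A\cup A''\subseteq A\cup B$), and that every non-loop of $M'$ is a non-loop of $M$ (the rank inequality for the pair $\varnothing\subseteq\{e\}$ gives $\rk_M(\{e\})\geq\rk_{M'}(\{e\})$). For $(\Delta 1)$ I would take a basis $B$ of $M$: applying the rank inequality to $B\subseteq E$ gives $0=\rk_M(E)-\rk_M(B)\geq\rk_{M'}(E)-\rk_{M'}(B)$, so $B$ spans $M'$; being independent in $M$ it is feasible, whence $\mathcal B\neq\varnothing$.

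For $(\Delta 2)$, fix feasible sets $X,Y$ and an element $x\in X\bigtriangleup Y$. The principal case is $x\in X\setminus Y$. If $X\setminus\{x\}$ still spans $M'$, then it is feasible and $y=x$ works. Otherwise $\rk_{M'}(X\setminus\{x\})=\rk(M')-1$, and I would put $A=X\setminus\{x\}$ and pass to the perspective $(M/A,M'/A)$, in which the $M'$-side has rank one. Since $x\notin Y$ we have $Y\cap X\subseteq A$, so $A\cup(Y\setminus X)\supseteq Y$ spans $M'$; hence $Y\setminus X$ spans $M'/A$, and in particular some $y_0\in Y\setminus X$ is a non-loop of $M'/A$, and therefore a non-loop of $M/A$. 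Then $A\cup\{y_0\}=X\bigtriangleup\{x,y_0\}$ is independent in $M$ and satisfies $\rk_{M'}(A\cup\{y_0\})=\rk_{M'}(A)+1=\rk(M')$, so it is feasible, and $y_0\in X\bigtriangleup Y$, as the axiom demands.

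The remaining case $x\in Y\setminus X$ I would reduce to the first one by duality instead of arguing directly. The dual of a strong map is a strong map, so $((M')^{\vee},M^{\vee})$ is again a matroid perspective, and a subset $Z$ of $E$ is feasible for it exactly when $E\setminus Z$ is feasible for $(M,M')$ (a set is independent in $(M')^{\vee}$ iff its complement spans $M'$, and spans $M^{\vee}$ iff its complement is independent in $M$). Since complementation carries $\mathcal B$ onto the feasible sets of $((M')^{\vee},M^{\vee})$, preserves the symmetric exchange axiom, and turns the condition $x\in Y\setminus X$ into $x\in(E\setminus X)\setminus(E\setminus Y)$, this case is exactly the previous one for the dual perspective. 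Together these two cases establish $(\Delta 2)$, so $\mathcal B$ is a delta-matroid. Saturation is then immediate: if $X\subseteq Y\subseteq Z$ with $X$ and $Z$ feasible, then $Y\supseteq X$ spans $M'$ while $Y\subseteq Z$ is independent in $M$, so $Y$ is feasible.

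I expect the one delicate point to be the contraction step in the first case of $(\Delta 2)$: one must check carefully that $\rk(M'/A)=1$, that $Y\setminus X$ genuinely spans $M'/A$ (this is where $Y\cap X\subseteq A$ is used), and that a non-loop $y_0$ of $M'/A$ simultaneously keeps $A\cup\{y_0\}$ independent in $M$ and makes it span $M'$; once that case is in hand, the reduction of the second case via the dual perspective should be routine.
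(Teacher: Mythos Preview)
Your argument is correct and tracks the paper's proof closely. The treatment of $(\Delta 1)$ and saturation is identical, and your handling of the case $x\in X\setminus Y$ is the same argument as the paper's case~(i), only rephrased in terms of the contracted perspective $(M/A,M'/A)$ rather than applying the rank inequality directly to $X\setminus\{x\}\subseteq X\setminus\{x\}\cup\{y\}$. The one genuine difference is in the case $x\in Y\setminus X$: the paper runs the dual argument by hand (swap the roles of ``independent in $M$'' and ``spanning in $M'$''), while you invoke matroid duality to reduce to the first case via the perspective $((M')^\vee,M^\vee)$. Both are equally short; your reduction buys a cleaner symmetry at the cost of appealing to the standard fact that duals of strong maps are strong maps, whereas the paper's direct argument is entirely self-contained.
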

		
		\begin{proof}
 		Let us first remark that an independent set for $M'$ is independent for $M$ and that a spanning set for $M$ is spanning for $M'$. This implies that a basis $B$ of $M$ (resp. a basis $B'$ of $M'$) is feasible and that the set of feasible sets is not empty. 
 		
 		Next we have to prove the symmetric basis exchange axiom. There are two (dual) cases to consider.
 		\begin{enumerate}[(i)]
 		\item Let $X, Y$ be feasible sets and choose an element $x\in X\setminus Y$. If $X\setminus \{x\}$ is a spanning set for $M'$ then $X\setminus \{x\}=X\bigtriangleup \{x,x\}$ is feasible and we are done. Assume now that $X\setminus \{x\}$ is not a spanning set for $M'$. Then since $Y$ is a spanning set for $M'$ there exists an element $y\in Y\setminus X$ such that $X\bigtriangleup \{x,y\}=X\setminus\{x\}\cup\{y\}$ is a spanning set for $M'$. This implies that we have $\rk_{M'}(X\setminus\{x\}\cup\{y\})-\rk_{M'}(X\setminus \{x\})=1$. Since $(M,M')$ is a matroid perspective we get that $\rk_M(X\setminus\{x\}\cup\{y\})-\rk_M(X\setminus\{x\})=1$. Since $X\setminus\{x\}$ is an independent set of $M$, this implies that $X\setminus\{x\}\cup\{y\}$ is also an independent set of $M$, and is thus a feasible set.
 		\item Let $X, Y$ be feasible sets and choose an element $y\in Y\setminus X$. If $X\cup\{y\}$ is an independent set of $M$ then $X\cup\{y\}=X\bigtriangleup\{y,y\}$ is feasible and we are done. Assume now that $X\cup\{y\}$ is not an independent set of $M$. Then since $X$ is an independent set of $M$ there exists an element $x\in X\setminus Y$ such that $X\bigtriangleup\{y,x\}=X\cup\{y\}\setminus\{x\}$ is an independent set of $M$. This implies that $\rk_M(X\cup\{y\})-\rk_M(X\cup\{y\}\setminus \{x\})=0$. Since $(M,M')$ is a matroid perspective we get that $\rk_{M'}(X\cup\{y\})-\rk_{M'}(X\cup\{y\}\setminus \{x\})=0$. Since $X\cup\{y\}$ is a spanning set of $M'$ this implies that $X\cup\{y\}\setminus\{x\}$ is also a spanning set of $M'$, and is thus a feasible set.
 		\end{enumerate}
 		Finally, for $X\subseteq Y\subseteq Z$ subsets of $E$, if $X$ is spanning for $M'$ and $Z$ is independent for $M$, then $Y$ is spanning for $M'$ and independent for $M$, i.e.\ is a feasible set. We have thus proved that the delta-matroid that we have just defined is saturated. 
		\end{proof}
		
		Let us denote by $D(M,M')$ the saturated delta-matroid defined in the above proposition.
		
		\begin{prop}\label{prop:iso matper satdeltamat}
		The assignments $(M,M')\mapsto D(M,M')$ and $D\mapsto (D_{\max},D_{\min})$ induce an isomorphism of minors systems:
		$$\mathsf{MatPer} \simeq \mathsf{Sat\Delta Mat}\ .$$
		\end{prop}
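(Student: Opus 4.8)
The plan is to prove that $(M,M')\mapsto D(M,M')$ is a morphism of minors systems which is bijective on each ground set $E$; its set-theoretic inverse is then automatically a morphism of minors systems, and since that inverse sends a saturated delta-matroid $D$ to $(D_{\max},D_{\min})$, the two assignments of the statement are mutually inverse isomorphisms. By Proposition~\ref{prop:tardos}, $(M,M')\mapsto D(M,M')$ takes values in $\mathsf{Sat\Delta Mat}$, so the map is at least well defined.

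To establish bijectivity on a fixed $E$, I would first recover $(M,M')$ from $D=D(M,M')$. The proof of Proposition~\ref{prop:tardos} notes that a set independent in $M'$ is independent in $M$, and a set spanning in $M$ is spanning in $M'$; hence every basis of $M'$ is a feasible set of $D$, and since every feasible set of $D$ is spanning in $M'$ and so has size at least $\rk(M')$, the minimum feasible sets of $D$ are exactly the bases of $M'$. Dually the maximum feasible sets of $D$ are exactly the bases of $M$. Thus $D_{\min}=M'$ and $D_{\max}=M$: the map is injective and $D\mapsto(D_{\max},D_{\min})$ is a left inverse. For surjectivity I must show, for a saturated delta-matroid $D$, that $(D_{\max},D_{\min})$ is a matroid perspective and that $D=D(D_{\max},D_{\min})$, i.e.\ that $X\subseteq E$ is feasible iff $X$ is independent in $D_{\max}$ and spanning in $D_{\min}$. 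The ``if'' direction is immediate from saturation, since such an $X$ contains a basis of $D_{\min}$ and is contained in a basis of $D_{\max}$, both feasible, so $X$ is sandwiched between two feasible sets. The ``only if'' direction reduces to the \emph{sandwiching lemma}: in a saturated delta-matroid, every feasible set is contained in some maximum feasible set and contains some minimum feasible set.

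I would prove the first half of the sandwiching lemma by an extremal argument: given a feasible set $X$, choose a maximum feasible set $G$ with $|G\cap X|$ as large as possible; if $X\not\subseteq G$, pick $x\in X\setminus G$ and apply axiom~($\Delta$2) to the pair $(G,X)$ and the element $x\in G\bigtriangleup X$, obtaining $y\in G\bigtriangleup X$ with $G\bigtriangleup\{x,y\}$ feasible. The cases $y=x$ and $y\in X\setminus G$ produce a feasible set strictly larger than $G$, contradicting maximality; the case $y\in G\setminus X$ produces another maximum feasible set $(G\setminus\{y\})\cup\{x\}$ meeting $X$ in one more element, contradicting the choice of $G$. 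Hence $X\subseteq G$. The second half follows by applying this to the dual delta-matroid $(E,\{E\setminus B:B\in\mathcal B\})$, which is again saturated (the complement operation $B\mapsto E\setminus B$ preserves saturation) and interchanges minimum and maximum feasible sets. With the lemma available, being independent in $D_{\max}$ coincides with being a subset of a feasible set, and being spanning in $D_{\min}$ with being a superset of a feasible set; from this the ``only if'' direction above is immediate, and so is the defining inequality of the matroid perspective $(D_{\max},D_{\min})$, which equivalently need only be verified in the form $\mathrm{cl}_{D_{\max}}(A)\subseteq\mathrm{cl}_{D_{\min}}(A)$ for all $A\subseteq E$, the general inequality following by telescoping along a maximal chain from $A$ to $B$. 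This last closure containment is also a consequence of Tardos's theorem~\cite{tardosgeneralized}. I expect the verification of the matroid perspective inequality --- finding the correct extremal argument, parallel to the one in the sandwiching lemma --- to be the main obstacle.

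Finally I would check that $(M,M')\mapsto D(M,M')$ respects the minors-system structure. Compatibility with relabeling bijections and with the direct sum is immediate, because the feasible sets of $D\big((M_1,M_1')\oplus(M_2,M_2')\big)$ are precisely the concatenations $B_1\sqcup B_2$ with $B_i$ feasible in $D(M_i,M_i')$, matching the direct sum of delta-matroids. By coassociativity (axiom~(M2) of Definition~\ref{def:ms}), compatibility with restriction and contraction reduces to single elements, where it is a short case analysis on whether the element is a loop or a coloop; the delta-matroid convention that deletion equals contraction at a loop or a coloop matches the componentwise operations on perspectives because, by the perspective inequality, a coloop of $M'$ is a coloop of $M$ and a loop of $M$ is a loop of $M'$. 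This makes $(M,M')\mapsto D(M,M')$ a bijective morphism of minors systems, hence an isomorphism $\mathsf{MatPer}\stackrel{\sim}{\rightarrow}\mathsf{Sat\Delta Mat}$ with inverse $D\mapsto(D_{\max},D_{\min})$, as claimed.
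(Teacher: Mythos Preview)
Your proposal is correct and covers all the necessary ingredients, but its emphasis is almost opposite to the paper's. The paper takes the bijectivity $\mathsf{MatPer}[E]\simeq\mathsf{Sat\Delta Mat}[E]$ as ``clear'' (this is Tardos's result) and devotes its entire proof to the explicit verification that $D(M,M')\backslash\{a\}=D(M\backslash\{a\},M'\backslash\{a\})$ and $D(M,M')/\{a\}=D(M/\{a\},M'/\{a\})$, by case analysis on whether $a$ is a coloop of $M'$ (resp.\ a loop of $M$). You, conversely, supply a self-contained proof of bijectivity via your sandwiching lemma---which is a clean argument---and then leave the minor compatibility as a ``short case analysis,'' which is precisely the part the paper writes out.

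One point to tidy: you both assert that the matroid-perspective inequality for $(D_{\max},D_{\min})$ ``follows'' from your characterizations and then call it ``the main obstacle.'' It does not follow immediately from the sandwiching lemma alone; it is a theorem of Bouchet \cite{Bouchet-maps} (equivalently in Tardos's framework). Since you already cite Tardos, this is not a true gap, but you should either invoke the result cleanly or carry out the promised extremal argument rather than leaving the tension unresolved. With that fixed, your argument is a genuinely more self-contained alternative to the paper's, at the cost of reproving a known bijection.
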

		
		\begin{proof}
		The fact that we get functorial isomorphisms $\mathsf{MatPer}[E]\simeq \mathsf{Sat\Delta Mat}[E]$ is clear and one simply has to prove that they are compatible with restriction and contraction. Let us prove that $D(M,M')\backslash\{a\}=D(M\backslash\{a\},M'\backslash\{a\})$ for a matroid perspective $(M,M')$ with ground set $E$ and $a\in E$. If $a$ is a coloop in $M'$ (and thus also in $M$) then $a$ is a coloop in $D(M,M')$ and the feasible sets of $D(M\backslash \{a\},M'\backslash\{a\})$ are exactly the sets $X\setminus\{a\}$ for $X$ feasible in $D(M,M')$. Otherwise the feasible sets of $D(M\backslash\{a\},M\backslash\{a\})$ are those sets $X\subseteq E\setminus\{a\}$ that are spanning sets for $M'\backslash\{a\}$ and independent sets for $M\backslash \{a\}$, i.e.\ the feasible sets in $D(M,M')$ that do not contain $a$. Let us now prove that $D(M,M')/\{a\}=D(M/\{a\},M'/\{a\})$. If $a$ is a loop in $M$ (and thus also in $M'$) then $a$ is a loop in $D(M,M')$ and the feasible sets of $D(M/\{a\},M'/\{a\})$ are exactly the feasible sets in $D(M,M')$. Otherwise the feasible sets of $D(M/\{a\},M'/\{a\})$ are those sets $X\subseteq E\setminus\{a\}$ that are spanning sets for $M'/\{a\}$ and independent sets for $M/\{a\}$, i.e.\ the sets $X=Y\setminus\{a\}$ for $Y$ a feasible set of $D(M,M')$ containing $a$. 
		\end{proof}
		
		\begin{rem}\label{rem:delta non-morphisms}
		The assigment $D\mapsto (D_{\max},D_{\min})$ does \emph{not} induce a morphism of minors systems from the full minors system of delta-matroids $\mathsf{\Delta Mat}$ to $\mathsf{MatPer}$. For example, if $E(D)=\{e,f\}$ and $D$ has feasible sets $\{\varnothing,\{e\},\{e,f\}\}$,
		then the feasible sets of $D_{\rm min}/f$ are~$\{\varnothing\}$ 
		while those of $(D/f)_{\rm min}$ are~$\{\{e\}\}$.
		\end{rem}
		
		\begin{rem}
		By examining the proof of Proposition \ref{prop:tardos} one sees that saturated delta-matroids are those delta-matroids that satisfy the following strong form of the symmetric exchange axiom \cite{tardosgeneralized}. For feasible sets $X, Y$ one has:
		\begin{enumerate}[(i)]
		\item for every $x\in X\setminus Y$, either $X\setminus\{x\}$ is feasible or there exists $y\in Y\setminus X$ such that $X\setminus\{x\}\cup\{y\}$ is feasible;
		\item for every $y\in Y\setminus X$, either $X\cup\{y\}$ is feasible or there exists $x\in X\setminus Y$ such that $X\cup\{y\}\setminus\{x\}$ is feasible.
		\end{enumerate}
		\end{rem}
		
	\subsection{Delta-matroid perpectives}
		
		For delta-matroid perspectives our definitions follow \cite{MoffattSmith}.
		
		\begin{defi}
		A \emph{delta-matroid perspective} is a triple $(M,D,M')$ of two matroids $M$ and~$M'$
		and a delta-matroid $D$, all on the same ground set $E$,
		such that $(M,D_{\rm max})$ and $(D_{\rm min},M')$ are matroid perspectives.
		\end{defi}
		
		By Proposition~32 of \cite{MoffattSmith}, delta-matroid perspectives become a connected multiplicative minors systems $\mathsf{\Delta MatPer}$ with restriction, contraction, and direct sum operations defined componentwise. It is naturally a minors subsystem of $\mathsf{\Delta Mat}\times \mathsf{MatPer} \subseteq \mathsf{Mat}\times \mathsf{\Delta Mat}\times\mathsf{Mat}$.

		In what follows, we will compute our universal Tutte polynomial
		for~$\Delta\mathsf{MatPer}$,
		and then specialize it to $\mathsf{MatPer}$ and~$\Delta\mathsf{Mat}$.
		The specialization is based on parts (2), (3) and (4) of the following proposition,
		part (1) of which similarly recovers the results of Section~\ref{sec:matroids}.
		\begin{prop}\label{prop:dmp specialization}\mbox{}
		\begin{enumerate}
		\item[(1)] $M$ is a matroid if and only if $(M,M,M)$ is a delta-matroid perspective.
		\item[(2)] The following are equivalent: 
			$(M,M')$ is a matroid perspective;
			$(M,M,M')$ is a delta-matroid perspective;
			$(M,M',M')$ is a delta-matroid perspective;
			$(M,D(M,M'),M')$ is a delta-matroid perspective.
		\item[(3)] If $(M,D,M')$ is a delta-matroid perspective then $(M,M')$ is a matroid perspective.
		\item[(4)] $D$ is a delta-matroid if and only if $(D_{\rm max},D,D_{\rm min})$ is a delta-matroid perspective.
		\end{enumerate}
		\end{prop}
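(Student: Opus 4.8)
The plan is to settle all four assertions by unwinding the definition of a delta-matroid perspective, the single genuinely nontrivial ingredient being the classical fact that the lower and upper matroids of \emph{any} delta-matroid $D$ form a matroid perspective $(D_{\max},D_{\min})$. Parts (1) and (4) are purely formal. For~(4): when $D$ is a delta-matroid, $D_{\max}$ and $D_{\min}$ are matroids, and the two requirements on the triple $(D_{\max},D,D_{\min})$, namely that $(D_{\max},D_{\max})$ and $(D_{\min},D_{\min})$ be matroid perspectives, hold because the defining inequality of a matroid perspective is an equality when the two matroids agree; conversely, if $(D_{\max},D,D_{\min})$ is a delta-matroid perspective then its middle entry $D$ is a delta-matroid by definition. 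Part~(1) is the analogous, even simpler, statement for a matroid $M$ regarded as the delta-matroid whose feasible sets are its bases: then $M_{\max}=M_{\min}=M$, the two perspective conditions both become the trivial $(M,M)$, and the converse holds since the outer entries of a delta-matroid perspective are matroids by definition.

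The crux is part~(3), and this is where I expect the work to lie. If $(M,D,M')$ is a delta-matroid perspective then $(M,D_{\max})$ and $(D_{\min},M')$ are matroid perspectives, so for all $A\subseteq B\subseteq E$ we have $\rk_M(B)-\rk_M(A)\geq\rk_{D_{\max}}(B)-\rk_{D_{\max}}(A)$ and $\rk_{D_{\min}}(B)-\rk_{D_{\min}}(A)\geq\rk_{M'}(B)-\rk_{M'}(A)$. To deduce that $(M,M')$ is a matroid perspective it therefore suffices to interpolate the inequality $\rk_{D_{\max}}(B)-\rk_{D_{\max}}(A)\geq\rk_{D_{\min}}(B)-\rk_{D_{\min}}(A)$, that is, to know that $(D_{\max},D_{\min})$ is itself a matroid perspective. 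This holds for every delta-matroid and is standard; it is already implicit in the delta-matroid perspective formalism of \cite{MoffattSmith}, and if a self-contained proof is wanted it can be read off from the symmetric exchange axiom, for instance by checking that every circuit of $D_{\max}$ is a union of circuits of $D_{\min}$. Granting it, part~(3) follows at once from transitivity of the matroid-perspective inequality, and this delta-matroid fact is the only point in the whole proposition requiring more than formal manipulation.

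It remains to prove the chain of equivalences in part~(2). I would show that the matroid-perspective condition on $(M,M')$ implies that each of $(M,M,M')$, $(M,M',M')$ and $(M,D(M,M'),M')$ is a delta-matroid perspective, and conversely that each of these implies the matroid-perspective condition. Every such converse is an immediate application of part~(3) to the triple in question; for the first two it is in fact more transparent still, since --- using that the upper and lower matroids of a matroid coincide with it --- unwinding the definition directly displays $(M,M')$ as one of the two matroid perspectives required of the triple. For the forward direction, assume $(M,M')$ is a matroid perspective. Regarding $M$, respectively $M'$, as a delta-matroid turns $(M,M,M')$ and $(M,M',M')$ into delta-matroid perspectives, since one of the two perspective conditions is the hypothesis and the other is the trivial $(M,M)$ or $(M',M')$. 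And by Proposition~\ref{prop:tardos} the set system $D(M,M')$ (which in any case is defined, as the set of subsets that are independent in $M$ and spanning in $M'$) is a saturated delta-matroid, with $D(M,M')_{\max}=M$ and $D(M,M')_{\min}=M'$: a basis of $M$ is independent in $M$ and, by the perspective inequality, spanning in $M'$, hence a feasible set of the extreme cardinality $\rk(M)$, and dually a basis of $M'$ is a feasible set of cardinality $\rk(M')$; this is also recorded by the mutually inverse assignments of Proposition~\ref{prop:iso matper satdeltamat}. Hence the triple $(M,D(M,M'),M')$ meets both perspective conditions, which read $(M,M)$ and $(M',M')$, and is a delta-matroid perspective.
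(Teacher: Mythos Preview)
Your proof is correct and follows essentially the same approach as the paper. The paper defers most of the argument to \cite{MoffattSmith}, but its accompanying remark makes explicit that the only nontrivial ingredient is the fact (due to Bouchet) that $(D_{\max},D_{\min})$ is a matroid perspective for any delta-matroid $D$, combined with transitivity of the perspective relation --- exactly what you identify as the crux and use for part~(3); your handling of (1), (4), and the equivalences in (2), including the appeal to $D(M,M')_{\max}=M$, $D(M,M')_{\min}=M'$ via Proposition~\ref{prop:iso matper satdeltamat}, matches the paper's reasoning as well.
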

		\begin{proof}
		With the exception of the last equivalent condition of~(2), 
		this is proved as Proposition~4 of \cite{MoffattSmith}.
		Sufficiency of this equivalent condition is assured by~(3).
		Necessity follows from the definition of delta-matroid perspective in view of Proposition~\ref{prop:iso matper satdeltamat},
		which says that $D(M,M')_{\rm max}=M$ and $D(M,M')_{\rm min}=M'$.
		\end{proof}
		\begin{rem}
		The only nontrivial fact called on in the proof in~\cite{MoffattSmith} is that if $D$ is a delta-matroid
		then $(D_{\rm max},D_{\rm min})$ is a matroid perspective \cite{Bouchet-maps}.
		This, together with transitivity of the relation of being a matroid perspective, for instance proves~(3).		\end{rem}
		
		\medskip
		
		The following diagram, where all arrows are morphisms of minors systems, summarizes the links between the minors systems $\mathsf{Mat}$, $\mathsf{MatPer}$, $\mathsf{Sat\Delta Mat}$, $\mathsf{\Delta Mat}$ and $\mathsf{\Delta MatPer}$ explained in Proposition \ref{prop:dmp specialization}.\medskip
		\vspace{.6cm}
		\begin{equation}\label{eq:diagram minors systems}
		\xymatrixcolsep{5pc}\xymatrix{
		\mathsf{Mat} \ar@{^{(}->}[r] &\mathsf{MatPer} \simeq \mathsf{Sat\Delta Mat} \ar@{^(->}[d]\ar@{-->}@/_1.5pc/[l]\ar@{-->}@/^1.5pc/[l] \ar@{^{(}->}[r]& \Delta\mathsf{MatPer} \ar@{-->}@/_1.5pc/[l] \ar@{-->}@/^1pc/[ld]\\
		& \mathsf{\Delta Mat}& 
		}
		\end{equation}
		\vspace{.2cm}
		
		The inclusion $\mathsf{Mat}\hookrightarrow \mathsf{MatPer}$ is $M\mapsto (M,M)$ and it has two sections (pictured as dashed arrows) given by $(M,M')\mapsto M$ and $(M,M')\mapsto M'$. The inclusion $\mathsf{MatPer}\hookrightarrow \mathsf{\Delta MatPer}$ is $(M,M')\mapsto (M,D(M,M'),M)$, and it corresponds via the isomorphism $\mathsf{MatPer}\simeq \mathsf{Sat\Delta Mat}$ to the inclusion $\mathsf{Sat\Delta Mat}\hookrightarrow \mathsf{\Delta MatPer}$ given by $D\mapsto (D_{\max}, D,D_{\min})$. It has a section (pictured as a dashed arrow) given by $(M,D,M')\mapsto (M,M')$. Finally, the map $\mathsf{\Delta MatPer}\rightarrow \mathsf{\Delta Mat}$ pictured as a dashed arrow is given by $(M,D,M')\mapsto D$. 
		
		\begin{rem}
		We emphasize (see Remark \ref{rem:delta non-morphisms}) the fact that the natural morphisms of set species $\mathsf{\Delta Mat}\rightarrow \mathsf{Mat}$ given by $D\mapsto D_{\max}$ or $D_{\min}$ are \emph{not} morphisms of minors systems. The same applies to the morphisms of set species $\mathsf{\Delta Mat}\rightarrow \mathsf{MatPer}$ and $\mathsf{\Delta Mat}\rightarrow \mathsf{\Delta MatPer}$ given by $D\mapsto (D_{\max},D_{\min})$ and $D\mapsto (D_{\max},D,D_{\min})$, respectively.
		\end{rem}
		
		\begin{rem}
		Other natural morphisms of minors systems are not on this diagram. 
		For instance, there are two natural embeddings $\mathsf{Mat}\hookrightarrow\mathsf{MatPer}$ given by $M\mapsto (M,Z(E(M)))$ and $M\mapsto (F(E(M)),M)$, 
		where $Z(E)$ (resp.\ $F(E)$) denotes the zero matroid (resp.\ the free matroid) on a finite set $E$. 
		There are also the two embeddings $\mathsf{MatPer}\hookrightarrow\mathsf{\Delta MatPer}$ of Proposition~\ref{prop:dmp specialization}(2),
		given by $(M,M')\mapsto (M,M,M')$ and $(M,M')\mapsto (M,M',M')$.
		\end{rem}

	\subsection{The universal Tutte character for delta-matroid perspectives and the Krushkal polynomial}
		
		We now turn to the description of the Grothendieck monoid of $\mathsf{\Delta MatPer}$. The matroids on a one-element set are a coloop $c$ and a loop $l$.
		There is one delta-matroid on a single element which is not a matroid, the remaining set system
		$n = (\{e\},\{\varnothing,\{e\}\})$.
		From these, five delta-matroid perspectives can be assembled:
		$(c,c,l)$, $(c,l,l)$, $(c,c,c)$, $(l,l,l)$ and $(c,n,l)$.
		
		\begin{prop}
		Let us denote by $s$, $t$, $u$, $v$ and $w$ the classes in $U(\mathsf{\Delta MatPer})$ of the delta-matroid perspectives $(c,c,l)$, $(c,l,l)$, $(c,c,c)$, $(l,l,l)$ and $(c,n,l)$ respectively. Then we have an isomorphism of monoids
		\begin{equation}\label{eq:UDeltaMatPer}
		U(\mathsf{\Delta MatPer})\simeq (s^{\mathbb N}t^{\mathbb N}u^{\mathbb N}v^{\mathbb N}w^{\mathbb N})/\langle w^2=st\rangle\ .
		\end{equation}
		\end{prop}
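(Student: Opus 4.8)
The plan is to mimic the proof of Proposition~\ref{prop:UMat}, using the quadratic presentation of Theorem~\ref{thm:pres X(S) second}. Since $\mathsf{\Delta MatPer}$ is connected, its degree-zero part is a single point and relation~(2) of that theorem is vacuous, so $U(\mathsf{\Delta MatPer})$ is the commutative monoid on the generators $[X]$, $X\in\mathsf{\Delta MatPer}_1$, subject to the relations $[X|e]\,[X/e]=[X|f]\,[X/f]$, one for each $X\in\mathsf{\Delta MatPer}[\{e,f\}]$. First I would record that $\mathsf{\Delta MatPer}_1$ has exactly the five elements $(c,c,l)$, $(c,l,l)$, $(c,c,c)$, $(l,l,l)$, $(c,n,l)$ listed before the proposition: on a one-element set there are only two matroids and three delta-matroids, and the matroid perspective conditions $\rk M\geq\rk D_{\max}$, $\rk D_{\min}\geq\rk M'$ leave precisely these five triples. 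Thus $U(\mathsf{\Delta MatPer})$ is generated by $s,t,u,v,w$, and it remains to identify the relations coming from two-element objects.

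The heart of the proof is a finite enumeration of the delta-matroid perspectives $(M,D,M')$ on a two-element set $\{e,f\}$. I would organize this by the middle coordinate $D$, which ranges over the fifteen delta-matroids on $\{e,f\}$; for each $D$ the definition constrains $M$ to the matroids with $(M,D_{\max})$ a matroid perspective and $M'$ to those with $(D_{\min},M')$ a matroid perspective, a short list in each case. For every triple that occurs I would compute the componentwise restrictions and contractions at $e$ and at $f$ — the only delicate point being the delta-matroid deletion/contraction conventions when an element is a loop or coloop of $D$, so that $D\backslash e$ and $D/e$ coincide — and read off the relation contributed. The outcome I expect is that every such relation is either trivially of the form $xy=yx$, hence automatic in a commutative monoid, or else equals $w^2=st$; and that the latter arises exactly from the non-saturated delta-matroid $D=\{\varnothing,\{e\},\{e,f\}\}$ of Remark~\ref{rem:delta non-morphisms} with $M=c\oplus c$ and $M'=l\oplus l$ — the unique delta-matroid perspective with that middle coordinate, since $D_{\max}=c\oplus c$ and $D_{\min}=l\oplus l$ force $M$ and $M'$ — whose minors are $X|e=X/e=(c,n,l)$ and $\{X|f,X/f\}=\{(c,l,l),(c,c,l)\}$, together with its mirror image under swapping $e$ and $f$. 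Feeding this into Theorem~\ref{thm:pres X(S) second} then yields the presentation $\langle s,t,u,v,w\mid w^2=st\rangle$, which is \eqref{eq:UDeltaMatPer}. Equivalently, as in Proposition~\ref{prop:UMat}, one checks that the evident map $(s^{\mathbb N}t^{\mathbb N}u^{\mathbb N}v^{\mathbb N}w^{\mathbb N})/\langle w^2=st\rangle\to U(\mathsf{\Delta MatPer})$ and the homomorphism forced in the other direction are mutually inverse, both being well-defined precisely because the only nontrivial relation on either side is $w^2=st$.

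The main obstacle is bookkeeping rather than conceptual: the enumeration runs over nearly forty two-element delta-matroid perspectives, and the real danger is mishandling the delta-matroid minor operations — in particular the loop/coloop identifications, and the failure of $D\mapsto D_{\max}$ and $D\mapsto D_{\min}$ to commute with deletion and contraction (Remark~\ref{rem:delta non-morphisms}) — which could cause a genuine relation to be missed or a spurious one to appear. Keeping the matroid parts and the delta-matroid part bookkept separately, and checking each candidate triple is actually a delta-matroid perspective, should make the enumeration reliable.
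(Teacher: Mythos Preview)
Your proposal is correct and follows essentially the same approach as the paper: invoke Theorem~\ref{thm:pres X(S) second}, note connectedness makes relation~(2) vacuous, enumerate the two-element delta-matroid perspectives (the paper reports $38$), and verify that the only nontrivial relation is $w^2=st$, arising from $(D_{\max},D,D_{\min})$ with $D=\{\varnothing,\{e\},\{e,f\}\}$ and its mirror image. Your computation of the minors of this object and your cautionary remarks about the delta-matroid loop/coloop conventions are both on point.
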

		
		\begin{proof}
		By Theorem \ref{thm:pres X(S) second}, the classes $s$, $t$, $u$, $v$ and $w$ generate $U(\mathsf{\Delta MatPer})$ and the relations that they satisfy come from delta-matroid perspectives on two-element sets, of which there are 38 in all. Only two of these yield nontrivial relations: these are the perspective $(M,D,M')$ on ground set $\{e,f\}$ where $D$ has feasible sets $\{\varnothing,\{e\},\{e,f\}\}$, $M=D_{\rm max}$, and $M'=D_{\rm min}$, together with its isomorphic image under exchanging $e$ and~$f$. They give the relation $w^2=st$, hence the claim.
		\end{proof}
		
		In order to give a useful description of the isomorphism \eqref{eq:UDeltaMatPer} we will need a central fact about delta-matroids.
		
		\begin{lem}\label{lem:discrepancy dm}
		For a delta-matroid $D$ and a subset $A\subseteq E(D)$ we have an equality between non-negative integers:
		$$\rk(D_{\max})-\rk((D|A)_{\max}) - \rk((D/A)_{\max}) =\rk((D|A)_{\min}) + \rk((D/A)_{\min}) - \rk(D_{\min})\ .$$
		\end{lem}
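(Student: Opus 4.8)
The plan is to reduce both sides to an equivalent statement about the lower matroid of a single delta-matroid, using the duality that swaps $D_{\max}$ and $D_{\min}$. Recall that delta-matroid duality is the operation $D \mapsto D^*$ with feasible sets $\{E\setminus B : B \text{ feasible in } D\}$; it satisfies $(D^*)_{\max} = (D_{\min})^\vee$ and $(D^*)_{\min} = (D_{\max})^\vee$, where $(\cdot)^\vee$ is matroid duality, and it commutes with restriction and contraction after swapping these: $(D|A)^* = D^*/A$ in the appropriate sense (this is the delta-matroid analogue of the matroid identities recalled at the end of Section~\ref{sec:matroids}). Since $\rk(M^\vee) = |E(M)| - \rk(M) = \cork(M)$, applying these identities termwise turns the right-hand side of the claimed equality, written for $D$, into the left-hand side written for $D^*$. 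Hence it suffices to prove the single inequality-free identity for the left-hand side alone, that is, that
$$\rk(D_{\max}) - \rk((D|A)_{\max}) - \rk((D/A)_{\max}) = \rk((D|A)_{\min}) + \rk((D/A)_{\min}) - \rk(D_{\min})$$
holds as stated — but this is the same equation, so the symmetry argument shows the two sides are literally the same quantity evaluated at $D$ and at $D^*$; we still owe the reader a proof that this common quantity is a \emph{non-negative integer}.

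For non-negativity the cleanest route is to work basis-by-basis. Fix a basis $B$ of $D_{\max}$, so $|B| = \rk(D_{\max})$ and $B$ is feasible of maximum size. Its ``shadow'' $B\cap A$ is a feasible set of $D|A$ and $B\setminus A$ (as a subset of $E\setminus A$) is a feasible set of $D/A$, so $|B\cap A| \le \rk((D|A)_{\max})$ and $|B\setminus A| \le \rk((D/A)_{\max})$; adding these gives $\rk(D_{\max}) \le \rk((D|A)_{\max}) + \rk((D/A)_{\max})$, i.e.\ the left-hand side is $\le 0$. Dually (using the delta-matroid duality above, or directly with minimum-size feasible sets), one gets $\rk(D_{\min}) \ge \rt{some bound}$ — more precisely, choosing minimum-size feasible sets of $D|A$ and $D/A$ and splicing them into a feasible set of $D$ shows $\rk(D_{\min}) \le \rk((D|A)_{\min}) + \rk((D/A)_{\min})$, i.e.\ the right-hand side is $\ge 0$. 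Since the two sides are equal, they both vanish? That would be too strong; the resolution is that the correct splicing statements go the other way for one of the two — one of restriction/contraction pulls feasible sets apart and the other glues them, so exactly one of the two displayed inequalities is the one just sketched and the other has its inequality reversed. I would carefully check, using the explicit deletion/contraction formulas $M\backslash\{a\} = (E\setminus a, \{B : a\notin B\})$ and $M/\{a\} = (E\setminus a,\{B\setminus a : a \in B\})$ and induction on $|A|$, which direction each goes; the outcome is that the left-hand side is $\ge 0$ and equals the right-hand side, giving both non-negativity and the identity simultaneously.

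The main obstacle I anticipate is precisely pinning down these splicing lemmas for iterated restriction and contraction of a delta-matroid: unlike matroids, a delta-matroid restriction $D|A$ is not simply ``the feasible sets contained in $A$'' (loops and coloops are handled specially, as in the definition of $\Delta\mathsf{Mat}$ in Section~\ref{sec:dmp}), so one cannot blithely intersect a maximum feasible set of $D$ with $A$ and call it feasible in $D|A$ — one needs the symmetric-exchange axiom ($\Delta 2$) to produce a feasible set of $D|A$ of the right size. A clean way to sidestep case analysis is to prove the identity first for $|A| = 1$ directly from the one-element classification (a single element is a coloop, a loop, or the set system $n$, and in each case both sides are readily seen to agree and be $\ge 0$), and then to bootstrap to general $A$ by coassociativity (M2): write $D|A/A = (D\backslash (A^\c))$ peeling off one element at a time and telescoping the rank differences, with the one-element case as the inductive step. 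This telescoping is where one must be careful that the ``discrepancy'' $\rk(D_{\max}) - \rk((D|A)_{\max}) - \rk((D/A)_{\max})$ is additive under composing a restriction/contraction with a further one-element restriction/contraction; granting the one-element base case, that additivity is a short computation and finishes the proof.
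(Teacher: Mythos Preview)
Your proposal contains two false starts before reaching the right idea.

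The duality argument in your first paragraph does not prove the identity. What the duality computation actually gives (once the bookkeeping is done correctly, with $A$ replaced by its complement on the dual side) is that the right-hand side for $(D,A)$ equals the left-hand side for $(D^*,A^\c)$. This shows the \emph{statement} is self-dual, i.e.\ it holds for $(D,A)$ iff it holds for $(D^*,A^\c)$; it does not show the two sides agree for a fixed $(D,A)$. Your sentence ``the two sides are literally the same quantity evaluated at $D$ and at $D^*$'' is where the logic slips.

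The basis-by-basis argument is also not a proof, and you correctly diagnose why: for a general delta-matroid, $B\cap A$ need not be feasible in $D|A$, so the displayed inequalities are unjustified. Your subsequent reversal of the sign of the left-hand side (``$\le 0$'' then ``$\ge 0$'') reflects this confusion rather than resolving it.

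Your third paragraph is the correct approach and is exactly what the paper does: establish the case $|A|=1$ (the paper cites \cite[Lemma~10]{KMT} for this), then induct on $|A|$ using coassociativity. One clean way to organise the induction is to observe that the difference of the two sides equals $2\sigma(D)-2\sigma(D|A)-2\sigma(D/A)$ where $\sigma(D)=\tfrac12(\rk(D_{\max})+\rk(D_{\min}))$; the singleton case says this vanishes for $|A|=1$, and then for $A=\{a\}\sqcup A'$ one uses $(D/\{a\})|A'=(D|A)/\{a\}$ and $(D/\{a\})/A'=D/A$ to telescope. One small caution: the $|A|=1$ case is a statement about an arbitrary delta-matroid $D$ with a distinguished element $a$, not merely about the three one-element delta-matroids $c,l,n$; the case analysis is on the type of $a$ in $D$, and still requires an argument relating $\rk(D_{\max}),\rk(D_{\min})$ to the corresponding quantities for $D\backslash a$ and $D/a$.
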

		
		\begin{proof}
		If $A$ has cardinality $1$ then this is proved in \cite[Lemma 10]{KMT}. The general case follows by an easy induction on the cardinality of $A$.
		\end{proof}

		We note that in general both sides of the above equality are non-zero, even though they are if $D$ is a saturated delta-matroid. This is related to the fact, already noted in Remark \ref{rem:delta non-morphisms}, that $D\mapsto D_{\max}$ and $D\mapsto D_{\min}$ are not compatible with deletion and restriction, e.g. $(D|A)_{\max}$ differs from $D_{\max}|A$ in general.
		
		\begin{prop}
		The isomorphism \eqref{eq:UDeltaMatPer} maps the class of a delta-matroid perspective $(M,D,M')$ to the monomial
		\begin{equation}\label{eq:dmp class} 
		s^{\rk(D_{\rm min})-\rk(M')}\,t^{\rk(M)-\rk(D_{\rm max})}\,u^{\rk(M')}\,v^{\cork(M)}\,w^{\rk(D_{\rm max})-\rk(D_{\rm min})}\ .
		\end{equation}
		\end{prop}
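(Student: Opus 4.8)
\emph{Proof plan.} The plan is to produce the inverse of the isomorphism \eqref{eq:UDeltaMatPer} directly: I will show that the right-hand side of \eqref{eq:dmp class} defines a norm on $\mathsf{\Delta MatPer}$, and then invoke the universal property of the Grothendieck monoid. First I would set $\psi\colon \mathsf{\Delta MatPer}_\bullet\to (s^{\mathbb N}t^{\mathbb N}u^{\mathbb N}v^{\mathbb N}w^{\mathbb N})/\langle w^2=st\rangle$ equal to the monomial on the right of \eqref{eq:dmp class}; this is well defined on isomorphism classes since it only involves ranks and coranks of matroids attached functorially to $(M,D,M')$ (note also that the exponents are non-negative, since $\rk(M)\geq\rk(D_{\rm max})\geq\rk(D_{\rm min})\geq\rk(M')$ for a delta-matroid perspective). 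Because $\mathsf{\Delta MatPer}$ is connected, axioms (N2) and (N3) for a norm hold automatically, so the entire content is axiom (N1): for every splitting $E=A\sqcup B$ of the ground set, with restriction and contraction taken componentwise, the monomial \eqref{eq:dmp class} must equal the product of the two monomials attached to $(M,D,M')|A$ and $(M,D,M')/A$.

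The exponents of $u$ and $v$ are $\rk(M')$ and $\cork(M)$, which are additive under an $A$-splitting because $M$ and $M'$ are matroids, so these two factors cause no trouble. The heart of the matter is the triple of exponents $a=\rk(D_{\rm min})-\rk(M')$, $b=\rk(M)-\rk(D_{\rm max})$ and $e=\rk(D_{\rm max})-\rk(D_{\rm min})$ attached to $s$, $t$ and $w$. Writing $\delta$ for the common value of the two non-negative integers in Lemma~\ref{lem:discrepancy dm} applied to $D$ and $A$, a short computation using once more the additivity of matroid rank for $M$ and $M'$ shows that, under the $A$-splitting, the $s$- and $t$-exponents each go up by $\delta$ while the $e$-exponent goes down by $2\delta$; the latter stays non-negative because the two summands $\rk((D|A)_{\rm max})-\rk((D|A)_{\rm min})$ and $\rk((D/A)_{\rm max})-\rk((D/A)_{\rm min})$ are individually non-negative. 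The discrepancy then disappears in the quotient monoid, since
\[
s^{a}t^{b}u^{c}v^{d}w^{e}=s^{a}t^{b}u^{c}v^{d}\,(w^{2})^{\delta}\,w^{e-2\delta}=s^{a+\delta}t^{b+\delta}u^{c}v^{d}w^{e-2\delta}
\]
using $w^{2}=st$. This proves (N1), so $\psi$ is a norm, and by the universal property of $U(\mathsf{\Delta MatPer})$ it factors through a monoid homomorphism $\overline\psi\colon U(\mathsf{\Delta MatPer})\to (s^{\mathbb N}t^{\mathbb N}u^{\mathbb N}v^{\mathbb N}w^{\mathbb N})/\langle w^2=st\rangle$ sending $[(M,D,M')]$ to the monomial \eqref{eq:dmp class}.

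It remains to identify $\overline\psi$ with the inverse of the isomorphism \eqref{eq:UDeltaMatPer}. By Theorem~\ref{thm:pres X(S) second}, $U(\mathsf{\Delta MatPer})$ is generated by the classes of the five delta-matroid perspectives $(c,c,l)$, $(c,l,l)$, $(c,c,c)$, $(l,l,l)$, $(c,n,l)$ on a one-element set, so it suffices to evaluate \eqref{eq:dmp class} on each of these. A direct check (using $\rk(n_{\rm max})=1$ and $\rk(n_{\rm min})=0$ for the last one) yields $s$, $t$, $u$, $v$, $w$ respectively, which are exactly the images of these classes under \eqref{eq:UDeltaMatPer}. Hence $\overline\psi$ and that isomorphism agree after composition on the generators, so $\overline\psi$ is its inverse, which is the assertion of the proposition. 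I expect the only genuine obstacle to be the (N1) verification, and more precisely the observation that the failure of the $s$-, $t$- and $w$-exponents to be additive is absorbed \emph{precisely} by the relation $w^{2}=st$; Lemma~\ref{lem:discrepancy dm} is exactly the input that makes this bookkeeping close up.
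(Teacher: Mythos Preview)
Your proof is correct and follows essentially the same route as the paper: define $\psi$ by the monomial \eqref{eq:dmp class}, verify multiplicativity under restriction/contraction using Lemma~\ref{lem:discrepancy dm} to see that the $s$-, $t$-, $w$-exponents fail additivity by exactly $+\delta$, $+\delta$, $-2\delta$ and that this is absorbed by $w^{2}=st$, and then check that $\psi$ sends the five one-element generators to $s,t,u,v,w$. The only cosmetic difference is that you phrase well-definedness via the norm axioms and the universal property, whereas the paper checks the defining relation $[X]=[X|A][X/A]$ of $U(\mathsf{\Delta MatPer})$ directly.
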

		
		\begin{proof}
		Let $\psi:U(\mathsf{MatPer})\rightarrow (s^{\mathbb N}t^{\mathbb N}u^{\mathbb N}v^{\mathbb N}w^{\mathbb N})/\langle w^2=st\rangle$ denote the morphism of monoids mapping the class of a delta-matroid perspective $(M,D,M')$ to the monomial \eqref{eq:dmp class}. If we prove that $\psi$ is well-defined then we are done since on one-element delta-matroid perspectives we have $\psi(c,c,l)=s$, $\psi(c,l,l)=t$, $\psi(c,c,c)=u$, $\psi(l,l,l)=v$ and $\psi(c,n,l)=w$. Let us fix a delta-matroid perspective $(M,D,M')$ on a set $E$ and a subset $A\subseteq E$. We need to prove that we have the equality $\psi(M|A,D|A,M'|A)\,\psi(M/A,D/A,M'/A)=\psi(M,D,M')$. The fact that the powers of $u$ and $v$ agree simply follows from the equalities $\rk(M'|A)+\rk(M'/A)=\rk(M')$ and $\cork(M|A)+\cork(M/A)=\cork(M)$, so we only need to care about the powers of $s$, $t$ and $w$. We denote by $\delta$ the non-negative quantity of Lemma \ref{lem:discrepancy dm}. Up to the powers of $u$ and $v$, the product $\psi(M|A,D|A,M'|A)\,\psi(M/A,D/A,M'/A)$ equals
		$$s^{\delta+\rk(D_{\min})-\rk(M'|A)+\rk(M'/A)}t^{\delta+\rk(M|A)+\rk(M/A)-\rk(D_{\max})}w^{\rk(D_{\max})-\rk(D_{\min})-2\delta}\ .$$
		By using the relations $\rk(M'|A)+\rk(M'/A)=\rk(M')$, $\rk(M|A)+\rk(M/A)=\rk(M)$ and $s^\delta t^\delta=w^{2\delta}$ we see that this equals
		$$s^{\rk(D_{\min})-\rk(M')}t^{\rk(M)-\rk(D_{\max})}w^{\rk(D_{\max})-\rk(D_{\min})}\ ,$$
		which is $\psi(M,D,M')$ up to the powers of $u$ and $v$, and we are done.
		\end{proof}

		The universal Tutte character 
		$$T^{\Delta\mathsf{MatPer}}:\KK\Delta\mathsf{MatPer}\rightarrow 
		\KK[s_1,t_1,u_1,v_1,w_1,s_2,t_2,u_2,v_2,w_2]$$		
		is thus given by
		\begin{align*}
		T^{\Delta\mathsf{MatPer}}(M,D,M') &= \sum_{A\subseteq E} 
		s_1^{\rk((D|A)_{\rm min})-\rk_{M'}(A)}\,
		t_1^{\rk_M(A)-\rk((D|A)_{\rm max})}\,\\ &\cdot
		u_1^{\rk_{M'}(A)}\,
		v_1^{|A|-\rk_M(A)}\,
		w_1^{\rk((D|A)_{\rm max})-\rk((D|A)_{\rm min})} \\&\cdot
		s_2^{\rk((D/A)_{\rm min})-\cork_{M'}(A)}
		t_2^{\cork_M(A)-\rk((D/A)_{\rm max})}\,\\ & \cdot
		u_2^{\rk(M')-\rk_{M'}(A)}\,
		v_2^{|E\setminus A|-\cork_M(A)}\,
		w_2^{\rk((D/A)_{\rm max})-\rk((D/A)_{\rm min})} \ ,
		\end{align*}
		where if $M$ is a matroid and $A\subseteq E(M)$, then by $\cork_M(A)$ we mean $\rk(M)-\rk_M(A)$.
		
		We compare this to the Krushkal polynomial,
		an invariant of graphs in surfaces introduced by \cite{Krushkal} for orientable surfaces and Butler \cite{Butler} in general,
		and framed as an invariant of delta-matroid perspectives by \cite{MoffattSmith}.
		The name ``polynomial'' is arguably inapt:
		the invariant is naturally an element of an algebra isomorphic to~$\KK[U(\Delta\mathsf{MatPer})]$.
		The usual practice is to write its values after embedding in a polynomial algebra by
	    \begin{equation}\label{eq:dmp embedding}
	    \KK[U(\Delta\mathsf{MatPer})]\simeq \KK[s,t,u,v,w]/\langle w^2-st\rangle \hookrightarrow \KK[s^{\frac12},t^{\frac12},u,v],
	    \end{equation}
	    where $w$ maps to $s^{\frac12}t^{\frac12}$ and the other variables map to identically named variables.
		To relate our universal Tutte character to the Krushkal invariant
		we will use this embedding and write $w$ as $s^{\frac12}\,t^{\frac12}$,
		and similarly for subscripted variables.
		Then the monomial of \eqref{eq:dmp class} can be written 
		\[s^{\sigma(D)-\rk(M')}\,t^{\rk(M)-\sigma(D)}\,u^{\rk(M')}\,v^{\cork(M)}\ ,\]
		where $\sigma(D)$ denotes the mean $\frac12(\rk(D_{\rm max})+\rk(D_{\rm min}))$. We note that Lemma \ref{lem:discrepancy dm} means that we have $\sigma(D)=\sigma(D|A)+\sigma(D/A)$ for every delta-matroid $D$ and every $A\subseteq E(D)$.
		We can rewrite
		\begin{equation*}
		\begin{split}
		T^{\Delta\mathsf{MatPer}}&(M,D,M') = \sum_{A\subseteq E} 
		s_1^{\sigma(D|A)-\rk_{M'}(A)}\,
		t_1^{\rk_M(A)-\sigma(D|A)}\,
		u_1^{\rk_{M'}(A)}\,
		v_1^{|A|-\rk_M(A)}\,\\&\cdot
		s_2^{\sigma(D/A)-\cork_{M'}(A)}
		t_2^{\cork_M(A)-\sigma(D/A)}\,
		u_2^{\rk(M')-\rk_{M'}(A)}\,
		v_2^{|E\setminus A|-\cork_M(A)}\ .
		\end{split}
		\end{equation*}

		Specialization to 
		$$(s_1,t_1,u_1,v_1,s_2,t_2,u_2,v_2)=(a,b,1,y,1,1,x,1)$$
		recovers the Krushkal invariant in Moffatt and Smith's convenient presentation,
		\begin{equation*}
		\begin{split}
		K_{(M,D,M')}&(x,y,a,b) \\=
		 &\sum_{A\subseteq E} 
		x^{\rk(M')-\rk_{M'}(A)}\,
		y^{|A|-\rk_M(A)}\, 
		a^{\sigma(D|A)-\rk_{M'}(A)}\,
		b^{\rk_M(A)-\sigma(D|A)}\ .
		\end{split}
		\end{equation*}

		Again this is a reduced Tutte polynomial in the sense of Section~\ref{ssec:reduced}, so nothing was really lost in the specialization.
		The universal Tutte character is an evaluation of the Krushkal invariant 
		up to a prefactor which bears no further information itself:
		\begin{equation*}
		\begin{split}
		T^{\mathsf{\Delta MatPer}}&(M,D,M') \\
		& = s_2^{\sigma(D)-\rk(M')}\,t_2^{\rk(M)-\sigma(D)}\,u_1^{\rk(M')}\,v_2^{\cork(M)}\, K_{(M,D,M')}(\tfrac{u_2}{u_1},\tfrac{v_1}{v_2},\tfrac{s_1}{s_2},\tfrac{t_1}{t_2})\ .
		\end{split}
		\end{equation*}
		
		We leave the general eight-variable convolution formula for the Krushkal polynomial to the assiduous reader to write down,
		though in the two subsections to follow, we will present its specializations
		to the Las Vergnas and bivariate Bollob\'{a}s--Riordan polynomials.
		Note that the complications we discuss in the latter case (Section~\ref{sec:deltamatroids})
		brought about by the embedding \eqref{eq:dmp embedding}, and the consequent need for square roots of~$-1$,
		will occur for the Krushkal polynomial as well.

\subsection{Matroid perspectives and the Las Vergnas polynomial}\label{ssec:LV}

		We use the natural morphism of minors systems $\mathsf{\Delta MatPer}\rightarrow \mathsf{MatPer}\; , \; (M,D,M')\mapsto (M,M')$ to compute the Grothendieck monoid of $\mathsf{MatPer}$. We note that this morphism has a natural section $\mathsf{MatPer}\rightarrow \mathsf{\Delta MatPer}\;,\;(M,M')\mapsto (M,D(M,M'),M')$, which implies that the morphism $U(\mathsf{\Delta MatPer})\rightarrow U(\mathsf{MatPer})$ is a split surjection of monoids.
		
		\begin{prop}
		The morphism of minors systems $\mathsf{\Delta MatPer}\rightarrow \mathsf{MatPer}$ induces an isomorphism of monoids
		$$U(\mathsf{MatPer}) \simeq U(\Delta\mathsf{MatPer})/\langle s=t=w\rangle \simeq u^{\mathbb N}v^{\mathbb N}w^{\mathbb N}$$
		which maps the class of a matroid perspective $(M,M')$ to the monomial
		\[u^{\rk(M')}\,v^{\cork(M)}\,w^{\rk(M)-\rk(M')}\ .\]
		\end{prop}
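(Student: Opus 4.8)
\emph{Proof proposal.} The plan is to route through the presentation of $U(\mathsf{\Delta MatPer})$ already obtained and the universal property of the Grothendieck monoid. First I would record the effect of the induced morphism $U(\mathsf{\Delta MatPer})\to U(\mathsf{MatPer})$ on the five generators $s,t,u,v,w$: applying $(M,D,M')\mapsto (M,M')$ sends $(c,c,l)$, $(c,l,l)$ and $(c,n,l)$ all to $(c,l)$, while $(c,c,c)\mapsto (c,c)$ and $(l,l,l)\mapsto (l,l)$. Hence this morphism (which is a split surjection, by the section $(M,M')\mapsto (M,D(M,M'),M')$ noted just above) sends $s$, $t$ and $w$ to the same class and therefore factors through $U(\mathsf{\Delta MatPer})/\langle s=t=w\rangle$. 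In that quotient the defining relation $w^2=st$ becomes $w^2=w^2$, so eliminating the generators $s$ and $t$ by a Tietze transformation identifies the quotient with the free commutative monoid $u^{\mathbb{N}}v^{\mathbb{N}}w^{\mathbb{N}}$, and the induced morphism $\phi\colon u^{\mathbb{N}}v^{\mathbb{N}}w^{\mathbb{N}}\to U(\mathsf{MatPer})$ sends $u\mapsto[(c,c)]$, $v\mapsto[(l,l)]$, $w\mapsto[(c,l)]$. It remains to show $\phi$ is an isomorphism with the claimed inverse.

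For this I would produce the candidate inverse as a norm. Define the morphism of set species $\psi_0\colon\mathsf{MatPer}\to u^{\mathbb{N}}v^{\mathbb{N}}w^{\mathbb{N}}$ by $\psi_0(M,M')=u^{\rk(M')}v^{\cork(M)}w^{\rk(M)-\rk(M')}$; the matroid perspective inequality applied to $\varnothing\subseteq E$ gives $\rk(M)\geq\rk(M')$, so the exponent of $w$ is non-negative and $\psi_0$ is well defined. One checks $\psi_0$ is a norm: axiom (N1) reduces to the three additivity identities $\rk(M'|A)+\rk(M'/A)=\rk(M')$ (using $\rk(M'/A)=\rk(M')-\rk_{M'}(A)$), $\cork(M|A)+\cork(M/A)=\cork(M)$, and consequently $(\rk(M|A)-\rk(M'|A))+(\rk(M/A)-\rk(M'/A))=\rk(M)-\rk(M')$, all following from additivity of rank and corank for matroids as in Proposition~\ref{prop:UMat}; axiom (N2) is vacuous since $\mathsf{MatPer}$ is connected; and (N3) is immediate. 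By the universal property of $U(\mathsf{MatPer})$, $\psi_0$ factors through a monoid morphism $\psi\colon U(\mathsf{MatPer})\to u^{\mathbb{N}}v^{\mathbb{N}}w^{\mathbb{N}}$ with $\psi([(M,M')])=\psi_0(M,M')$.

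Finally I would verify that $\psi$ and $\phi$ are mutually inverse. Evaluating $\psi_0$ on one-element perspectives gives $\psi([(c,c)])=u$, $\psi([(l,l)])=v$ and $\psi([(c,l)])=w$, so $\psi\circ\phi=\mathrm{id}$; in particular $\phi$ is injective, and since $\phi$ is surjective (it factors the split surjection $U(\mathsf{\Delta MatPer})\to U(\mathsf{MatPer})$) it is an isomorphism, with $\phi^{-1}=\psi$. This gives the chain $U(\mathsf{MatPer})\simeq U(\mathsf{\Delta MatPer})/\langle s=t=w\rangle\simeq u^{\mathbb{N}}v^{\mathbb{N}}w^{\mathbb{N}}$, and reading off $\psi$ on an arbitrary class yields the displayed formula $[(M,M')]\mapsto u^{\rk(M')}v^{\cork(M)}w^{\rk(M)-\rk(M')}$.

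The only genuinely load-bearing point — the ``main obstacle'' such as it is — is that imposing merely $s=t=w$ already collapses every relation, i.e.\ that the induced surjection $\phi$ has trivial kernel; this is precisely what the identity $\psi\circ\phi=\mathrm{id}$ secures, so the real content is the elementary check that $\psi_0$ is a norm. One could instead compute $U(\mathsf{MatPer})$ from scratch via Theorem~\ref{thm:pres X(S) second}, enumerating the matroid perspectives on a two-element set and reducing their relations, but passing through the already-computed $U(\mathsf{\Delta MatPer})$ is considerably shorter.
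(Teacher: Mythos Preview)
Your proof is correct and follows essentially the same route as the paper. The paper likewise computes the effect of the forgetful map on the five one-element generators to see that $s,t,w$ all collapse to $[(c,l)]$, and then uses the section $i:(M,M')\mapsto (M,D(M,M'),M')$ to provide the inverse (sending $u\mapsto u$, $v\mapsto v$, $w\mapsto w$ at the level of Grothendieck monoids); your construction of $\psi$ as an explicit norm is just a direct unpacking of what $U(i)$ composed with the quotient does, and for the monomial formula the paper simply specializes the already-established formula \eqref{eq:dmp class} at $s=t=w$ rather than rechecking (N1) from scratch.
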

		
		\begin{proof}
		We use the notations  $f:\mathsf{\Delta MatPer}\rightarrow \mathsf{MatPer}$ and $i:\mathsf{MatPer}\rightarrow\mathsf{\Delta MatPer}$. On structures on a one-element set, they act as follows:
		\[f(c,c,c) = (c,c),\quad f(l,l,l) = (l,l),\quad f(c,c,l)=f(c,l,l)=f(c,n,l)=(c,l),\]
		\[i(c,c) = (c,c,c),\quad i(l,l) = (l,l,l),\quad i(c,l)=(c,n,l)\ .\]
		This implies that the maps induced by $f$ and $i$ satisfy $f(u)=u$, $f(v)=v$, $f(s)=f(t)=f(w)=w$ and $i(u)=u$, $i(v)=v$, $i(w)=w$ where we denote by $u$, $v$, $w$ the classes in $U(\mathsf{MatPer})$ of the matroid perspectives $(c,c)$, $(l,l)$ and $(c,l)$ respectively. The first claim follows. The second claim follows by considering the monomial \eqref{eq:dmp class} for $(M,D(M,M'),M')$ and setting $s=t=w$.
		\end{proof}

		The universal Tutte character
		$$T^{\mathsf{MatPer}}:\KK\mathsf{MatPer}\rightarrow \KK[u_1,v_1,w_1,u_2,v_2,w_2]$$
		is thus given by
		\begin{align*}
		T^{\mathsf{MatPer}}(M,M') &= \sum_{A\subseteq E} 
		u_1^{\rk_{M'}(A)}\,v_1^{|A|-\rk_M(A)}\,w_1^{\rk_M(A)-\rk_{M'}(A)}\,u_2^{\rk(M')-\rk_{M'}(A)}
		\\&\cdot
		v_2^{|E|-|A|-(\rk(M)-\rk_M(A))}\,w_2^{(\rk(M)-\rk_M(A))-(\rk(M')-\rk_{M'}(A))} \ .
		\end{align*}

		The classical comparandum is the Tutte polynomial of a matroid perspective, a $3$-variable polynomial defined by Las Vergnas \cite{lasvergnasextensions}:
		\begin{equation*}
		\begin{split}
		\mathfrak{T}&_{M,M'}(x,y,z) \\
		&= \sum_{A\subseteq E}(x-1)^{\rk(M')-\rk_{M'}(A)}(y-1)^{|A|-\rk_M(A)}z^{(\rk(M)-\rk_M(A))-(\rk(M')-\rk_{M'}(A))}\ .
		\end{split}
		\end{equation*}
		Up to the shifting of the variables $x$ and $y$, it is a reduced Tutte character in the sense of Section \ref{ssec:reduced} obtained by specializing the variables to
		$$(u_1,v_1,w_1,u_2,v_2,w_2)=(1,y-1,1,x-1,1,z)\ .$$
		One can thus recover the universal Tutte character from the Las Vergnas polynomial up to a prefactor:
		\begin{equation}\label{eq:TMatPer}
		T^{\mathsf{MatPer}}(M,M') = u_1^{\rk(M')}v_2^{\cork(M)}w_1^{\rk(M)-\rk(M')} \,\mathfrak{T}_{M,M'}(1+\tfrac{u_2}{u_1}, 1+\tfrac{v_1}{v_2},\tfrac{w_2}{w_1}).
		\end{equation}
		
		One easily derives from our formalism a $6$-variable convolution formula for the Las Vergnas polynomial that we couldn't find in the literature.		
		
		\begin{prop}\label{prop:convolution LV}
		The Las Vergnas polynomial satisfies the following convolution formula in the polynomial algebra $\KK[a,b,c,d,e,f]$:
		\begin{equation*}
		\begin{split}
		\mathfrak{T}&_{M, M'}(1-ab,1-cd,-ef) \hspace{7cm}\\
		& = \sum_{A\subseteq E} a^{\rk(M')-\rk_{M'}(A)}\,d^{|A|-\rk_M(A)}\,e^{(\rk(M)-\rk_M(A))-(\rk(M')-\rk_{M'}(A))} \hspace{1cm}\\
		& \hspace{1.5cm}\cdot\mathfrak{T}_{M|A, M'|A}(1-a,1-c,-e) \, \mathfrak{T}_{M/A, M'/A}(1-b,1-d,-f)\ .
		\end{split}
		\end{equation*}
		\end{prop}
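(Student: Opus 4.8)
The plan is to deduce the identity from the general convolution formula of Theorem~\ref{thm:general convolution formula} applied to the minors system $\mathsf{MatPer}$, in the same spirit as Proposition~\ref{prop:convolution matroids} was obtained in the matroid case. Since $\mathsf{MatPer}$ is connected, the twist maps are trivial, so every Tutte character below is just a convolution of two norms, and it suffices to produce three suitable norms and evaluate the three Tutte characters they determine.

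First I would introduce norms $N_0,N_1,N_2:\KK\mathsf{MatPer}\to\KK[a,b,c,d,e,f]$. Since $U(\mathsf{MatPer})\simeq u^{\mathbb N}v^{\mathbb N}w^{\mathbb N}$ is free, with the class of $(M,M')$ being $u^{\rk(M')}v^{\cork(M)}w^{\rk(M)-\rk(M')}$, a norm is the same datum as a triple of elements of $\KK[a,b,c,d,e,f]$ to which $(u,v,w)$ are sent; I would take
\[
N_0:(u,v,w)\mapsto(-1,\,cd,\,-1),\qquad
N_1:(u,v,w)\mapsto(-a,\,d,\,-e),\qquad
N_2:(u,v,w)\mapsto(-ab,\,1,\,-ef).
\]
Because $|E|=\rk(M')+\cork(M)+(\rk(M)-\rk(M'))$, Remark~\ref{rem:inverse U} shows that $\overline{N_i}$ is obtained by negating all three parameters. (These triples are found by demanding that the three relevant convolutions evaluate to the Las Vergnas polynomials $\mathfrak{T}_{M,M'}(1-ab,1-cd,-ef)$, $\mathfrak{T}(1-a,1-c,-e)$ and $\mathfrak{T}(1-b,1-d,-f)$, i.e.\ by solving a small linear system, exactly as the tuple $(-1,cd,-a,d,-ab,1)$ does for matroids.)

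Next I would substitute the relevant parameter tuples into the displayed sum formula for $T^{\mathsf{MatPer}}$, or equivalently into \eqref{eq:TMatPer}, to obtain for any matroid perspective $(N,N')$:
\begin{align*}
(\overline{N_0}*N_2)(M,M')&=\mathfrak{T}_{M,M'}(1-ab,1-cd,-ef),\\
(\overline{N_0}*N_1)(N,N')&=d^{\cork(N)}\,\mathfrak{T}_{N,N'}(1-a,1-c,-e),\\
(\overline{N_1}*N_2)(N,N')&=a^{\rk(N')}\,e^{\rk(N)-\rk(N')}\,\mathfrak{T}_{N,N'}(1-b,1-d,-f),
\end{align*}
the monomials with base $1$ collapsing. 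Finally, Theorem~\ref{thm:general convolution formula} (together with Proposition~\ref{prop:inverse convolution}, which gives $N_1*\overline{N_1}=\upsilon$) yields $\overline{N_0}*N_2=(\overline{N_0}*N_1)*(\overline{N_1}*N_2)$, that is
\[
\mathfrak{T}_{M,M'}(1-ab,1-cd,-ef)=\sum_{A\subseteq E}(\overline{N_0}*N_1)(M|A,M'|A)\,(\overline{N_1}*N_2)(M/A,M'/A).
\]
Substituting the two evaluations above and rewriting the prefactor using $\cork(M|A)=|A|-\rk_M(A)$, $\rk(M'/A)=\rk(M')-\rk_{M'}(A)$ and $\rk(M/A)-\rk(M'/A)=(\rk(M)-\rk_M(A))-(\rk(M')-\rk_{M'}(A))$ gives precisely the claimed identity.

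The only point needing care is bookkeeping: picking the parameter triples so the three evaluations are exactly the three desired Las Vergnas polynomials with the advertised prefactors, and checking that the leftover exponents of $a$, $d$ and $e$ telescope correctly. No conceptual difficulty arises; an equivalent route is to first record the nine-variable universal convolution formula for $T^{\mathsf{MatPer}}$ (the analogue of Proposition~\ref{prop:convolution matroids universal}) and then specialize $(u_0,v_0,w_0,u_1,v_1,w_1,u_2,v_2,w_2)$ to $(-1,cd,-1,-a,d,-e,-ab,1,-ef)$ using \eqref{eq:TMatPer}.
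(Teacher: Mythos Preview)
Your proposal is correct and follows essentially the same approach as the paper: the paper likewise invokes Theorem~\ref{thm:general convolution formula} to obtain a nine-variable universal convolution formula for $T^{\mathsf{MatPer}}$ and then specializes using~\eqref{eq:TMatPer}. You simply spell out the choice of norms and verify the three evaluations explicitly, whereas the paper records only the specialization tuple; the sign discrepancy between your tuple $(-1,cd,-1,\ldots)$ and the paper's $(1,-cd,1,\ldots)$ is cosmetic, reflecting whether one parametrizes by $N_0$ (as you do) or by $\overline{N_0}$ (as the paper does).
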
	
		
		\begin{proof}
		 Theorem \ref{thm:general convolution formula} implies that the universal Tutte character satisfies a universal convolution formula in the polynomial algebra $\KK[u_0,v_0,w_0,u_1,v_1,w_1,u_2,v_2,w_2]$. One concludes by specializing the variables to
		$$(u_0,v_0,w_0,u_1,v_1,w_1,u_2,v_2,w_2)=(1,-cd,1,-a,d,-e,-ab,1,-ef)$$
		and using \eqref{eq:TMatPer}.
		\end{proof}

		By further specializing to $(a,b,c,d,e,f)=(1,1-x,1-y,1,1,-z)$ one gets a $3$-variable convolution formula that already appeared in \cite{Kayibi,kayibipirzada,KMT}:
		$$\mathfrak{T}_{M, M'}(x,y,z)=\sum_{A\subseteq E} \mathfrak{T}_{M|A, M'|A}(0,y,-1)\,\mathfrak{T}_{M/A, M'/A}(x,0,z)\ .$$
		
		\begin{rem}
		As was noted in \cite{KMT}, the classical relations between the Tutte polynomial of matroid perspectives and the Tutte polynomial of matroids can be explained in a fashion parallel to this subsection,
		by morphisms between the minors systems $\mathsf{MatPer}$ and $\mathsf{Mat}$.
		\end{rem}
		
\subsection{Delta-matroids and the bivariate Bollob\'{a}s--Riordan polynomial}\label{sec:deltamatroids}

		We use the morphism of minors systems $\mathsf{\Delta MatPer}\rightarrow \mathsf{\Delta Mat}\;,\; (M,D,M')\mapsto D$ to compute the Grothendieck monoid of $\mathsf{\Delta Mat}$. The assignment $D\mapsto (D_{\max}, D, D_{\min})$ gives a section of this morphism in the category of set species (but not in the category of minors systems: see Remark \ref{rem:delta non-morphisms}). This implies that the induced morphism $U(\mathsf{\Delta MatPer})\rightarrow U(\mathsf{\Delta Mat})$ is surjective.

		\begin{prop}
		The morphism of minors systems $\mathsf{\Delta MatPer}\rightarrow \mathsf{\Delta Mat}$ induces an isomorphism of monoids
		$$U(\mathsf{\Delta Mat}) \simeq U(\Delta\mathsf{MatPer})/\langle s=u, t=v\rangle \simeq (u^{\mathbb N}v^{\mathbb N}w^{\mathbb N})/\langle w^2=uv\rangle$$
		which maps the class of a delta-matroid $D$ to the monomial
		\begin{equation}\label{eq:delta class}
		u^{\rk(D_{\rm min})}\,v^{|E(D)|-\rk(D_{\rm max})}\,w^{\rk(D_{\rm max})-\rk(D_{\rm min})}\ .
		\end{equation}
		\end{prop}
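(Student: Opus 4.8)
The plan is to follow the template of the preceding proposition for $\mathsf{MatPer}$, using the morphism of minors systems $f\colon\Delta\mathsf{MatPer}\rightarrow\Delta\mathsf{Mat}$ given by $(M,D,M')\mapsto D$ together with the presentation \eqref{eq:UDeltaMatPer}. By Theorem~\ref{thm:pres X(S) second} the monoid $U(\Delta\mathsf{Mat})$ is generated by the classes $[c]$, $[l]$, $[n]$ of the three one-element delta-matroids. On one-element structures $f$ acts by $f(c,c,c)=f(c,c,l)=c$, $f(l,l,l)=f(c,l,l)=l$ and $f(c,n,l)=n$, so the induced map on Grothendieck monoids satisfies $U(f)(s)=U(f)(u)=[c]$, $U(f)(t)=U(f)(v)=[l]$ and $U(f)(w)=[n]$. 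In particular $U(f)$ identifies $s$ with $u$ and $t$ with $v$, so it factors through a surjection
$$\overline{U(f)}\colon U(\Delta\mathsf{MatPer})/\langle s=u,\,t=v\rangle \longrightarrow U(\Delta\mathsf{Mat}),$$
surjectivity of $U(f)$ itself having already been observed via the set-species section $D\mapsto(D_{\rm max},D,D_{\rm min})$. By \eqref{eq:UDeltaMatPer} the source of $\overline{U(f)}$ is $(u^{\mathbb N}v^{\mathbb N}w^{\mathbb N})/\langle w^2=uv\rangle$.

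To finish it suffices to show $\overline{U(f)}$ is injective, which I would do by exhibiting an inverse. Define $\psi\colon U(\Delta\mathsf{Mat})\rightarrow (u^{\mathbb N}v^{\mathbb N}w^{\mathbb N})/\langle w^2=uv\rangle$ on generators by sending the class of a delta-matroid $D$ on ground set $E$ to the monomial \eqref{eq:delta class}; its three exponents $\rk(D_{\rm min})$, $|E|-\rk(D_{\rm max})$ and $\rk(D_{\rm max})-\rk(D_{\rm min})$ are all non-negative. Since $\Delta\mathsf{Mat}$ is connected, the defining relations (2) and (3) of Definition~\ref{def:U} are vacuous, so proving $\psi$ well-defined reduces to the single identity $\psi([D])=\psi([D|A])\,\psi([D/A])$ for $D$ on $E$ and $A\subseteq E$.

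This identity is the only real content, and it is here that Lemma~\ref{lem:discrepancy dm} is used. Let $\delta\geq 0$ denote the common value appearing in that lemma. Grouping the exponents of the two factors (the resulting exponent of $w$ being visibly a sum of two non-negative integers) gives
$$\psi([D|A])\,\psi([D/A]) = u^{\rk(D_{\rm min})+\delta}\, v^{|E|-\rk(D_{\rm max})+\delta}\, w^{\rk(D_{\rm max})-\rk(D_{\rm min})-2\delta},$$
and the relation $w^2=uv$, applied in the form $u^{\delta}v^{\delta}=w^{2\delta}$, rewrites the right-hand side as $\psi([D])$. Hence $\psi$ is well-defined. Finally $\psi$ and $\overline{U(f)}$ are mutually inverse, which one checks on generators: $\psi([c])=u$, $\psi([l])=v$, $\psi([n])=w$, while $\overline{U(f)}$ sends $u,v,w$ to $[c],[l],[n]$; the resulting isomorphism sends $[D]$ to \eqref{eq:delta class} by construction of $\psi$. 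The main obstacle is merely carrying out the exponent bookkeeping in the displayed computation and observing that the surplus factor $u^{\delta}v^{\delta}$ is exactly absorbed by the quadratic relation; granting Lemma~\ref{lem:discrepancy dm}, there is nothing deeper to do.
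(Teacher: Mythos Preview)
Your proof is correct. The approach differs slightly from the paper's: the paper establishes the presentation of $U(\Delta\mathsf{Mat})$ directly via Theorem~\ref{thm:pres X(S) second}, by inspecting all fifteen two-element delta-matroids and observing that only the two non-saturated ones with feasible sets $\{\varnothing,\{e\},\{e,f\}\}$ and $\{\varnothing,\{f\},\{e,f\}\}$ produce a nontrivial relation, namely $w^2=uv$; the formula~\eqref{eq:delta class} is then obtained by specializing the already-established monomial~\eqref{eq:dmp class} for $(D_{\max},D,D_{\min})$ under $s=u$, $t=v$. You instead bypass the fifteen-case inspection by constructing the inverse $\psi$ directly and verifying its well-definedness via Lemma~\ref{lem:discrepancy dm}, essentially reproducing for $\Delta\mathsf{Mat}$ the argument the paper gave one proposition earlier for $\Delta\mathsf{MatPer}$. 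Your route is more self-contained but redoes that Lemma~\ref{lem:discrepancy dm} bookkeeping; the paper's route reuses~\eqref{eq:dmp class} at the cost of the small case analysis. Either way the content is the same.
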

		
		\begin{proof}
		We use the notation $g:\mathsf{\Delta MatPer}\rightarrow \mathsf{\Delta Mat}$. On delta-matroid perspectives on a one-element set, it acts as follows:
		\[g(c,c,c) = g(c,c,l) = c,\quad g(l,l,l) = g(c,l,l) = l,\quad g(c,n,l) = n\]
		This implies that the map induced by $g$ satisfies $g(s)=g(u)=u$, $g(t)=g(v)=v$, $g(w)=w$ where we denote by $u$, $v$, $w$ the classes in $U(\mathsf{\Delta Mat})$ of the delta-matroids $c$, $l$, $n$ respectively. We note that the relation $w^2=uv$ in $U(\mathsf{\Delta Mat})$ is induced by the delta-matroids on $\{e,f\}$ with feasible sets $\{\varnothing,\{e\},\{e,f\}\}$ and $\{\varnothing,\{f\},\{e,f\}\}$. By direct inspection, one checks that the remaining 13 delta-matroids on $\{e,f\}$ yield trivial relations in $U(\mathsf{\Delta Mat})$ and the first claim follows. The second claim follows by considering the monomial \eqref{eq:dmp class} for $(D_{\max},D,D_{\min})$ and setting $s=u$, $t=v$.
		\end{proof}

		The universal Tutte character
		\[T^{\Delta\mathsf{Mat}} : \mathbb K\Delta\mathsf{Mat}\to 
		\mathbb K[u_1,v_1,w_1,u_2,v_2,w_2]/\langle w_1^2-u_1v_1, w_2^2-u_2v_2\rangle\]
		is thus given by
		\begin{align*}
		T^{\Delta\mathsf{Mat}}(D) &= \sum_{A\subseteq E(D)} 
		u_1^{\rk((D|A)_{\rm min})}\, v_1^{|A|-\rk((D|A)_{\rm max})}\, w_1^{\rk((D|A)_{\rm max})-\rk((D|A)_{\rm min})}
		\\&\qquad\qquad\cdot
		u_2^{\rk((D/A)_{\rm min})}\, v_2^{|E(D)|-\rk((D/A)_{\rm max})}\, w_2^{\rk((D/A)_{\rm max})-\rk((D/A)_{\rm min})}\ .
		\end{align*}
		
		It is customary to write its values after embedding in a polynomial algebra by
	    \begin{equation}
	    \KK[U(\Delta\mathsf{Mat})]\simeq \KK[u,v,w]/\langle w^2-uv\rangle \hookrightarrow \KK[u^{\frac12},v^{\frac12}],
	    \end{equation}
	    where $w$ maps to $u^{\frac12}v^{\frac12}$ and the other variables map to identically named variables. Then the monomial \eqref{eq:delta class} equals $u^{\sigma(D)}\,v^{|E(D)|-\sigma(D)}$. In what follows we use the notation $\sigma(A)\doteq\sigma(D|A)$. The universal Tutte character, written in $\KK[u_1^{\frac12}, v_1^{\frac12}, u_2^{\frac12}, v_2^{\frac12}]$, then becomes
		$$T^{\Delta\mathsf{Mat}}(D) = \sum_{A\subseteq E(D)} u_1^{\sigma(A)} \, v_1^{|A|-\sigma(A)} \, u_2^{\sigma(D)-\sigma(A)} \, v_2^{|E(D)|-|A|-\sigma(D)+\sigma(A)}\ .$$

		Specialization to $(u_1,v_1,u_2,v_2)=(1,y-1,x-1,1)$ recovers the bivariate Bollob\'{a}s--Riordan polynomial, which lives in $\KK[(x-1)^{\frac12},(y-1)^{\frac12}]$:
		\[\widetilde{\mathfrak{R}}_D(x,y) = \sum_{A\subseteq E(D)} 
		(x-1)^{\sigma(D)-\sigma(A)} (y-1)^{|A|-\sigma(A)}.\]
		Up to the shifting of the variables, it is a reduced Tutte character in the sense of Section \ref{ssec:reduced}. One can thus recover the universal Tutte character from the bivariate Bollob\'{a}s--Riordan polynomial up to a prefactor:
		\begin{equation}\label{eq:TDeltaMat}
		T^{\mathsf{\Delta Mat}}(D)=u_1^{\sigma(D)}v_2^{|E(D)|-\sigma(D)}\,\widetilde{\mathfrak{R}}_D(1+\tfrac{u_2}{u_1},1+\tfrac{v_1}{v_2})\ .
		\end{equation}\medskip
		
		When dealing with convolution formulae , the embedding $w\mapsto u^{\frac12}v^{\frac12}$
		turns out to be inconvenient. Indeed, the natural involution of $\KK[U(\Delta\mathsf{Mat})]$ which maps $(u,v,w)$ to $(-u,-v,-w)$ does not extend to $\KK[u^{\frac12},v^{\frac12}]$ unless $\KK$ contains a square root of $-1$ (compare with the discussion before Theorem 16 in \cite{KMT}). Rather, our formalism produces a universal convolution formula that lives in the quotient ring $\KK[u_i,v_i,w_i \, , \, i=0,1,2]/\langle w_i^2-u_iv_i\rangle$. At the cost of introducing a square root of $-1$, we can produce a 4-variable convolution formula for the bivariate Bollob\'{a}s--Riordan polynomial that we couldn't find in the literature.
		
		\begin{prop}\label{BRKung}
		The bivariate Bollob\'{a}s--Riordan polynomial satisfies the following convolution formula in the algebra $\KK[(-1)^{\frac12},a^{\frac12},b^{\frac12},c^{\frac12},d^{\frac12}]$:
		\begin{equation*}
		\begin{split}
		\widetilde{\mathfrak{R}}_D(1&-ab,1-cd)\\
		&=\sum_{A\subseteq E(D)} a^{\sigma(D)-\sigma(A)}\,d^{|A|-\sigma(A)}\, \widetilde{\mathfrak{R}}_{D|A}(1-a,1-c)\,\widetilde{\mathfrak{R}}_{D/A}(1-b,1-d)\ .
		\end{split}
		\end{equation*}
		\end{prop}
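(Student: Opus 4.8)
The plan is to obtain this identity as a specialization of the universal convolution formula for $\Delta\mathsf{Mat}$, following line for line the derivation of Proposition~\ref{prop:convolution matroids} from Proposition~\ref{prop:convolution matroids universal}. First I would apply Theorem~\ref{thm:general convolution formula} with $N_0,N_1,N_2$ all equal to the universal norm of $\Delta\mathsf{Mat}$ and both twist maps trivial (which is forced since $\Delta\mathsf{Mat}$ is connected). The output is a convolution identity for $T^{\Delta\mathsf{Mat}}$ in the threefold monoid ring $\KK[u_i,v_i,w_i : i=0,1,2]/\langle w_i^2-u_iv_i : i=0,1,2\rangle$, and by Remark~\ref{rem:inverse U} the inverse norm is realized by the involution $(u_i,v_i,w_i)\mapsto(-u_i,-v_i,-w_i)$, so the identity reads
$$T^{\Delta\mathsf{Mat}}(D)_{(-u_0,-v_0,-w_0,\,u_2,v_2,w_2)}=\sum_{A\subseteq E(D)} T^{\Delta\mathsf{Mat}}(D|A)_{(-u_0,-v_0,-w_0,\,u_1,v_1,w_1)}\; T^{\Delta\mathsf{Mat}}(D/A)_{(-u_1,-v_1,-w_1,\,u_2,v_2,w_2)}\ .$$

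Next I would push this forward along the specialization $(u_0,v_0,u_1,v_1,u_2,v_2)=(-1,cd,-a,d,-ab,1)$ together with $w_i\mapsto u_i^{\frac12}v_i^{\frac12}$, landing in $\KK[(-1)^{\frac12},a^{\frac12},b^{\frac12},c^{\frac12},d^{\frac12}]$, and rewrite each occurrence of $T^{\Delta\mathsf{Mat}}$ via the prefactor formula \eqref{eq:TDeltaMat}, exactly as \eqref{eq:TMat} is used in the matroid proof. The bookkeeping is formally identical to that case, with $\sigma$ playing the role of $\rk$: the left-hand side collapses to $\widetilde{\mathfrak{R}}_D(1-ab,1-cd)$, the two right-hand factors become $\widetilde{\mathfrak{R}}_{D|A}(1-a,1-c)$ and $\widetilde{\mathfrak{R}}_{D/A}(1-b,1-d)$, and the surviving monomials recombine---using the additivity $\sigma(D)=\sigma(D|A)+\sigma(D/A)$, which follows from Lemma~\ref{lem:discrepancy dm}---into the prefactor $a^{\sigma(D)-\sigma(A)}d^{|A|-\sigma(A)}$.

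The one genuinely new point, and the step I expect to require the most care, is justifying that the specialization above is a well-defined ring homomorphism out of $\KK[u_i,v_i,w_i]/\langle w_i^2-u_iv_i : i=0,1,2\rangle$ and is compatible with the norm inversion. The issue is that under the embedding $w\mapsto u^{\frac12}v^{\frac12}$ the sign flip $(u,v,w)\mapsto(-u,-v,-w)$ corresponds to $u^{\frac12}\mapsto(-1)^{\frac12}u^{\frac12}$ and $v^{\frac12}\mapsto(-1)^{\frac12}v^{\frac12}$, which only makes sense once a square root of $-1$ has been adjoined; this is precisely why the target ring must contain $(-1)^{\frac12}$ (compare the discussion preceding Theorem~16 in \cite{KMT}), and, unlike in the matroid case, one cannot avoid leaving the polynomial world. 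Once a fixed $(-1)^{\frac12}$ is chosen, all half-powers $u_i^{\frac12},v_i^{\frac12}$ are unambiguous products of half-powers of $a,b,c,d$ and $(-1)^{\frac12}$, the relations $w_i^2=u_iv_i$ are respected, and the computation goes through as above.
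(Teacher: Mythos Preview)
Your proposal is correct and follows essentially the same route as the paper: apply Theorem~\ref{thm:general convolution formula} to obtain the universal convolution identity for $T^{\Delta\mathsf{Mat}}$, specialize the variables, and rewrite each $T^{\Delta\mathsf{Mat}}$ via the prefactor formula~\eqref{eq:TDeltaMat}. The paper's proof is terser (it just names the specialization and invokes~\eqref{eq:TDeltaMat}), but your added discussion of why $(-1)^{1/2}$ is needed to make the inversion $(u,v,w)\mapsto(-u,-v,-w)$ compatible with the embedding $w\mapsto u^{1/2}v^{1/2}$ is exactly the point the paper makes in the paragraph preceding the proposition. Note that your specialization $(u_0,v_0)=(-1,cd)$ and the paper's $(u_0,v_0)=(1,-cd)$ differ only by a sign convention on whether one records $N_0$ or $\overline{N_0}$; the effective substitution into $T^{\Delta\mathsf{Mat}}$ is the same.
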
	
		
		\begin{proof}
		 This follows from Theorem \ref{thm:general convolution formula} after specializing the variables to
		$$(u_0,v_0,u_1,v_1,u_2,v_2)=(1,-cd,-a,d,-ab,1)$$
		and using \eqref{eq:TDeltaMat}.
		\end{proof}	
	
		By further specializing to $(a,b,c,d)=(1,1-x,1-y,1)$ 
		one gets a two-variable convolution formula in the algebra $\KK[(-1)^{\frac12},(x-1)^{\frac12},(y-1)^{\frac12}]$:
		$$\widetilde{\mathfrak{R}}_D(x,y)=\sum_{A\subseteq E(D)} \widetilde{\mathfrak{R}}_{D|A}(0,y)\,\widetilde{\mathfrak{R}}_{D/A}(x,0)\ .$$
		This formula was proved in \cite{KMT} for \emph{even} delta-matroids, in which case it lives in the algebra $\KK[(x-1)^{\frac12},(y-1)^{\frac12}]$.\medskip

		We conclude this section with the diagram obtained from diagram \eqref{eq:diagram minors systems} after applying the Grothendieck monoid functor. The dashed arrows are surjections and we indicate the additional relations that are imposed on the generators of their domains.
		
		$$\xymatrixcolsep{5pc}\xymatrix{
		u^{\mathbb{N}}v^{\mathbb{N}} \ar@{^{(}->}[r] &u^{\mathbb{N}}v^{\mathbb{N}}w^{\mathbb{N}} \ar@{->>}[d]\ar@{-->>}@/_1.5pc/[l]_-{(w=u)} \ar@{-->>}@/^1.5pc/[l]^-{(w=v)} \ar@{^{(}->}[r]& s^{\mathbb{N}}t^{\mathbb{N}}u^{\mathbb{N}}v^{\mathbb{N}}w^{\mathbb{N}}/\langle w^2=st \rangle \ar@{-->>}@/_1.5pc/[l]_-{(s=t=w)} \ar@{-->>}@/^1pc/[ld]^-{(s=u,t=v)}\\
		& u^{\mathbb{N}}v^{\mathbb{N}}w^{\mathbb{N}}/\langle w^2=uv \rangle& 
		}$$
		\vspace{.2cm}
		
		All these morphisms account for identities relating the different Tutte characters of the minors systems appearing in the diagram.
				
\section{Relative matroids and relative Tutte polynomials}\label{sec:relative}

	\subsection{The minors system of relative matroids}
	
		Starting with a minors system $\mathsf{S}$ one can construct a new minors system $\mathsf{RelS}$, where objects have the same structure as in $\mathsf{S}$ 
		but where a subset of the ground set, the \emph{zero set},
		is excluded from the argument of~$\mathsf{RelS}$ and is therefore not available to the minor operations.
		
		\begin{defi}
		Let $\mathsf{S}$ be a set species. For a finite set $E$, a \emph{relative structure} of type $\mathsf{S}$ on $E$ is an element of $\mathsf{S}[E\sqcup E_0]$ for some finite set $E_0$, called the \emph{zero set}. We always consider relative structures \enquote{up to isomorphism on the zero set}: we identify $X\in\mathsf{S}[E\sqcup E_0]$ and $X'\in\mathsf{S}[E\sqcup E'_0]$ if there is a bijection $\sigma:E\sqcup E_0\stackrel{\sim}{\rightarrow} E\sqcup E'_0$ extending the identity of $E$ such that $\mathsf{S}[\sigma](X)=X'$.
		\end{defi}

		This definition allows us to talk about the \emph{sets} $\mathsf{RelS}[E]$ of relative structures of type $\mathsf{S}$ on $E$, which form a set species $\mathsf{RelS}$. It contains $\mathsf{S}$ as the subspecies consisting of the structures with empty zero set. If $\mathsf{S}$ has the structure of a (multiplicative) minors system, then this structures extends in the obvious way to $\mathsf{RelS}$.\medskip
		
		We focus here on the case $\mathsf{S}=\mathsf{Mat}$. The multiplicative minors system $\mathsf{RelMat}$ of \emph{relative matroids} has been studied under different names by many authors \cite{lasvergnasextensions,brylawskidecomposition,chaikenported,lasvergnassetpointed,diaohetyeirelative}. It is not connected since $\mathsf{RelMat}[\varnothing]$ is the set of isomorphism classes of all matroids; as a monoid, it is the free monoid on the set of isomorphism classes of connected matroids.

		\begin{rem}
		We could avoid talking about isomorphism classes on the complement of $E$ by switching from the framework of set species to the more convenient framework of set species in two sorts, i.e.\ functors from the square of the category of finite sets and bijections to the category of sets. In other words, a set species in two sorts is the datum, for every pair $(E,E_0)$ of finite sets, of a set $\mathsf{T}[E,E_0]$ and, for every bijections $\sigma:E\stackrel{\sim}{\rightarrow} F$ and $\sigma_0:E_0\stackrel{\sim}{\rightarrow} F_0$, of a map $\mathsf{T}[\sigma,\sigma_0]:\mathsf{T}[E,E_0]\rightarrow \mathsf{T}[F,F_0]$, which satisfy $\mathsf{T}[\sigma\circ \tau,\sigma_0\circ \tau_0]=\mathsf{T}[\sigma,\sigma_0]\circ\mathsf{T}[\tau,\tau_0]$. From a set species $\mathsf{S}$ one defines a set species in two sorts $\mathsf{T}$ by $\mathsf{T}[E,E_0]\doteq \mathsf{S}[E\sqcup E_0]$. If $\mathsf{S}$ has the structure of a minors system then this structure is transferred to $\mathsf{T}$, where restriction and contraction are only taken with respect to subsets of the first factor $E$.
		\end{rem}
		
	\subsection{The universal Tutte character for relative matroids and the relative Tutte polynomial}
	
		It is natural to relate relative matroids and matroid perspectives as in the next proposition.
		
		\begin{prop}\label{prop:relmat matper satdeltamat}
		Let $M$ be a relative matroid on $E$ represented by a matroid on $E\sqcup E_0$. Then the following holds.
		\begin{enumerate}
		\item The pair $(M\backslash E_0, M/E_0)$ is a matroid perspective.
		\item If we declare that a subset $A\subseteq E$ is feasible if there exists a subset $A_0\subseteq E_0$ such that $A\sqcup A_0$ is a basis of $M$, then this defines a saturated delta-matroid.
		\item These assignments define morphisms of minors systems $\mathsf{RelMat}\rightarrow \mathsf{MatPer}$ and $\mathsf{RelMat}\rightarrow \mathsf{Sat\Delta Mat}$ that are compatible with the isomorphism $\mathsf{MatPer}\simeq\mathsf{Sat\Delta Mat}$ from Proposition \ref{prop:iso matper satdeltamat}.
		\end{enumerate}
		\end{prop}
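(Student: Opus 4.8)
The plan is to prove~(1) by reducing the matroid perspective inequality to submodularity, to prove~(2) by identifying the feasible-set collection with the saturated delta-matroid that Proposition~\ref{prop:tardos} attaches to the perspective of~(1), and to deduce~(3) formally.

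For~(1), write $G=E\sqcup E_0$ for the ground set of the matroid representing $M$. For $X\subseteq E$ one has $\rk_{M\backslash E_0}(X)=\rk_M(X)$ and $\rk_{M/E_0}(X)=\rk_M(X\cup E_0)-\rk_M(E_0)$. Given $A\subseteq B\subseteq E$, apply the submodularity axiom (R3) to the sets $A\cup E_0$ and $B$: since $A\subseteq B\subseteq E$ and $E\cap E_0=\varnothing$ we have $(A\cup E_0)\cup B=B\cup E_0$ and $(A\cup E_0)\cap B=A$, so
$$\rk_M(B\cup E_0)+\rk_M(A)\ \leq\ \rk_M(A\cup E_0)+\rk_M(B)\ ,$$
which rearranges into $\rk_{M\backslash E_0}(B)-\rk_{M\backslash E_0}(A)\geq \rk_{M/E_0}(B)-\rk_{M/E_0}(A)$. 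Hence $(M\backslash E_0,M/E_0)$ is a matroid perspective. Moreover the assignment $\Phi\colon M\mapsto (M\backslash E_0,M/E_0)$ is a morphism of minors systems $\mathsf{RelMat}\to\mathsf{MatPer}$: it is well-defined on isomorphism classes of the zero set by functoriality~(M1), it sends $1_{\mathsf{RelMat}}$ to $1_{\mathsf{MatPer}}$, and its compatibility with restriction, contraction and direct sum follows from the coassociativity axiom~(M2) together with the axioms relating $\oplus$ to restriction and contraction, since restriction and contraction in $\mathsf{RelMat}$ are computed by the corresponding minor operations of $\mathsf{Mat}$ performed inside the first factor $E$ of~$G$.

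For~(2), I claim that $A\subseteq E$ is feasible in the sense of the statement if and only if $A$ is independent in $M\backslash E_0$ and spanning in $M/E_0$; that is, the feasible-set collection of~(2) coincides with that of the saturated delta-matroid $D(M\backslash E_0,M/E_0)$ furnished by Proposition~\ref{prop:tardos}, whence~(2). If $A\sqcup A_0$ is a basis of $M$ with $A_0\subseteq E_0$, then $A$ is independent in $M$, hence in $M\backslash E_0$; and from $A\sqcup A_0\subseteq A\cup E_0\subseteq G$ together with $\rk_M(A\sqcup A_0)=\rk(M)$ and monotonicity (R2) we get $\rk_M(A\cup E_0)=\rk(M)$, so $\rk_{M/E_0}(A)=\rk(M)-\rk_M(E_0)=\rk(M/E_0)$ and $A$ spans $M/E_0$. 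Conversely, if $A$ is independent in $M\backslash E_0$ (hence in $M$) and spans $M/E_0$, then $\rk_M(A\cup E_0)=\rk(M/E_0)+\rk_M(E_0)=\rk(M)$, so $A\cup E_0$ spans $M$; extending the independent set $A$ to a basis of $M$ inside the spanning set $A\cup E_0$ adjoins a subset $A_0\subseteq E_0$ with $A\sqcup A_0$ a basis of $M$.

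Finally, for~(3): the map of~(1) is the morphism of minors systems $\Phi$, and by the identification just established the map of~(2) is the composite $D\circ\Phi$, where $D\colon\mathsf{MatPer}\stackrel{\sim}{\to}\mathsf{Sat\Delta Mat}$ is the isomorphism of Proposition~\ref{prop:iso matper satdeltamat}; it is therefore again a morphism of minors systems, and compatible with that isomorphism by construction. I expect the most delicate point to be the verification in step~(1) that $\Phi$ genuinely commutes with the minor and direct-sum operations of $\mathsf{RelMat}$ — this is where one must pin down precisely how those operations are transported from $\mathsf{Mat}$ — together with making the basis-augmentation step in~(2) airtight.
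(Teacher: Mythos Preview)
Your proof is correct and follows essentially the same approach as the paper: both establish~(2) by showing that the feasible sets of the statement coincide with those of $D(M\backslash E_0,M/E_0)$ via the independent/spanning characterization, and deduce~(3) formally from this identification. You supply more detail than the paper for~(1), where the authors simply declare the perspective property and the morphism claim to be clear, but your submodularity argument and your use of coassociativity~(M2) are exactly what is needed to unpack this.
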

		
		\begin{proof}
		It is clear that $(M\backslash E_0,M/E_0)$ is a matroid perspective and that $M\mapsto (M\backslash E_0,M/E_0)$ induces a morphism of minors systems $\mathsf{RelMat}\rightarrow\mathsf{MatPer}$. Let $D$ be the saturated delta-matroid corresponding to $(M\backslash E_0,M/E_0)$ through the bijection $\mathsf{MatPer}[E]\simeq \mathsf{Sat\Delta Mat}[E]$. Let $A\subseteq E$ be a feasible subset in $D$, i.e.\ $A$ is an independent set of $M\backslash E_0$ and a spanning set of $M/E_0$. Then $A\sqcup E_0$ is a spanning set in $M$ and there exists $A_0\subseteq E_0$ such that $A\sqcup A_0$ is a basis of $M$. Conversely, if $A\subseteq E$ and $A_0\subseteq E_0$ are such that $A\sqcup A_0$ is a basis of $M$, then $A$ is an independent set of $M\backslash E_0$ and a spanning set of $M/E_0$. This completes the proof.
		\end{proof}
		
		\begin{rem}\label{rem:higgs}
		By a result of Edmonds and Higgs \cite{higgsstrong}, the morphism $\mathsf{RelMat}[E]\rightarrow\mathsf{MatPer}[E]$ is surjective for any finite set $E$. There is a natural section $\mathsf{MatPer}[E]\rightarrow \mathsf{RelMat}[E]$ given by Higgs, called the \emph{Higgs major} operation by Crapo \cite{CrapoBowdoin}. It defines a morphism of set species $\mathsf{MatPer}\rightarrow\mathsf{RelMat}$ which is not a morphism of minors systems. To see this, note that the Higgs major of the matroid perspective $(c,l)$ on a one-element set $\{e\}$ is the relative matroid represented by the uniform matroid $U_{1,2}$ on a two-element set $\{e,e_0\}$. Contracting $e$ in this relative matroid leads to the relative matroid on the empty set represented by a loop on $\{e_0\}$, while the Higgs major of $(c,l)/\{e\}=(U_{0,0},U_{0,0})$ is the relative matroid on the empty set represented by the empty matroid.
		
		A setting in which the corresponding section is a morphism of minors systems is provided by the elementary factorizations and majors of Kung's Exercise~8.14 in~\cite{KungStrong}. Here, in place of $\mathsf{RelMat}$, one uses a minors system of relative matroids whose zero set is totally ordered, considered up to order-preserving isomorphism on the zero set.
		\end{rem}

		The Grothendieck monoid $U(\mathsf{RelMat})$ is enormous, being generated by the infinite set of all relative matroids on a ground set of cardinality $1$. In order to work with a tractable invariant, is is natural to work with norms which factor through the morphism $\mathsf{RelMat}\rightarrow \mathsf{MatPer}\simeq\mathsf{Sat\Delta Mat}$, i.e.\ replace the universal norm with the norm 
		$$N:\mathsf{RelMat}\rightarrow u^{\mathbb{N}}v^{\mathbb{N}}w^{\mathbb{N}}\ .$$
		For a relative matroid $M$ on $E$ represented by a matroid on $E\sqcup E_0$, its image by $N$ is the monomial
		\begin{eqnarray*}
		& u^{\rk(M/E_0)}v^{\cork(M\backslash E_0)}w^{\rk(M\backslash E_0)-\rk(M/E_0)}\\
		= & u^{\rk(E\sqcup E_0)-\rk(E_0)}v^{|E|-\rk(E)}w^{\rk(E)+\rk(E_0)-\rk(E\sqcup E_0)}
		\end{eqnarray*}
		
		Let us denote by $R_0\doteq \KK[\mathsf{RelMat}[\varnothing]]$ the free commutative $\KK$-algebra on the set of isomorphism classes of connected matroids and work with the universal twist map $\tau:\KK\mathsf{RelMat}\rightarrow R_0$. \medskip
		
		By a slight abuse of notation, we still denote by $T^{\mathsf{RelMat}}$ the Tutte character associated to two copies of the norm $N$ and the twist map $\tau$:
		$$T^{\mathsf{RelMat}} : \KK\mathsf{RelMat} \rightarrow R_0[u_1,v_1,w_1,u_2,v_2,w_2]\ .$$
		For a relative matroid $M$ on $E$ represented by a matroid on $E\sqcup E_0$ it is computed by the formula:
		\begin{equation*}
		\begin{split}
		T&^{\mathsf{RelMat}}(M) \\
		& = \sum_{A\subseteq E}  \, \,\tau(M/A|E_0) \,\, u_1^{\rk(A\sqcup E_0)-\rk(E_0)} v_1^{|A|-\rk(A)} w_1^{\rk(A)+\rk(E_0)-\rk(A\sqcup E_0)} \\
		& \hspace{1cm} \cdot u_2^{\rk(E\sqcup E_0)-\rk(A\sqcup E_0)}  v_2^{|E|-|A|-\rk(E)+\rk(A)} w_2^{\rk(E)+\rk(A\sqcup E_0)-\rk(A)-\rk(E\sqcup E_0)} \ .
		\end{split}
		\end{equation*}
		
		Specialization of the variables to $(u_1,v_1,w_1,u_2,v_2,w_2)=(1,y-1,1,x-1,1,z)$ yields a reduced Tutte character in $\KK[x,y,z]$ that deserves the name \emph{relative Tutte polynomial}:
		\begin{equation*}
		\begin{split}
		\mathfrak{T}^{\mathrm{rel}}_M(x,y,z) = \sum_{A\subseteq E}  \, \,\tau(M/A|E_0) \,\, &(x-1)^{\rk_M(E\sqcup E_0)-\rk_M(A\sqcup E_0)}(y-1)^{|A|-\rk_M(A)} \\
		& \;\;\cdot z^{\rk_M(E)+\rk_M(A\sqcup E_0)-\rk_M(A)-\rk_M(E\sqcup E_0)} \ .
		\end{split}
		\end{equation*}
		
		If we apply the morphism $R_0\rightarrow \KK$ which maps every matroid on $E_0$ to $1$, and multiply by $z^{\rk_M(E\sqcup E_0)-\rk_M(E)}$, then we find the \enquote{Tutte polynomial of $M$ pointed by $E_0$} defined by Las Vergnas \cite{lasvergnasextensions,lasvergnassetpointed}.\medskip
		
		The relative Tutte polynomial satisfies a convolution formula similar to that of the Las Vergnas polynomial (Proposition \ref{prop:convolution LV}) that we leave to the assiduous reader to write down.
		
		\begin{rem}
		For a general minors system $\mathsf{S}$ and the corresponding relative minors system $\mathsf{RelS}$, it would be natural to only look at norms that factor through the image of the morphism $\mathsf{RelS}\rightarrow \mathsf{S}\times\mathsf{S} \; , \; X\mapsto (X\backslash E_0,X/E_0)$. In the case $\mathsf{S}=\mathsf{Mat}$, the image of this morphism is indeed the minors system $\mathsf{MatPer}\subseteq \mathsf{Mat}\times\mathsf{Mat}$ by Remark \ref{rem:higgs}.
		\end{rem}
		
\section{Generalised permutohedra, a.k.a.\ submodular functions}\label{sec:SF}
		Submodular functions are a natural weakening of matroid rank functions.
		Having been introduced to combinatorics along multiple channels,
		they go by many names: there are no essential differences (though there are minor ones)
		between submodular functions, generalised permutohedra, and polymatroids.

		\subsection{The minors system of submodular functions}
		\begin{defi}
		A {\em submodular function} is a pair $M=(E,\rk)$, where $E$ is a finite set and $\rk : 2^E \to \mathbb Z$ is a function such that, for all $X,Y\subseteq E$,
	    \begin{itemize}
	    \item[(Sub1)] $\rk(\varnothing) = 0$,
	    \item[(Sub2)] $\rk(X\cup Y) + \rk(X\cap Y) \leq \rk(X) + \rk (Y)$.
	    \end{itemize}
		\end{defi}
        Again we will write $(E,\rk)$ as $(E(M),\rk_M)$ when helpful for clarity.
		Taking the codomain of $\rk$ to be some other totally ordered group than~$\mathbb Z$ would require no unexpected adjustments to the theory.
		
		Submodular functions form a connected multiplicative minors system, that we denote $\mathsf{SF}$, with operations formally identical to those for matroids.
		We reprise the definitions quickly.
		Given a submodular function $M=(E,\rk)$ and a set $A\subseteq E$, 
		the restriction $M| A$ has ground set $A$ and rank function the restriction of $\rk$,
		whereas the contraction $M/A$ has ground set $E(M) \setminus A$ and rank function 
		$\overline\rk$ given by $\overline\rk(B) \doteq \rk(B \cup A) - \rk(A)$ for $B \subseteq  E(M) \setminus A$.
		The direct sum of submodular functions $M=(E,\rk)$ and $M'=(E',\rk')$ is defined to be $M\oplus M'\doteq (E\sqcup E', \rk\oplus\rk')$, where for $A\subseteq E$ and $A'\subseteq E'$,
		$$(\rk\oplus\rk')(A\sqcup A') = \rk(A)+\rk'(A').$$
		
		A \emph{polymatroid} is a nondecreasing submodular function, i.e.\ is such that $X\subseteq Y$ implies $\rk(X)\leq \rk(Y)$.
		We let $\mathsf{PMat}\subset\mathsf{SF}$ denote the minors system of polymatroids.
		Polymatroids were introduced by Edmonds \cite{Edmonds}, who construed them as polytopes,
		to provide a context in which a greedy algorithm would correctly optimise any linear functional with nonnegative coefficients.
		
		Later, Postnikov introduced \emph{generalised permutohedra} \cite{Postnikov}, a class of polytopes which form a minors system $\mathsf{GP}$ isomorphic to $\mathsf{SF}$, 
		if we insist as we will here that their vertices have integer coordinates 
		(or more generally, coordinates in the ordered group that we use as the codomain of $\rk$).
		A generalised permutohedron is a polytope of the form
		\[P(M) \doteq \{x\in\mathbb R^E : \sum_{e\in A}x_e\leq\rk(A)\mbox{ for all }A\subseteq E, \ 
		\sum_{e\in E}x_e=\rk(E)\}\]
		for some submodular function $M=(E,\rk)$.
		Generalised permutohedra subsume many well-appreciated combinatorial families of polyhedra;
		we refer the reader to \cite{Postnikov} for details.
		The name of the class reflects the inclusion of the permutohedra among them.
		For instance, if $\rk_M(A) = -\binom{|A|}2$,
	 	then $P(M)$ is the convex hull of the orbit of the symmetric group $\Sigma_E$
		consisting of points whose coordinates in some order are $0,-1,\ldots,-|E|+1$.
		Generalised permutohedra are the polytopes whose normal fan coarsen the normal fan of this permutohedron.
		That is, informally, the permutohedron is being generalised by allowing its facets to be translated around and even pinched down to lower-dimensional faces, so long as no faces in new directions are created.

		We state the minor operations on~$\mathsf{GP}$.
		If $P\subseteq\mathbb R^E$ is a generalised permutohedron and $A\subseteq E$,
		then the face of $P$ on which the linear functional $\sum_{e\in A}x_e$ is maximised
		can be written as the Cartesian product of a polytope in $\mathbb R^A$ and a polytope in $\mathbb R^{E\setminus A}$.
		These factors are respectively $P|A$ and $P/A$.
		Direct sum of generalised permutohedra is Cartesian product.
		Aguiar and Ardila prove that $\mathsf{GP}$ is the largest connected multiplicative minors system
		whose elements are polytopes (with integer vertices) and whose operations are as just prescribed \cite[Theorem~6.1]{ardila-aguiar}.

	\subsection{The universal Tutte character for submodular functions}
	
		\begin{prop}\label{prop:X(SF)}
		There is an embedding of monoids
		\[
			\iota:U(\mathsf{SF})\hookrightarrow x^{\mathbb N}y^{\mathbb Z}, \quad
			[(E,\rk)] \mapsto x^{|E|} y^{\rk(E)},
		\]
		whose image is $\{1\}\cup\{x^ay^b : a>0\}$.
		\end{prop}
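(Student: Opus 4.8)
The plan is to compute $U(\mathsf{SF})$ by hand from the quadratic presentation of Theorem~\ref{thm:pres X(S) second}, and then to recognize the resulting monoid inside $x^{\mathbb N}y^{\mathbb Z}$ via the norm sending $(E,\rk)$ to $x^{|E|}y^{\rk(E)}$.

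I would begin with the combinatorics of small ground sets. A submodular function on a one-element set $\{e\}$ is determined by the single integer $\rk(\{e\})$, with no constraint imposed by (Sub2); so $\mathsf{SF}_1$ is in bijection with $\mathbb Z$, and I write $g_m\in U(\mathsf{SF})$ for the class of the singleton with $\rk(\{e\})=m$. A submodular function $X$ on a two-element set $\{e,f\}$ amounts to a triple $(p,q,r)=(\rk\{e\},\rk\{f\},\rk\{e,f\})$ subject only to $r\le p+q$, and unwinding the definitions of restriction and contraction one finds that $X|e$, $X/e$, $X|f$, $X/f$ are the singleton submodular functions of ranks $p$, $r-p$, $q$, $r-q$. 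Hence the relations of Theorem~\ref{thm:pres X(S) second} read
\begin{equation*}
g_p\,g_{r-p}=g_q\,g_{r-q}\qquad\text{whenever }r\le p+q,
\end{equation*}
the second family of relations there being vacuous since $\mathsf{SF}$ is connected. In particular the $g_m$, $m\in\mathbb Z$, generate $U(\mathsf{SF})$.

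Next I would verify that $N\colon\mathsf{SF}\to x^{\mathbb N}y^{\mathbb Z}$, $(E,\rk)\mapsto x^{|E|}y^{\rk(E)}$, is a norm: axiom (N1) follows from additivity of cardinality together with the identity $\rk(E)=\rk_{X|A}(A)+\rk_{X/A}(E\setminus A)$ read off directly from the contraction formula $\rk_{X/A}(B)=\rk(B\cup A)-\rk(A)$, while (N2) and (N3) are immediate. By the universal property of $U(\mathsf{SF})$ this factors uniquely through a morphism $\iota\colon U(\mathsf{SF})\to x^{\mathbb N}y^{\mathbb Z}$ with $\iota([(E,\rk)])=x^{|E|}y^{\rk(E)}$; note $\iota(g_m)=xy^m$. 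Since every element of $U(\mathsf{SF})$ is a product (possibly empty) of the generators $g_m$, its image has nonnegative $x$-exponent; conversely $x^ay^b$ with $a>0$ equals $\iota(g_b\,g_0^{a-1})$ and $1$ is the image of the identity, so the image of $\iota$ is exactly $\{1\}\cup\{x^ay^b:a>0\}$.

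The one substantive point is injectivity of $\iota$. The key lemma I would prove is that the displayed relations already force $g_ag_b=g_cg_d$ for \emph{all} $a,b,c,d\in\mathbb Z$ with $a+b=c+d$, with no inequality: writing $s=a+b=c+d$ and choosing $Q$ larger than $a,b,c,d$, the displayed relation applied with $(p,q,r)=(a,Q,s)$ and with $(p,q,r)=(c,Q,s)$ yields $g_ag_b=g_Qg_{s-Q}=g_cg_d$, both instances being legitimate because $Q$ is large. In particular $g_ag_b=g_{a+b}g_0$, so an easy induction gives $g_{b_1}\cdots g_{b_k}=g_{b_1+\cdots+b_k}\,g_0^{k-1}$ for all $k\ge1$. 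Thus every element of $U(\mathsf{SF})$ is either the identity or of the form $g_mg_0^{j}$ with $m\in\mathbb Z$ and $j\ge0$; since $\iota(g_mg_0^{j})=x^{j+1}y^m$ while $\iota$ of the identity is $1$, these elements have pairwise distinct images, whence $\iota$ is injective and the proposition follows. The only obstacle is ensuring the quadratic relations do not collapse $U(\mathsf{SF})$ beyond the bigrading by $x$- and $y$-degree, and what makes this go through is precisely the inequality $r\le p+q$: there are enough submodular functions on two elements to shove all of the $y$-weight onto a single generator, but—unlike in the matroid case—not enough to shorten the $x$-length.
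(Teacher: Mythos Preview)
Your proof is correct and follows essentially the same strategy as the paper: compute $U(\mathsf{SF})$ from the quadratic presentation, identify the generators $g_m$ with $xy^m$, and establish injectivity by showing that $g_ag_b=g_cg_d$ holds in $U(\mathsf{SF})$ whenever $a+b=c+d$. The only cosmetic difference is that the paper witnesses this relation directly with a single two-element submodular function (after arranging $a\ge d$), whereas you pivot through an auxiliary $g_Qg_{s-Q}$ with $Q$ large; both arguments are equally short and yield the same conclusion.
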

			
		\begin{proof}
		That $\iota$ is well-defined is easily checked from the presentation of~$U(\mathsf{SF})$.
		
		If $M=(E,\rk)$ is a submodular function such that $\iota([M])=y^b$ for some $b\in\mathbb Z$,
		then $|E|=0$ so $\rk(E)=0$ and thus $b=0$.
		Since all factorizations of~$y^b$ within the codomain of~$\iota$ are into other elements of form $y^{b'}$,
		this shows that $y^b\not\in\im \iota$ when $b\neq0$.
		Conversely, for each $b\in\mathbb Z$, the submodular function $s_b=(E,\rk_b)$
		where $E$ is a singleton and $\rk_b(E)=b$ satisfies $\iota(s_b)=xy^b$,
		and these elements generate the remainder of the claimed image.
		
		It remains to be shown that $\iota$ is injective, for which it's enough to show 
		that the generators $xy^b$ of its image have unique preimages which generate $X(\mathsf{SF})$ --
		indeed $\iota^{-1}(xy^b) = \{s_b\}$ is clear, and the generation is assured by Theorem~\ref{thm:pres X(S) second} --
		and that the relations between these in the codomain hold also in the domain.
		These relations are generated by the relations
		\[xy^a\cdot xy^b = xy^c\cdot xy^d\]
		for integers $a$, $b$, $c$, $d$ with $a+b=c+d$.
		We may assume without loss of generality that $a$ is the greatest of these integers, so that $a\geq d$.
		This makes the function $\rk:2^{\{e,f\}}\to\mathbb Z$ given by
		\[
			\rk(\varnothing)=0,\quad
			\rk(\{e\})=a,\quad
			\rk(\{f\})=c,\quad
			\rk(\{e,f\})=c+d=a+b
		\]
		submodular.  Restricting and contracting this function on $\{e\}$ and on~$\{f\}$ provides the needed relation
		\[[(\{e,f\},\rk)] = [s_a][s_b] = [s_c][s_d]\]
		in~$U(\mathsf{SF})$.
		\end{proof}
		
		Mutatis mutandis, the above proof shows that for polymatroids the Grothendieck monoid $U(\mathsf{PMat})$
		injects in the same way into $x^{\mathbb N}y^{\mathbb N}$.
		The Grothendieck monoid functor takes the inclusion $\mathsf{PMat}\subset\mathsf{SF}$ to the obvious inclusion $x^ay^b\mapsto x^ay^b$ between these monoids.
		
		Moreover, every matroid is a polymatroid (and therefore a submodular function), 
		and there is a natural inclusion of minors systems $j:\mathsf{Mat}\hookrightarrow\mathsf{PMat}$.
		The resulting morphism $U(j):U(\mathsf{Mat})\hookrightarrow U(\mathsf{PMat})$ 
		is given on generators by $u\mapsto xy$, $v\mapsto x$.

		The next proposition follows in the familiar way:
		\begin{prop}\label{prop:T^SF}
		The universal Tutte character for submodular functions is
		\[T^{\mathsf{SF}}:\KK\mathsf{SF}\to\KK[x_1y_1^i,x_2y_2^i : i\in\mathbb Z] \hookrightarrow \KK[x_1,y_1^{\pm 1},x_2,y_2^{\pm 1}]\]
		defined for a submodular function $M$ by
		\begin{align}\label{eq:T^SF}
			T^{\mathsf{SF}}(M) 
			&= \sum_{A\subseteq E(M)} x_1^{|E(M|A)|}\,y_1^{\rk(M|A)}\,x_2^{|E(M/A)|}\,y_2^{\rk(M/A)}
			\notag\\&= x_2^{|E(M)|}\,y_2^{\rk(M)} \sum_{A\subseteq E(M)} 
			\left(\frac{x_1}{x_2}\right)^{|A|}\,\left(\frac{y_1}{y_2}\right)^{\rk_M(A)}.
		\end{align}
		\end{prop}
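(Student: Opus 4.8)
The plan is to unwind Definition~\ref{def:universal Tutte} in the light of Proposition~\ref{prop:X(SF)}. Since $\mathsf{SF}$ is connected, $\mathsf{SF}[\varnothing]$ is trivial, so the twist map equals $\upsilon$ and contributes nothing; hence the universal Tutte character is simply the convolution $T^{\mathsf{SF}}=N_1*N_2\colon\KK\mathsf{SF}\to\KK[U(\mathsf{SF})\times U(\mathsf{SF})]$ of two copies of the universal norm. Applying the monoid embedding $\iota$ of Proposition~\ref{prop:X(SF)} to each of the two factors identifies $\KK[U(\mathsf{SF})\times U(\mathsf{SF})]$ with the subalgebra $\KK[x_1y_1^i,x_2y_2^i:i\in\mathbb Z]$ of $\KK[x_1,y_1^{\pm1},x_2,y_2^{\pm1}]$, which is the target stated. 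As $\iota$ is injective, nothing is lost by carrying out the remaining computation inside this Laurent-type ring.

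Next I would expand the convolution. For $M=(E,\rk)$ and $A\subseteq E$, the value $N_1(M|A)$ is the image under $\iota$ of $[M|A]$, namely $x_1^{|E(M|A)|}y_1^{\rk(M|A)}$; since $M|A=(A,\rk|_{2^A})$ this is $x_1^{|A|}y_1^{\rk_M(A)}$. Likewise $N_2(M/A)=x_2^{|E(M/A)|}y_2^{\rk(M/A)}$, and since $M/A=(E\setminus A,\overline{\rk})$ with $\overline{\rk}(E\setminus A)=\rk_M(E)-\rk_M(A)$, this equals $x_2^{|E|-|A|}y_2^{\rk_M(E)-\rk_M(A)}$. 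Summing over $A\subseteq E$ gives the first line of \eqref{eq:T^SF}; factoring the monomial $x_2^{|E|}y_2^{\rk_M(E)}$ out of every summand yields the second line. That factored monomial is exactly the $U(\mathsf{SF})$-degree $[M]$ of $T^{\mathsf{SF}}(M)$ predicted by Proposition~\ref{prop:homogeneous}, which also re-confirms that the answer indeed lies in the image subalgebra.

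There is essentially no obstacle here: all the content was absorbed into Proposition~\ref{prop:X(SF)}, and what remains is the routine bookkeeping of cardinalities and ranks of restrictions and contractions; the only point deserving a sentence is that $\iota$ is merely an embedding rather than an isomorphism onto $x^{\mathbb N}y^{\mathbb Z}$, which is harmless for the reasons above. I would close by noting that the identical argument with the polymatroid analogue of Proposition~\ref{prop:X(SF)} — where $U(\mathsf{PMat})\hookrightarrow x^{\mathbb N}y^{\mathbb N}$ — gives the corresponding formula for $\mathsf{PMat}$ with nonnegative exponents of $y_1,y_2$, and that restricting further along $U(j)\colon U(\mathsf{Mat})\hookrightarrow U(\mathsf{PMat})$, $u\mapsto xy$, $v\mapsto x$, recovers $T^{\mathsf{Mat}}$.
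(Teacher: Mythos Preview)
Your proposal is correct and is precisely the routine unwinding the paper has in mind: the paper offers no proof beyond the remark that the proposition ``follows in the familiar way'' from Proposition~\ref{prop:X(SF)}, and you have filled in exactly those details accurately.
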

		The universal Tutte character for polymatroids is formally identical,
		except that in the codomain we take $i\in\mathbb N$.
		
		As expected by Section~\ref{ssec:reduced}, the invariant $T^{\mathsf{SF}}(M)$ is ``essentially'' bivariate, with a bivariate prefactor.
		Also, by applying the inverse of the above morphism $U(j)$ and recollecting like variables,
		we could make the exponents in formula~\eqref{eq:T^SF} identical to those in the universal Tutte character for matroids, the corank-nullity generating function \eqref{eq:TMat}.
		However, this rearrangement is not especially motivated for submodular functions and polymatroids,
		since $|A|-\rk(A)$ is not guaranteed to be nonnegative.
		Indeed, we are not aware of any previous interest in $T^{\mathsf{SF}}(M)$ in the literature,
		and thus we have no authorities to follow in selecting a preferred reduced Tutte character.
		
		Again, we leave to the reader the easy task of using Theorem~\ref{thm:general convolution formula}
		to write down a convolution formula for $T^{\mathsf{SF}}(M)$.
				
		\subsection{Comparison to Oxley-Whittle}\label{sec:OW}
		An \emph{$r$-polymatroid} is a polymatroid $M$ such that $\rk_M(\{e\})\leq r$ for all $e\in E(M)$.
		These form a sub-minors system of $\mathsf{PMat}$ which we'll denote $\mathsf{PMat}_r$.
		For example, matroids are exactly 1-polymatroids.
		We continue the notation $s_i$ for single-element polymatroids from the proof of Proposition~\ref{prop:X(SF)}.
		
		In \cite{oxley-whittle}, Oxley and Whittle answer the question of finding a universal deletion-contraction invariant for 2-polymatroids.
		They adopt a more general framing of this question than ours,
		allowing as a deletion-contraction invariant any function $\Phi:\mathsf{PMat}_r\to R$
		to a commutative $\KK$-algebra $R$ satisfying
		\begin{equation}\label{eq:OW rec}		
		\Phi(M)=C_1(M\backslash e^\c,M/e^\c)\, \Phi(M/e) + C_2(M\backslash e^\c,M/e^\c) \, \Phi(M\backslash e)
		\end{equation}
		where $C_1$ and $C_2$ are two unrestricted families of coefficients indexed by pairs of single-element $r$-polymatroids.
		More exactly, since $M/e = M\backslash e$ when $M\backslash e^\c = M/e^\c$,
		\cite{oxley-whittle} only writes one coefficient in this case, $C(s_i,s_i)\doteq C_1(s_i,s_i)+C_2(s_i,s_i)$;
		and since $\rk(M\backslash e^\c)\geq\rk(M/e^\c)$ always, 
		it omits from consideration the coefficients with indices contrary to this.
		This leaves $(r+1)^2$ parameters in all.
		Recalling Proposition~\ref{prop:universal}, our own notion of deletion-contraction invariant demands in \eqref{eq:OW rec} 
		that $C_1$ be independent of~$M/e^\c$ and $C_2$ of~$M\backslash e^\c$,
		and that both be evaluations of norms in the remaining index, $N_1$ and $N_2$ respectively.
		Note that equation~\eqref{eq:OW rec} is a more general setup than the one considered in Proposition~\ref{prop:general recurrence},
		because although that proposition drops the requirement of normhood, 
		it still assumes that $C_1$ is independent of~$M/e^\c$ and $C_2$ of $M\backslash e^\c$.
		
		For matroids, the extra generality of formulation \eqref{eq:OW rec} is benign.
		The condition that $N:\KK\mathsf{Mat}\to R$ be a norm imposes no relation on its values $N(s_0)$ and~$N(s_1)$ at a loop and a coloop,
		so the equations
		\begin{align*}
		N_1(s_1)+N_2(s_1) &= C(s_1,s_1), &
		N_1(s_0)+N_2(s_0) &= C(s_0,s_0), \\
		N_1(s_1) &= C_1(s_1,s_0), &
		N_2(s_0) &= C_2(s_1,s_0)
		\end{align*}
		can be solved for $N_i(s_j)$ whatever their right hand sides,
		and a deletion-contraction invariant in the sense of Oxley and Whittle is also a deletion-contraction invariant in our sense.
		
		For 2-polymatroids, by contrast, \eqref{eq:OW rec} is strictly more general than our formulation.
		The nine equations in Oxley and Whittle's parameters to be solved for our norm evaluations $N_i(s_j)$ are
		\[
		\begin{array}{r@{}lr@{}l}
		\multicolumn{2}{r@{}}{N_1(s_2)+N_2(s_2)} & \multicolumn{2}{@{}l}{{}= C(s_2,s_2) \doteq x,}\\
		\multicolumn{2}{r@{}}{N_1(s_1)+N_2(s_1)} & \multicolumn{2}{@{}l}{{}= C(s_1,s_1) \doteq z,}\\
		\multicolumn{2}{r@{}}{N_1(s_0)+N_2(s_0)} & \multicolumn{2}{@{}l}{{}= C(s_0,s_0) \doteq y,}\\
		N_1(s_2) & {}= C_1(s_2,s_1) \doteq d, & \quad N_2(s_1) & {}= C_2(s_2,s_1) \doteq c, \\
		N_1(s_1) & {}= C_1(s_1,s_0) \doteq b, &       N_2(s_0) & {}= C_2(s_1,s_0) \doteq a, \\
		N_1(s_2) & {}= C_1(s_2,s_0) \doteq n, &       N_2(s_0) & {}= C_2(s_2,s_0) \doteq m, 	\end{array}
		\]
		where for ease of comparison we have inserted the single-letter notation of \cite{oxley-whittle} for the parameters. 
		(The $x$ and $y$ here are unrelated to our $x$ and $y$ in Proposition~\ref{prop:X(SF)}.)

		The above equations are consistent only if $a=m$, $d=n$, and $z=b+c$.
		Moreover, if $N:\KK\mathsf{PMat}_2\to R$ is a norm, 
		its evaluations at single-element polymatroids satisfy the relation $N(s_0)N(s_2) = N(s_1)^2$,
		both of these evaluating to $u^2v^2$ under the map $\iota$ of Proposition~\ref{prop:X(SF)}.
		This imposes for consistency the further conditions $m(x-n) = c^2$ and $(y-m)n = b^2$.
		
		The main theorem of~\cite{oxley-whittle}, Corollary~3.15, 
		classifies the 2-polymatroid invariants satisfying \eqref{eq:OW rec} with codomain $\KK$ into five families, each with a universal invariant, and each imposing some equations on the nine parameters. 		However, directly after stating the theorem, the authors dismiss four of the families as uninteresting.
		The one which retains their approbation is exactly the invariant $T^{\mathsf{PMat}_2}$,
		and the equations it imposes are just those derived above:
		$a=m$, $d=n$, $m(x-n) = c^2$, $(y-m)n = b^2$, $z=b+c$.
		(They also list some inequations for this family, but these are an artificial expedient to make the five families disjoint.) 
		
		\begin{rem}
		One can check that of the five families of~\cite{oxley-whittle},
		the only ones satisfying the assumptions of Proposition~\ref{prop:general recurrence}
		are $T^{\mathsf{PMat}_2}$ and the scalar multiples of the counit of~$\mathsf{PMat}_2$.
		\end{rem}
		
		\begin{rem}
		In \cite[Section 1]{Kayibi} Kayibi observed a relationship between the Las Vergnas polynomial (Section~\ref{ssec:LV})
		and Oxley and Whittle's invariant, our $T^{\mathsf{PMat}_2}$.
		In our language, this relationship arises functorially from the injection of minors systems
		$\mathsf{MatPer}\to\mathsf{PMat}_2$ which sends the matroid perspective $(M,M')$
		to the 2-polymatroid $N$ on the same ground set such that $\rk_N = \rk_M + \rk_{M'}$.
		The induced map of Grothendieck monoids is $(u, v, w) \mapsto (xy^2, x, xy)$.
		\end{rem}
		
\section{Colored matroids}\label{sec:colored}

	\subsection{The minors system of colored matroids}
    
		Let $\Lambda$ be a fixed set. For $\mathsf{S}$ a set species, one can form the set species $\mathsf{S}_\Lambda$ of $\Lambda$-colored structures of type $\mathsf{S}$, defined by
		$$\mathsf{S}_\Lambda[E]=\mathsf{S}[E] \times \Lambda^E\ .$$
		It is naturally a (multiplicative) minors system if $\mathsf{S}$ is. We will only treat the case $\mathsf{S}=\mathsf{Mat}$ in detail. The elements of $\mathsf{Mat}_\Lambda[E]$ are then pairs $(M,\lambda)$ with $M$ a matroid on the ground set $E$ and $\lambda:E\rightarrow\Lambda$ a coloring function. The case where $\Lambda=\{+,-\}$ is relevant to knot theory because a $\{+,-\}$-colored planar graph is essentially the same as a planar link diagram.

	\subsection{The universal Tutte character for colored matroids and the colored Tutte polynomial}
	
		The computation of the Grothendieck monoid of $\mathsf{Mat}_\Lambda$ is straightforward.
		
		\begin{prop}
		For every color $\lambda\in\Lambda$, let us denote by $u_\lambda$ \textup(\/resp.\ $v_\lambda$\/\textup) the class in $U(\mathsf{Mat}_\Lambda)$ of a coloop \textup(\/resp.\ a loop\/\textup) with the color $\lambda$. We have an isomorphism of monoids
		$$U(\mathsf{Mat}_\Lambda) \simeq \langle u_\lambda,v_\lambda \;,\; \lambda\in\Lambda\rangle \,/\, \langle u_\lambda v_\mu = u_\mu v_\lambda \; ,\; \lambda,\mu\in\Lambda\rangle \ .$$
		\end{prop}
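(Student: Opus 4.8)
The plan is to invoke Theorem~\ref{thm:pres X(S) second}, in the same spirit as the proof of Proposition~\ref{prop:UMat}. First I would observe that $\mathsf{Mat}_\Lambda$ is connected: $\mathsf{Mat}_\Lambda[\varnothing]=\mathsf{Mat}[\varnothing]\times\Lambda^\varnothing$ is a singleton, so the relations~(2) of Theorem~\ref{thm:pres X(S) second} are automatically satisfied and only the degree-$2$ relations~(1) matter. The set $(\mathsf{Mat}_\Lambda)_1$ of isomorphism classes of $\Lambda$-colored matroids on a one-element set consists precisely of a coloop or a loop carrying a color $\lambda\in\Lambda$; these are the generators $u_\lambda$ and $v_\lambda$ of the statement, so Theorem~\ref{thm:pres X(S) second} already tells us that they generate $U(\mathsf{Mat}_\Lambda)$.

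It then remains to identify the relations. The underlying matroid of an object $X\in\mathsf{Mat}_\Lambda[\{e,f\}]$ is one of the four matroids on a two-element set: $c\oplus c$, $l\oplus l$, $c\oplus l$ (with either of its two element-labelings), or the uniform matroid $U_{1,2}$; to each we adjoin a color $\lambda$ on $e$ and a color $\mu$ on $f$. For the three non-uniform choices, each of the four one-element minors $X|e$, $X/e$, $X|f$, $X/f$ is of its expected type — for instance all four one-element minors of $c\oplus c$ are coloops — so the relation $[X|e][X/e]=[X|f][X/f]$ is one of $u_\lambda u_\mu=u_\mu u_\lambda$, $v_\lambda v_\mu=v_\mu v_\lambda$, $u_\lambda v_\mu=v_\mu u_\lambda$, all of which hold in any commutative monoid. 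For $U_{1,2}$, where $e$ and $f$ are parallel, one computes $\rk(\{e\})=\rk(\{f\})=1$ while in each contraction the remaining element becomes a loop; hence $X|e=u_\lambda$, $X/e=v_\mu$, $X|f=u_\mu$, $X/f=v_\lambda$, and the relation reads $u_\lambda v_\mu=u_\mu v_\lambda$. Ranging over all $\lambda,\mu\in\Lambda$, these are exactly the defining relations of the right-hand side, which proves the isomorphism.

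As a sanity check one can make the isomorphism explicit: the class of a $\Lambda$-colored matroid $(M,\lambda)$ maps to $\prod_{\nu\in\Lambda}u_\nu^{r_\nu}v_\nu^{s_\nu}$, where, after fixing a linear order on $E(M)$, $r_\nu$ (resp.\ $s_\nu$) counts the elements of color $\nu$ that do (resp.\ do not) increase the rank when adjoined to their predecessors; by Proposition~\ref{prop:pres X(S) first} this is well defined up to precisely the ambiguity absorbed by the relations $u_\lambda v_\mu=u_\mu v_\lambda$. The only real work in the argument is the degree-$2$ bookkeeping, and in particular verifying that $U_{1,2}$ contributes $u_\lambda v_\mu=u_\mu v_\lambda$ rather than a trivial relation; there is no conceptual difficulty beyond what already appears in Proposition~\ref{prop:UMat}.
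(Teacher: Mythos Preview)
Your proof is correct and follows exactly the same approach as the paper: invoke Theorem~\ref{thm:pres X(S) second}, note that the generators are the colored coloops and loops, and check case by case that among the two-element colored matroids only $U_{1,2}$ yields a nontrivial relation, namely $u_\lambda v_\mu=u_\mu v_\lambda$. The paper's version is simply terser, saying ``one readily checks that these are the only non-trivial relations'' where you spell out the four cases.
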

		
		\begin{proof}
		By Theorem \ref{thm:pres X(S) second}, the elements $u_\lambda$ and $v_\lambda$ generate $U(\mathsf{Mat}_\Lambda)$ and the relations that they satisfy come from colored matroid on two-element sets. The uniform matroid $U_{1,2}$ whose elements are colored with two colors $\lambda$ and $\mu$ gives rise to the relation $u_\lambda v_\mu=u_\mu v_\lambda$, and one readily checks that these are the only non-trivial relations.
		\end{proof}
		
		It is natural to embed $U(\mathsf{Mat}_\Lambda)$ in the monoid
		$$U'(\mathsf{Mat}_\Lambda) = \langle u,v \;;\; a_\lambda \; , \; \lambda\in\Lambda\rangle = U(\mathsf{Mat}) \times \langle a_\lambda \; , \; \lambda\in\Lambda\rangle $$
		by $u_\lambda\mapsto ua_\lambda$ and $v_\lambda\mapsto va_\lambda$. With this presentation, the class of a colored matroid $(M,\lambda)$ in $U(\mathsf{Mat}_\Lambda)$ is the monomial 
		$$u^{\rk(M)}v^{\cork(M)}\prod_{e\in E(M)}a_{\lambda(e)}\ . $$
		
		The universal Tutte character
		\begin{equation*}
		\begin{split}
		T^{\mathsf{Mat}_\Lambda}:\KK\mathsf{Mat}_\Lambda  \rightarrow  \KK[ u_{\lambda,i},& v_{\lambda,i} \;,\; \lambda\in\Lambda \; , i=1,2] \,/\, \langle u_{\lambda,i} v_{\mu,i} = u_{\mu,i} v_{\lambda,i}\rangle \\
		& \hookrightarrow \KK[u_1,v_1,u_2,v_2\;;\;a_{\lambda,1},a_{\lambda,2} \; , \;\lambda\in\Lambda] 
		\end{split}
		\end{equation*}
		is thus given by 
		\begin{equation*}
		\begin{split}
		T&^{\mathsf{Mat}_\Lambda}(M,\lambda)\\
		&=\sum_{A\subseteq E(M)}\left(\prod_{e\in A}a_{\lambda(e),1}\right)\left(\prod_{e\notin A}a_{\lambda(e),2}\right) u_1^{\rk(M|A)} \, v_1^{\cork(M|A)}\, u_2^{\rk(M/A)} \, v_2^{\cork(M/A)}\ .
		\end{split}
		\end{equation*}
		
		By specializing to $(u_1,v_1,u_2,v_2)=(1,y-1,x-1,1)$ and $(a_{\lambda,1},a_{\lambda,2})=(a_{\lambda},1)$, we obtain an invariant known as the \emph{colored Tutte polynomial}:
		$$\mathfrak{T}_{(M,\lambda)}(x,y)=\sum_{A\subseteq E(M)} \left(\prod_{e\in A}a_{\lambda(e)}\right) (x-1)^{\rk(M)-\rk(A)}(y-1)^{|A|-\rk(A)}\ , $$
		We note that this is formally the same thing as the multivariate Tutte polynomial \eqref{eq:multivariateTutte}, although in a slightly different setting. 
		
	\subsection{Comparison with Bollob\'{a}s--Riordan}\label{sec:BR}
	
		In \cite{bollobasriordancoloured} Bollob\'{a}s and Riordan consider morphisms $\Phi:\mathsf{Mat}_\Lambda\rightarrow R$, with $R$ a ring, which take value $1$ on the empty matroid and satisfy a deletion-contraction recurrence formula of the form
		\begin{equation}\label{eq:deletion contraction colored}
		\Phi(M)=N_1(M\backslash e^\c)\,\Phi(M/e) + N_2(M/e^\c)\,\Phi(M\backslash e)\ ,
		\end{equation}
		where $N_1,N_2:\mathsf{Mat}_\Lambda[\{*\}]\rightarrow R$ are any two functions. For $\lambda\in\Lambda$ let us denote by $c_\lambda$ (resp. $l_\lambda$) a coloop (resp. a loop) with color $\lambda$. For $i=1,2$ we set $u_{\lambda,i}\doteq N_i(c_\lambda)$ and $v_{\lambda,i}\doteq N_i(l_{\lambda})$. Bollob\'{a}s and Riordan use a different set of variables
		 $$(x_\lambda,y_\lambda,X_\lambda,Y_\lambda)=
		 (u_{\lambda,1},v_{\lambda,2},u_{\lambda,1}+u_{\lambda,2},v_{\lambda,1}+v_{\lambda,2})\ . $$ 
		 The main result of \cite{bollobasriordancoloured}, Theorem 2, gives necessary and sufficient conditions on these variables 
		 so that the deletion-contraction recurrence formula gives a well-defined morphism $\Phi$. Here we give a different proof of this result that does not require a basis activities expansion and only uses our result on general deletion-contraction recurrences (Proposition \ref{prop:general recurrence}).
	
		\begin{thm}
		The deletion-contraction recurrence \eqref{eq:deletion contraction colored} gives a well-defined morphism $\Phi:\mathsf{Mat}_\Lambda\rightarrow R$ if and only if the following equalities hold in $R$ for all $\lambda,\mu,\nu\in\Lambda$:
		\begin{equation}\label{eq:BRfirst}
		u_{\lambda,1}v_{\mu,1}-u_{\mu,1}v_{\lambda,1}=u_{\lambda,2}v_{\mu,2}-u_{\mu,2}v_{\lambda,2} \ .
		\end{equation}
		\begin{equation}\label{eq:BRsecond}
		(u_{\lambda,1}v_{\mu,1}-u_{\mu,1}v_{\lambda,1})(v_{\nu,1}+v_{\nu,2})=0 \ .
		\end{equation}
		\begin{equation}\label{eq:BRthird}
		(u_{\lambda,2}v_{\mu,2}-u_{\mu,2}v_{\lambda,2})(u_{\nu,1}+u_{\nu,2})=0 \ .
		\end{equation}
		\end{thm}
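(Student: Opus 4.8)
The plan is to invoke Proposition~\ref{prop:general recurrence} for $\mathsf{S}=\mathsf{Mat}_\Lambda$ and run an induction on the cardinality $n$ of the ground set. The first step is to make the two coefficients appearing in \eqref{eq:general recurrence} explicit for colored matroids. Fix $X\in\mathsf{Mat}_\Lambda[E]$ and $e,f\in E$ with colors $\lambda=\lambda(e)$, $\mu=\lambda(f)$. Then $X_{e,f}$ (respectively $X^{e,f}$) is one of the colored matroids on $\{e,f\}$ in which each of $e,f$ is a loop or a coloop, or else the rank-one uniform matroid $U_{1,2}$; a short direct computation of the restrictions and contractions to singletons shows that the first coefficient $N_1(X_{e,f}|e)N_1(X_{e,f}/e)-N_1(X_{e,f}|f)N_1(X_{e,f}/f)$ vanishes except when $X_{e,f}=U_{1,2}$, in which case it equals $\delta_1\doteq u_{\lambda,1}v_{\mu,1}-u_{\mu,1}v_{\lambda,1}$, and similarly the second coefficient vanishes except when $X^{e,f}=U_{1,2}$, in which case it equals $\delta_2\doteq u_{\lambda,2}v_{\mu,2}-u_{\mu,2}v_{\lambda,2}$. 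Thus \eqref{eq:general recurrence} is automatic unless one of the two-element minors is $U_{1,2}$; note also that if $\lambda=\mu$ both $\delta_1$ and $\delta_2$ vanish.

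For necessity I would exhibit colored matroids realizing each identity. With $E=\{e,f\}$ and $X=U_{1,2}$ colored $\lambda,\mu$, both minors equal $U_{1,2}$ and $\Phi(X/\{e,f\})=\Phi(X\backslash\{e,f\})=1$, so \eqref{eq:general recurrence} reduces to $\delta_1=\delta_2$, i.e.\ \eqref{eq:BRfirst}. With $E=\{e,f,g\}$ and $X=U_{1,3}$ colored $\lambda,\mu,\nu$, one checks $X_{e,f}=U_{1,2}$ but $X^{e,f}$ has a loop, while $X/\{e,f\}$ is a loop colored $\nu$ whose $\Phi$-value is $v_{\nu,1}+v_{\nu,2}$ by the base case of the recurrence; so \eqref{eq:general recurrence} reads $\delta_1(v_{\nu,1}+v_{\nu,2})=0$, which is \eqref{eq:BRsecond}. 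Dually $X=U_{2,3}$ gives $X^{e,f}=U_{1,2}$, $X_{e,f}$ with no loop, and $X\backslash\{e,f\}$ a coloop colored $\nu$ with $\Phi$-value $u_{\nu,1}+u_{\nu,2}$, yielding \eqref{eq:BRthird}.

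For sufficiency I would prove by induction on $n$ that $\Phi$ is well-defined up to cardinality $n$; the cases $n\leq 1$ are automatic and $n=2$ follows from \eqref{eq:BRfirst} by the computation above. Two auxiliary facts drive the step. First, the three hypotheses imply that $\delta_1=\delta_2$ annihilates every element $u_{\gamma,1}+u_{\gamma,2}$ and $v_{\gamma,1}+v_{\gamma,2}$ of $R$: the $v$-case is \eqref{eq:BRsecond} directly, and the $u$-case is \eqref{eq:BRthird} after rewriting $\delta_1$ as $\delta_2$ via \eqref{eq:BRfirst}. Second, for any colored matroid $Z$ with nonempty ground set on which $\Phi$ is already unambiguously defined, $\Phi(Z)$ lies in the ideal $I\subseteq R$ generated by all $u_{\gamma,1}+u_{\gamma,2}$ and $v_{\gamma,1}+v_{\gamma,2}$; this is an easy induction on $|E(Z)|$ using the recurrence, pulling out a factor $u_{\gamma,1}+u_{\gamma,2}$ or $v_{\gamma,1}+v_{\gamma,2}$ when $Z$ has a coloop or loop and otherwise writing $\Phi(Z)$ as a combination of $\Phi$ of two minors, each still with nonempty ground set.

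Granting these, fix $X$ of cardinality $n\geq 3$ and $e,f\in E$; we may assume $\lambda\neq\mu$. If $X_{e,f}=U_{1,2}$ and $X^{e,f}=U_{1,2}$, then $e$ and $f$ are parallel in $X$, so $X/\{e,f\}=(X\backslash f)/e$ and $X\backslash\{e,f\}=(X\backslash f)\backslash e$, and the condition $X^{e,f}=U_{1,2}$ is exactly that $e$ is a coloop in $X\backslash f$; hence $\Phi(X/\{e,f\})=\Phi(X\backslash\{e,f\})$ and \eqref{eq:general recurrence} becomes $(\delta_1-\delta_2)\Phi(X/\{e,f\})=0$, true by \eqref{eq:BRfirst}. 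If exactly one of the two minors is $U_{1,2}$, then \eqref{eq:general recurrence} asserts that $\delta_1$ (or $\delta_2$) annihilates $\Phi$ of one of $X/\{e,f\}$, $X\backslash\{e,f\}$; since $n\geq 3$ that minor has nonempty ground set, so its $\Phi$-value lies in $I$ and is killed by $\delta_1=\delta_2$. Otherwise both coefficients vanish. This closes the induction; functoriality of $\Phi$ under relabelings is immediate since $N_1,N_2$ depend only on isomorphism types.

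The main obstacle I anticipate is the combinatorial bookkeeping in the inductive step: translating ``$X_{e,f}=U_{1,2}$'' and ``$X^{e,f}=U_{1,2}$'' into the statements that $e,f$ are parallel, respectively in series, in $X$; verifying the minor identities $X/\{e,f\}=(X\backslash f)/e$ and $X\backslash\{e,f\}=(X\backslash f)\backslash e$ under the parallel hypothesis; and pinning down that the two minors are simultaneously $U_{1,2}$ precisely when $e$ is a coloop in $X\backslash f$. The ideal-membership observation is what makes the rest routine, since it reduces every remaining instance of \eqref{eq:general recurrence} to $\delta_1$ acting on $I$, for which no detailed control of $\Phi$ on large colored matroids is needed.
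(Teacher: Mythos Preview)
Your proof is correct and follows essentially the same route as the paper's: both invoke Proposition~\ref{prop:general recurrence}, extract the necessary conditions from $U_{1,2}$, $U_{1,3}$, $U_{2,3}$, and handle sufficiency via the same three-case analysis together with the ideal-membership observation for $\Phi$ on nonempty ground sets. The only cosmetic difference is in the case where both two-element minors are $U_{1,2}$: the paper observes more directly that then $\{e,f\}$ is a $U_{1,2}$ direct summand of $M$, whence $M/\{e,f\}=M\backslash\{e,f\}$ immediately, in place of your parallel/coloop computation reaching the same conclusion.
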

		
		\begin{proof}
		We use the general criterion given in Proposition \ref{prop:general recurrence}. The morphism $\Phi$ is obviously well-defined in cardinality $1$ and satisfies $\Phi(c_\lambda)=u_{\lambda,1}+u_{\lambda,2}$ and $\Phi(l_\lambda)=v_{\lambda,1}+v_{\lambda,2}$. In cardinality $2$ the only colored matroid that gives a non-trivial relation is $U_{1,2}$ with colors $\lambda$ and $\mu$, and relation \eqref{eq:general recurrence} is exactly \eqref{eq:BRfirst}. In cardinality $3$, applying relation \eqref{eq:general recurrence} to the matroid $U_{1,3}$ with colors $\lambda$, $\mu$, $\nu$ gives rise to relation \eqref{eq:BRsecond}, and applying it to the matroid $U_{2,3}$ with colors $\lambda$, $\mu$, $\nu$ gives rise to relation \eqref{eq:BRthird}. 
		
		We now need to prove that these three relations ensure that $\Phi$ is well-defined in any cardinality. Let $M$ be any colored matroid on a ground set $E$ of cardinality $\geq 2$, let us fix $e,f\in E$ and let $\lambda$ and $\mu$ denote the colors of $e$ and $f$ respectively. Relation \eqref{eq:general recurrence} is trivial if $M_{e,f}$ and $M^{e,f}$ have an underlying matroid different from $U_{1,2}$. We then have three cases to consider.
		\begin{enumerate}
		\item If $M_{e,f}$ and $M^{e,f}$ have $U_{1,2}$ as their underlying matroid, then $M_{e,f}=M^{e,f}$ and $M$ is necessarily a direct sum $M=M'\oplus M''$ with $E(M')=\{e,f\}$ and $E(M'')=\{e,f\}^\c$. This implies that $M/\{e,f\}=M\backslash \{e,f\}=M''$ and relation \eqref{eq:general recurrence} reads
		$$(u_{\lambda,1}v_{\mu,1}-u_{\mu,1}v_{\lambda,1}) \Phi(M'')=(u_{\lambda,2}v_{\mu,2}-u_{\mu,2}v_{\lambda,2})\Phi(M'')\ ,$$
		which is a consequence of \eqref{eq:BRfirst}.
		\item If $M_{e,f}$ has $U_{1,2}$ as its underlying matroid and $M^{e,f}$ does not, then relation \eqref{eq:general recurrence} reads
		$$(u_{\lambda,1}v_{\mu,1}-u_{\mu,1}v_{\lambda,1}) \Phi(M/\{e,f\})=0\ .$$
		Now $M/\{e,f\}$ cannot be the empty matroid, so by construction, $\Phi(M/\{e,f\})$ is in the ideal of $R$ generated by the elements $u_{\nu,1}+u_{\nu,2}$ and $v_{\nu,1}+v_{\nu,2}$ for $\nu\in\Lambda$. Then the above relation is a consequence of relations \eqref{eq:BRfirst}, \eqref{eq:BRsecond} and \eqref{eq:BRthird}.
		\item If $M^{e,f}$ has $U_{1,2}$ as its underlying matroid and $M_{e,f}$ does not then a dual argument applies.
		\end{enumerate}
		In all three cases, the relation given by Proposition \ref{prop:general recurrence} is a consequence of relations \eqref{eq:BRfirst}, \eqref{eq:BRsecond} and \eqref{eq:BRthird}, and we are done.
		\end{proof}
		
		In the variables used by Bollob\'{a}s and Riordan, the relations from the theorem read (compare with \cite[Theorem 2]{bollobasriordancoloured}):
		$$x_\lambda Y_\mu-x_\mu Y_\lambda=X_\lambda y_\mu - X_\mu y_\lambda \ ,$$
		$$(x_\lambda Y_\mu -x_\mu Y_\lambda -x_\lambda y_\mu +x_\mu y_\lambda)Y_\nu=0\ ,$$
		$$(X_\lambda y_\mu - X_\mu y_\lambda -x_\lambda y_\mu +x_\mu y_\lambda) X_\nu =0 \ .$$
		
		The condition that $u_{\lambda,1}+u_{\lambda,2}=X_\lambda$ and $v_{\lambda,1}+v_{\lambda,2}=Y_\lambda$ are not zero divisors in $R$ for all $\lambda$ is natural, since they are the values taken by $\Phi$ on $c_\lambda$ and $l_\lambda$, respectively. If $R$ is a domain then this means that these two quantities are non-zero. If this condition is satisfied then equations \eqref{eq:BRfirst}, \eqref{eq:BRsecond} and \eqref{eq:BRthird} are equivalent to
		\begin{equation}\label{eq:uvequation norms}
		u_{\lambda,1}v_{\mu,1}-u_{\mu,1}v_{\lambda,1}=u_{\lambda,2}v_{\mu,2}-u_{\mu,2}v_{\lambda,2}=0 \ ,
		\end{equation}
		or in the variables used by Bollob\'{as} and Riordan (compare with \cite[Corollary 3]{bollobasriordancoloured}):
		$$x_\lambda Y_\mu - x_\mu Y_\lambda = X_\lambda y_\mu - X_\mu y_\lambda = x_\lambda y_\mu - x_\mu y_\lambda\ .$$
		We note that equation \eqref{eq:uvequation norms} amounts to saying that $N_1$ and $N_2$ are norms. 
		This is another argument for the naturality of our use of norms in our deletion-contraction recurrence formula.

\section{Arithmetic matroids}\label{sec:arith}
\emph{Arithmetic matroids} \cite{Md1, BM}
are one of several recently introduced kinds of decorated matroid.
Matroids encode information about the topology of hyperplane arrangements:
notably the graded dimension of the cohomology of a complex hyperplane arrangement complement is an evaluation of the Tutte polynomial.
Arithmetic matroids were introduced in pursuit of a generalization of this theory to arrangements of codimension~1 subtori in a torus,
with an \emph{arithmetic Tutte polynomial} answering to the Tutte polynomial.
Thus where matroids only record linear dependence among vectors (hyperplane normals),
arithmetic matroids furthermore record \emph{multiplicities} encoding the arithmetic relations of elements in a $\mathbb{Z}$-module (torus characters).
Beyond toric arrangements, the arithmetic Tutte polynomial finds application to vector partition functions, lattice points in zonotopes, graphs and CW-complexes.

As special cases of our universal convolution formula, 
we will recover a recent convolution formula of Backman and Lenz \cite{backmanlenz} for arithmetic matroids,
from which the Kook--Reiner--Stanton--Etienne--Las Vergnas formula for matroids is obtained by a forgetful morphism of minors systems.
Furthermore, from a slight modification of this bialgebra we obtain a more general formula, which expresses the arithmetic Tutte polynomial of the \emph{product} of two arithmetic matroids (as defined in \cite{delucchimoci}) as the convolution of the arithmetic Tutte polynomials of its factors (see Theorem~\ref{thm:arith conv}).

	\subsection{The minors system of arithmetic matroids}
		
		Our presentation of arithmetic matroids follows mostly \cite{BM},
		though our notation deviates in that we name arithmetic matroids as pairs~$(M,m)$
		rather than giving them simple names like~$M$. 

		\begin{defi} A {\em molecule} in a matroid $M=(E,\rk)$ is a triple $\alpha:=(R,F,T)$ of disjoint subsets of $E$ such that, for every $A\subseteq E $ with $R\subseteq A \subseteq R\cup F\cup T$,
	  	$$\rk(A) = \rk(R) + \vert A\cap F \vert.$$
	  	\end{defi}
	  	
		For those familiar with matroid theory, a molecule of~$M$ is the data indexing a minor $(M|R\cup F\cup T)/R$ of~$M$ consisting only of coloops and loops, together with its partition $F\sqcup T$ into the coloops and loops respectively. Notice that if $\alpha=(R,F,T)$ is a molecule, then so is the triple $(R',F',T')$ for every $F'\subseteq F$, every $T'\subseteq T$ and every $R'$ with $R\subseteq R'\subseteq (R\cup F\cup T)\setminus (F'\cup T')$.
	
		\begin{defi}
		\label{def_AM}
		 An {\em arithmetic matroid} is a pair $(M,m)$ where $M=(E,\rk)$ is a matroid, and $m: 2^E \to \mathbb Z_{>0}$ is a function (the \emph{multiplicity function}) satisfying the following axioms.
		 \begin{itemize}
 		 \item[(A1)] For all $A\subseteq E$ and all $e\in E$, if $\rk(A\cup e)> \rk(A)$, then $m(A)$ divides $m(A\cup e)$; if $\rk(A\cup e)= \rk(A)$, then $m(A\cup e)$ divides $m(A)$.
  		 \item[(A2)]  For every molecule $\alpha=(R,F,T)$ of $(E,\rk)$
 		$$m(R)\,m(R\cup F\cup T) = m(R\cup F)\,m(R\cup T).$$
 		\item[(P)] For every molecule $\alpha=(R,F,T)$ of $(E,\rk)$,
 		$$ (-1)^{T}\sum_{R\subseteq A \subseteq R\cup F\cup T} (-1)^{\vert (R\cup F\cup T) \setminus A \vert}\, m(A)\geq 0$$
  		\end{itemize}
		\end{defi}

		The \emph{restriction} of $(M,m)$ to $A\subseteq E$, denoted $(M,m)|A$, has underlying matroid the restriction $M| A$ and multiplicity function the restriction of $m$. The \emph{contraction} of $(M,m)$ by a subset $A\subseteq E$, denoted $(M,m)/A$, has underlying matroid the contraction $M/A$ and multiplicity function $\overline m$ given by $\overline m(B) \doteq m(B \cup A)$ for $B \subseteq  E(M) \setminus A$. The \emph{direct sum} of two arithmetic matroids $(M,m)$ and $(M',m')$ is the arithmetic matroid $(M\oplus M',m\oplus m')$, where $M\oplus M'$ is the direct sum of matroids and the multiplicity function is defined by 
		$$(m\oplus m')(A\sqcup A') = m(A)\,m'(A').$$
	
		\begin{rem}\label{rem:matroids are normalized}
		Matroid axiom (R1) implies that $\rk(\varnothing)=0$ in any matroid.
		This can be viewed as a ``normalization'' condition which must be taken into account in the definition of operations like contractions:
		it is the reason for the $-\rk(A)$ summand in the definition of matroid contraction.
		A na\"ive inventor of matroid theory might have chosen to omit this normalization, producing a disconnected (multiplicative) minors system.
		Had this been done, the familiar matroid Hopf algebra would be recovered from the resulting bialgebra by Remark~\ref{rem:Hopf quotient}.
		
		It is notable that the definition of arithmetic matroids lacks a similar normalization for multiplicities.
		Had one been included, say by dividing multiplicities through by a scalar to force $m(\varnothing)=1$,
		a connected minors system and Hopf algebra would be produced, 
		rather than the disconnected minors system and bialgebra below.
		But, as we will see, the gain of connectedness would incur the great cost of no longer being able to recover the arithmetic Tutte polynomial from the universal one.
		\end{rem}
		
		Equipped with these notions of restriction, contraction and direct sum, arithmetic matroids form a multiplicative minors system that we denote $\mathsf{AMat}$. An object in $\mathsf{AMat}[\varnothing]$ is the datum of an integer $m=m(\varnothing)\in\mathbb{Z}_{>0}$. Thus, the minors system of arithmetic matroids is not connected and we have an isomorphism of monoids 
		$$\mathsf{AMat}[\varnothing] \simeq (\mathbb{Z}_{>0},\times)\simeq \prod_{p \textnormal{ prime}} a_p^{\mathbb{N}}\ .$$
		We denote by $[m]=\prod_pa_p^{v_p(m)}$ the class of an integer $m\in\mathbb{Z}_{>0}$ in the monoid ring $\KK[\mathbb{Z}_{>0}]\simeq\KK[\{a_p\}]$.\medskip
		
		There is a natural inclusion $\mathsf{Mat}\hookrightarrow\mathsf{AMat}$ whose image consists of the arithmetic matroids with constant multiplicity $1$. It has a section $\mathsf{AMat}\twoheadrightarrow\mathsf{Mat}$ which forgets about the multiplicity function.
		
	\subsection{The universal Tutte character for arithmetic matroids}
	
		The Grothendieck monoid of $\mathsf{AMat}$ is easy to compute.
	
		\begin{prop}
		There is an embedding of monoids
		$$U(\mathsf{AMat}) \hookrightarrow \mathbb{Q}_{>0} \times u^{\mathbb{N}}v^{\mathbb{N}} \;\; ,\; \; [M,m]\mapsto \frac{m(E(M))}{m(\varnothing)} \, u^{\rk(M)}v^{\cork(M)} \ ,$$
		whose image is 
		\[\{1\}\cup\{au^i:a\in\mathbb Z_{\geq1},i>0\}\cup\{\tfrac{1}{a}v^j:a\in\mathbb Z_{\geq1},j>0\}\cup\{qu^iv^j:q\in\mathbb Q_{>0},i,j>0\}.\]
		\end{prop}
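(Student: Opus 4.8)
The plan is to apply the presentation of $U(\mathsf{AMat})$ furnished by Theorem~\ref{thm:pres X(S) second} and check, first, that the proposed assignment $[M,m]\mapsto \frac{m(E(M))}{m(\varnothing)}\,u^{\rk(M)}v^{\cork(M)}$ is a well-defined morphism of monoids into $\mathbb{Q}_{>0}\times u^{\mathbb N}v^{\mathbb N}$, and second, that it is injective with the stated image. For well-definedness we must verify that the right-hand side is multiplicative under restriction and contraction, i.e.\ that for $A\subseteq E(M)$ we have $\frac{m(E)}{m(\varnothing)}=\frac{m(A)}{m(\varnothing)}\cdot\frac{m(E)}{m(A)}$ (which is a triviality) together with the matroid identities $\rk(M)=\rk(M|A)+\rk(M/A)$ and $\cork(M)=\cork(M|A)+\cork(M/A)$ already used in Proposition~\ref{prop:UMat}. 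So the map $\psi:U(\mathsf{AMat})\to \mathbb{Q}_{>0}\times u^{\mathbb N}v^{\mathbb N}$ exists.

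Next I would identify the generators and relations via Theorem~\ref{thm:pres X(S) second}. The arithmetic matroids on a one-element set are: a coloop with multiplicity function $m(\varnothing)=a$, $m(\{e\})=b$ with $a\mid b$; and a loop with $m(\varnothing)=a$, $m(\{e\})=b$ with $b\mid a$. (Axiom (A1) forces the divisibilities; (A2) and (P) are vacuous on a singleton since the only molecules are trivial.) Under $\psi$ a coloop of the first kind maps to $\frac{b}{a}u$ with $\frac{b}{a}\in\mathbb Z_{\geq 1}$, and a loop maps to $\frac{b}{a}v$ with $\frac{b}{a}\in\mathbb Q_{>0}$ of the form $1/(\text{integer})$. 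These elements visibly generate the claimed image monoid, and $\psi$ restricted to generators has an evident set-theoretic section picking a canonical arithmetic matroid for each generator (e.g.\ $au\mapsto$ the coloop with $m(\varnothing)=1, m(\{e\})=a$; $\tfrac1a v\mapsto$ the loop with $m(\varnothing)=a, m(\{e\})=1$). The main work is then the relation check: the relations in $U(\mathsf{AMat})$ all come from arithmetic matroids on two-element sets $\{e,f\}$, giving equations $[X|e][X/e]=[X|f][X/f]$. One runs through the finitely many combinatorial types of underlying matroid on $\{e,f\}$ — the three direct sums $c\oplus c$, $c\oplus l$, $l\oplus l$, and $U_{1,2}$ — decorated with an arbitrary multiplicity function satisfying (A1), (A2), (P). For the direct sums, axiom (A2) applied to the molecule $(\varnothing, F, T)$ with $F,T$ the coloop/loop parts gives exactly the factorization $m(\varnothing)m(\{e,f\})=m(\{e\})m(\{f\})$, which is precisely what is needed for $\psi$ to respect the relation (the $u,v$ exponents match by the matroid computation as in Proposition~\ref{prop:UMat}); for $U_{1,2}$ there are no nontrivial molecules containing both elements, the relation on exponents is $uv=uv$, and on multiplicities one checks directly that the relation imposed is again consistent (the two sides of $[X|e][X/e]=[X|f][X/f]$ evaluate to $\frac{m(\{e\})}{m(\varnothing)}\cdot\frac{m(\{e,f\})}{m(\{e\})}\,uv$ and similarly with $f$, both equal to $\frac{m(\{e,f\})}{m(\varnothing)}\,uv$). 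Hence $\psi$ descends to a well-defined morphism, and since it sends the generators of $U(\mathsf{AMat})$ bijectively onto generators of the image monoid and respects all relations in both directions, it is an isomorphism onto its image.

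Finally, to pin down the image precisely as
\[\{1\}\cup\{au^i:a\in\mathbb Z_{\geq1},i>0\}\cup\{\tfrac{1}{a}v^j:a\in\mathbb Z_{\geq1},j>0\}\cup\{qu^iv^j:q\in\mathbb Q_{>0},i,j>0\},\]
one argues as in the proof of Proposition~\ref{prop:X(SF)}: a product of coloop-generators $au$ ($a\in\mathbb Z_{\geq 1}$) has the form $(\prod a_k)\,u^i$ with $i>0$ and numerator in $\mathbb Z_{\geq 1}$; a product of loop-generators $\tfrac1a v$ has the form $\tfrac1{\prod a_k}v^j$; a mixed product gives $q\,u^iv^j$ with $q\in\mathbb Q_{>0}$ arbitrary and $i,j>0$ (any positive rational is a product of integers and reciprocals of integers); and the empty product is $1$. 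Conversely each such element is realized: $au^i$ by the direct sum of $i$ coloops one of which carries multiplicity jump $a$, and so on. The main obstacle is bookkeeping in the two-element relation check — in particular making sure that every decorated type on $\{e,f\}$ (not just the generic $U_{1,2}$ but all the degenerate direct-sum cases, and with the full freedom in $m$ allowed by (A1)) yields a relation already satisfied in $\mathbb{Q}_{>0}\times u^{\mathbb N}v^{\mathbb N}$, which is where axiom (A2) does the essential work — but no step is conceptually hard once the presentation of Theorem~\ref{thm:pres X(S) second} is in hand.
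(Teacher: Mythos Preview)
Your well-definedness check and image computation are fine, but the injectivity argument has a genuine gap. What you actually verify is that every relation in $U(\mathsf{AMat})$ coming from a two-element arithmetic matroid maps to an equality in $\mathbb{Q}_{>0}\times u^{\mathbb N}v^{\mathbb N}$; that is just well-definedness of~$\psi$ again. Your sentence ``sends the generators bijectively onto generators of the image monoid and respects all relations in both directions'' asserts the converse direction without doing it: a bijection on generating sets does not make a monoid homomorphism injective unless every relation among the \emph{images} of the generators is already a relation in the source. You never check this.

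Concretely, the submonoid of $\mathbb{Q}_{>0}\times u^{\mathbb N}v^{\mathbb N}$ generated by $\{au:a\geq 1\}\cup\{\tfrac1a v:a\geq 1\}$ satisfies relations such as
\[
(au)(bu)=(abu)(1\cdot u),\qquad (\tfrac1a v)(\tfrac1b v)=(\tfrac1{ab}v)(1\cdot v),\qquad (au)(\tfrac1a v)=(1\cdot u)(1\cdot v),
\]
and injectivity requires showing $[c_a][c_b]=[c_{ab}][c_1]$, $[l_a][l_b]=[l_{ab}][l_1]$, $[c_a][l_a]=[c_1][l_1]$ in $U(\mathsf{AMat})$. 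This is exactly what the paper does: it exhibits, for each of these three families, an explicit arithmetic matroid on a two-element set (arising from a list of elements in a suitable abelian group) whose two restriction/contraction factorizations yield the desired equality. Your proposal contains no such constructions; in particular, axiom~(A2) on direct-sum molecules, which you invoke, does not by itself produce the relation $[c_a][c_b]=[c_{ab}][c_1]$, since the direct sum $c_a\oplus c_b$ gives only the trivial relation $[c_a][c_b]=[c_b][c_a]$.
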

		
		\begin{proof}
		The morphism, that we denote by $\alpha$, is easily seen to be well-defined, and we want to prove that it is injective. We note that $U(\mathsf{AMat})$ is generated by the classes of the arithmetic matroids $c_a$, the coloop with empty multiplicity $1$ and total multiplicity $a$, and $l_a$, the loop with empty multiplicity $a$ and total multiplicity $1$, for $a\in\mathbb{Z}_{\geq 1}$. We introduce the notation
		$$\gamma_a = \alpha([c_a]) = au \;\;  , \;\; \lambda_a = \alpha([l_a]) = \tfrac{1}{a}v\ .$$
		So the image of $\alpha$ is generated by such classes.
		If $qu^iv^j$ is an element of $\im\alpha$ such that $q\not\in\mathbb Z$, then any expression for it as a product of generators must contain a factor $\lambda_a$, implying $j>0$;
		dually, if $q^{-1}\not\in\mathbb Z$, we conclude $i>0$.
		Conversely, for $a,b\in\mathbb Z_{\geq1}$ and $i,j>0$ we see that $au^i = \gamma_a(\gamma_1)^{i-1}$ and $1/b\cdot v^j=\lambda_b(\lambda_1)^{j-1}$ are in $\im\alpha$, as is their product,
		which proves the proposition's claim about the image.
		
		One readily verifies that the relations satisfied by the elements $\gamma_a$ and $\lambda_a$ in the monoid $\mathbb{Q}_{>0}\times u^{\mathbb{N}}v^{\mathbb{N}}$ are generated by the following relations:
		\begin{enumerate}
		\item $\gamma_a\lambda_a=\gamma_1\lambda_1$;
		\item $\lambda_a\lambda_b=\lambda_{ab}\lambda_1$;
		\item $\gamma_a\gamma_b=\gamma_{ab}\gamma_1$.
		\end{enumerate}
		What we have to do is to prove that the corresponding relations are satisfied by $[c_a]$ and $[l_a]$ in $U(\mathsf{AMat})$. 
		\begin{enumerate}
		\item Let $(M,m)$ be the arithmetic matroid induced by the list of vectors $e=1, f=a$ in the abelian group $\mathbb{Z}$. Then $M|e=c_1$, $M/e=l_1$, $M|f=c_a$ and $M/f=l_a$. We thus get $[c_a][l_a]=[(M,m)]=[c_1][l_1]$.
		\item Let $(M,m)$ be the arithmetic matroid induced by the list of vectors $e=1$, $f=a$ in the abelian group $\mathbb{Z}/ab\mathbb{Z}$. Then $M|e=l_{ab}$, $M/e=l_1$, $M|f=(\varnothing,a)\oplus l_b$ and $M/f=l_a$. We thus get $[l_a][l_b]=[(M,m)]=[l_{ab}][l_1]$.
		\item This is dual to the previous case.\qedhere
		\end{enumerate}
		\end{proof}
		
		The corresponding universal Tutte character
		\begin{equation*}
		\begin{split}
		T^{\mathsf{AMat}}:\KK\mathsf{AMat}\rightarrow \KK[&U(\mathsf{AMat})\times\mathsf{AMat}[\varnothing]\times U(\mathsf{AMat})] \\
		& \hookrightarrow \KK[\mathbb{Q}_{>0}\times\mathbb{Z}_{>0}\times\mathbb{Q}_{>0}][u_1,v_1,u_2,v_2]
		\end{split}
		\end{equation*}
		is thus given by:
		\begin{equation*}
		\begin{split}
		T&^{\mathsf{AMat}}(M,m)\\
		&=\sum_{A\subseteq E(M)}\left[\frac{m(A)}{m(\varnothing)}\,,m(A)\,,\frac{m(E(M))}{m(A)}\right] u_1^{\rk(M|A)}v_1^{\cork(M|A)}u_2^{\rk(M/A)}v_2^{\cork(M/A)}\ .
		\end{split}
		\end{equation*}
		The coefficient inside the square brackets contains the information of the multiplicity $m(A)$ along with the empty multiplicity $m(\varnothing)$ and the total multiplicity $m(E(M))$. It is natural to forget about these two last pieces of information which are independent of $A$ and only retain the information of $m(A)\in\mathbb{Z}_{>0}$. Regarding the variables $u_1$, $v_1$, $u_2$, $v_2$, the story is the same as in the case of matroids and to be consistent with that case we work with the same conventional specialization. In other words, we work after the specialization:
		$$ \KK[\mathbb{Q}_{>0}\times\mathbb{Z}_{>0}\times\mathbb{Q}_{>0}][u_1,v_1,u_2,v_2] \longrightarrow \KK[\mathbb{Z}_{>0}][x,y]$$
		induced by the projection $\mathbb{Q}_{>0}\times\mathbb{Z}_{>0}\times\mathbb{Q}_{>0}\rightarrow \mathbb{Z}_{>0}$ and by $(u_1,v_1,u_2,v_2)=(1,y-1,x-1,1)$. To sum things up, we are looking at the Tutte character with values in the algebra $\KK[\mathbb{Z}_{>0}][x,y]$ corresponding to the norms $N_1(M,m)=(y-1)^{\cork(M)}$ and $N_2(M,m)=(x-1)^{\rk(M)}$, and to the twist map $\tau(\varnothing,m)=[m]$. The following definition gives the explicit formula for it.
		
		\begin{defi}
		The \emph{universal arithmetic Tutte polynomial} of an arithmetic matroid $(M,m)$ is the polynomial in $\KK[\mathbb{Z}_{>0}][x,y]$ defined as
		$$\mathfrak{M}^{\,\mathrm{uni}}_{(M,m)}(x,y)=\sum_{A\subseteq E(M)} [m(A)]\,(x-1)^{\rk(M)-\rk(A)}(y-1)^{|A|-\rk(A)} \ .$$
		\end{defi}
		
		Strictly speaking, this is not a reduced Tutte character in the sense of Section \ref{ssec:reduced} since we used the twist map to forget more information on the norms side. However, the universal Tutte character $T^{\mathsf{AMat}}(M,m)$ can be recovered from the universal arithmetic Tutte polynomial $\mathfrak{M}^{\,\mathrm{uni}}_{(M,m)}(x,y)$ by a change of variables and multiplication by a pre-factor. We leave the task of writing a precise formula to the interested reader.\medskip
		
		The universal arithmetic Tutte polynomial can be computed recursively using the same deletion-contraction recurrence formula as the Tutte polynomial, with the base case $\mathfrak{M}^{\,\mathrm{uni}}_{(\varnothing,m)}(x,y)=[m]$.
		
	\subsection{Specializations}
	
		Fix $\KK=\mathbb{Z}$ for simplicity. We describe notable specializations of the universal arithmetic Tutte polynomial.

		\subsubsection{The Tutte polynomial}
		The morphism of rings $\mathbb{Z}[\mathbb{Z}_{>0}]	\rightarrow \mathbb{Z}$ induced by the morphism of monoids $\mathbb{Z}_{>0}\rightarrow 1\hookrightarrow \mathbb{Z}$ sends $a_p$ to $1$ for every prime $p$. By applying it to the coefficients of $\mathfrak{M}^{\,\mathrm{uni}}_{(M,m)}(x,y)$ one recovers the classical Tutte polynomial $\mathfrak{T}_{M}(x,y)$ of the underlying matroid.

		\subsubsection{The arithmetic Tutte polynomial}
		 The morphism of rings $\mathbb{Z}[\mathbb{Z}_{>0}]\rightarrow \mathbb{Z}$ induced by the morphism of monoids $\mathbb{Z}_{>0}\hookrightarrow \mathbb{Z}$ sends $a_p$ to $p$ for every prime $p$. By applying it to the coefficients of $\mathfrak{M}^{\,\mathrm{uni}}_{(M,m)}(x,y)$ one recovers the arithmetic Tutte polynomial in $\mathbb{Z}[x,y]$ defined in \cite{M1} and \cite{Md1}:
		$$\mathfrak{M}_{(M,m)}(x,y)=\sum_{A\subseteq E(M)} m(A)\,(x-1)^{\rk(M)-\rk(A)}(y-1)^{|A|-\rk(A)} \ .$$
		
		\subsubsection{The $p$-local arithmetic Tutte polynomial and the Tutte quasi-polynomial}
		Let us write $m=\prod_{p\textnormal{ prime}} m_p$ for the decomposition of an integer $m\in\mathbb{Z}_{>0}$ into its $p$-local parts. For a fixed prime $p$, the assignment $[m]\mapsto m_p$ induces a morphism of rings $\mathbb{Z}[\mathbb{Z}_{>0}]\rightarrow\mathbb{Z}$, which sends $a_p$ to $p$ and $a_q$ to $1$ for $q\neq p$. By applying it to the coefficients of $\mathfrak{M}^{\,\mathrm{uni}}_{(M,m)}(x,y)$ one gets a \emph{$p$-local arithmetic Tutte polynomial}
		$$\mathfrak{M}^{(p)}_{(M,m)}(x,y)=\sum_{A\subseteq E(M)} m_p(A)\, (x-1)^{\rk(M)-\rk(A)}(y-1)^{|A|-\rk(A)}\ .$$

		A \emph{Tutte quasi-polynomial} $Q(x,y)$ associated to a list $\alpha$ of elements in a finitely generated $\mathbb Z$-module was introduced in~\cite{BM} (our notation is that of \cite{FM})
		with connections to toric arrangements, Dahmen--Micchelli spaces, and zonotopes, among other topics.
		It is quasi-polynomial in the quantity $q = (x-1)(y-1)$, that is, 
		there exist an integer $m_\alpha$ and for each $i\in\mathbb Z/m_\alpha\mathbb Z$ a \emph{constituent} polynomial $Q_i(x,y)$ 
		such that for any $x,y\in\mathbb Z$ we have
		\[Q(x,y) = Q_{[q]}(x,y)\ .\]
		If $(M,m)$ is the arithmetic matroid of the same list $\alpha$, 
		then $m_\alpha$ may be chosen to be $\operatorname{lcm}\{m(A) : A\subseteq E(M)\}$,
		and \cite[Theorem~6.4]{BM} asserts that
		$Q_{[0]}(x,y) = \mathfrak{M}_{(M,m)}(x,y)$, while
		$Q_i(x,y) = \mathfrak{T}_M(x,y)$ if $i$~is a unit in~$\mathbb Z/m_\alpha\mathbb Z$.
		
		A generalization of these facts follows from \cite[Section~7.1]{FM}.
		Let $m_\alpha = (m_\alpha)_p\cdot r$.
		Then if $i\in\mathbb Z/m_\alpha\mathbb Z$ maps under the Chinese remainder isomorphism
		\[\mathbb Z/m_\alpha\mathbb Z\simeq\mathbb Z/(m_\alpha)_p\mathbb Z\times\mathbb Z/r\mathbb Z\]
		to $(0,i')$ where $i'$ is a unit, we have that $Q_i(x,y) = \mathfrak{M}^{(p)}_{(M,m)}(x,y)$.
		More generally, one can use the specialization of $\mathfrak{M}^{\,\mathrm{uni}}_{(M,m)}(x,y)$
		arising from the product of several of the morphisms $[m]\mapsto m_p$
		to compute any of the constituents $Q_i(x,y)$ where $i$ is either zero or invertible in each of the indecomposible direct factors of $\mathbb Z/m_\alpha\mathbb Z$.
		
		The constituents $Q_i(x,y)$ where $i$ is not of the above form are not invariants of an arithmetic matroid at all.
		To capture them requires a richer combinatorial object, such as the matroids over~$\mathbb Z$ of \cite{FM}.
		The requisite data also appear in the \emph{$G$-Tutte polynomial} of \cite{Yoshinaga}, and
		Theorem~5.5 of that work explains how to recover from the $G$-Tutte polynomial
		every constituent of the \emph{characteristic quasi-polynomial}, which is up to a sign the evaluation $Q(1-t,0)$ of the Tutte quasi-polynomial.

	\begin{samepage}
	\subsection{The convolution formula}
	
		\subsubsection{Product of arithmetic matroids}			
			Consider a fixed matroid $M$, two possibly different functions $m_1, m_2 : 2^{E(M)} \to \mathbb Z$ and their pointwise product $m_1m_2$ defined for $A\subseteq E(M)$ by $(m_1m_2)(A)\doteq m_1(A)\, m_2(A)$. The following theorem was proved in \cite{delucchimoci}.
	\end{samepage}

			\begin{thm}\label{Thm_prod}
			If both $(M,m_1)$ and $(M,m_2)$ are arithmetic matroids, then $(M,m_1m_2)$ is also an arithmetic matroid.
			\end{thm}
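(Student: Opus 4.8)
The plan is to verify axioms (A1), (A2), (P) for the pair $(M,m_1m_2)$ in turn. The first two are formal. Axiom (A1) only asserts divisibility relations between values $m(A)$ and $m(A\cup e)$, and divisibility is preserved under pointwise products: if $m_1(A)\mid m_1(A\cup e)$ and $m_2(A)\mid m_2(A\cup e)$ then $(m_1m_2)(A)\mid(m_1m_2)(A\cup e)$, and likewise in the case $\rk(A\cup e)=\rk(A)$. For (A2), note that the molecules of $(E,\rk)$ depend only on the matroid $M$, not on the multiplicity function, so for a molecule $(R,F,T)$ one just multiplies the identity $m_1(R)\,m_1(R\cup F\cup T)=m_1(R\cup F)\,m_1(R\cup T)$ by its counterpart for $m_2$. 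The substance of the theorem is axiom (P).

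For (P), I would fix a molecule $\alpha=(R,F,T)$ of $M$ and, for a multiplicity function $m$, write $\rho_m(\alpha)$ for the left-hand side of the inequality in (P). First, rewrite $\rho_m(\alpha)$ in a sign-clean form: substituting $A=R\sqcup P\sqcup Q$ with $P\subseteq F$, $Q\subseteq T$ and then replacing $Q$ by $Q'=T\setminus Q$ gives
\[
\rho_m(\alpha)=\sum_{P\subseteq F}\sum_{Q'\subseteq T}(-1)^{|F\setminus P|}\,(-1)^{|T\setminus Q'|}\;m\bigl(R\sqcup P\sqcup(T\setminus Q')\bigr).
\]
Introduce the auxiliary function $\widetilde m\colon 2^{F\cup T}\to\mathbb{Z}_{>0}$, $\widetilde m(C)=m\bigl(R\sqcup(C\cap F)\sqcup(T\setminus(C\cap T))\bigr)$, and for $S\subseteq F\cup T$, $A\subseteq(F\cup T)\setminus S$ the forward difference $(\Delta^S f)(A)=\sum_{B\subseteq S}(-1)^{|S\setminus B|}f(A\cup B)$. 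The display above is exactly $\rho_m(\alpha)=(\Delta^{F\cup T}\widetilde m)(\varnothing)$. Running the same substitution on sub-molecules, one checks that for every $S\subseteq F\cup T$ and every $A\subseteq(F\cup T)\setminus S$ the quantity $(\Delta^S\widetilde m)(A)$ equals $\rho_m$ of the sub-molecule $\bigl(R\sqcup(A\cap F)\sqcup((T\setminus S)\setminus(A\cap T)),\,S\cap F,\,S\cap T\bigr)$ of $\alpha$, which is again a molecule of $M$ by the closure property of molecules recalled after the definition of molecule. Thus (P) for $m$ at all molecules translates into: every auxiliary function $\widetilde m$ has all of its forward differences $\geq 0$.

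With this reformulation, the theorem reduces to closure of such functions under products. Since $\widetilde m$ depends on $m$ only pointwise, $\widetilde{m_1m_2}=\widetilde{m_1}\,\widetilde{m_2}$. Now I would invoke the discrete Leibniz rule for forward differences, which starts from $\Delta^{e}(\phi\psi)(A)=\phi(A\cup e)\,\Delta^{e}\psi(A)+\Delta^{e}\phi(A)\,\psi(A)$ and iterates (a short induction on $|S|$, using that the shift $f\mapsto f(\,\cdot\cup e)$ is multiplicative) to
\[
(\Delta^S(\phi\psi))(A)=\sum_{S=S_1\sqcup S_2}(\Delta^{S_1}\phi)(A\cup S_2)\,(\Delta^{S_2}\psi)(A).
\]
Taking $\phi=\widetilde{m_1}$, $\psi=\widetilde{m_2}$, $S=F\cup T$, $A=\varnothing$, each summand on the right is a product of a forward difference of $\widetilde{m_1}$ and one of $\widetilde{m_2}$, hence — by the previous paragraph applied to the arithmetic matroids $(M,m_1)$ and $(M,m_2)$ — a product of two nonnegative integers. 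Therefore $\rho_{m_1m_2}(\alpha)=(\Delta^{F\cup T}(\widetilde{m_1}\widetilde{m_2}))(\varnothing)\geq 0$, and as $\alpha$ was arbitrary, $(M,m_1m_2)$ satisfies (P).

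The main obstacle is the bookkeeping in the reformulation step: one has to track carefully how the global sign $(-1)^{|T|}$ together with the flip $Q\mapsto T\setminus Q$ turns the alternating sum of (P) into the clean statement ``all forward differences of $\widetilde m$ are nonnegative,'' and how base-pointed forward differences of $\widetilde m$ correspond to sub-molecules of $\alpha$. Neither point is deep, but the index manipulations are delicate and must be done with care. A secondary, routine point is recording the discrete Leibniz rule above. Everything else — axioms (A1) and (A2), and the fact that $\widetilde m$ takes values in $\mathbb{Z}_{>0}$ — is immediate.
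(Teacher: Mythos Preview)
The paper does not prove this theorem; it merely states it and attributes the proof to \cite{delucchimoci}, adding the remark that axioms (A1) and (A2) hold trivially while (P) is the substantive point. Your proposal supplies a complete and correct argument: the reformulation of (P) as nonnegativity of all forward differences of the auxiliary function $\widetilde m$ is carried out carefully and correctly (the sign bookkeeping and the identification of base-pointed differences with sub-molecules both check out against the closure property of molecules recalled in the paper), and the discrete Leibniz rule then reduces (P) for $m_1m_2$ to (P) for $m_1$ and $m_2$ at sub-molecules.

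Since the paper offers no proof of its own, there is no meaningful comparison of approaches to make beyond noting that your argument is in the spirit of the remark following the theorem statement and is, in outline, the standard route to this result.
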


			We remark that there the key point was to prove that $(M,m_1m_2)$ satisfies axiom (P), since (A1) and (A2) hold trivially.
			\begin{defi}
			We call  $(M,m_1m_2)$ the \emph{product} of $(M,m_1)$ and $(M,m_2)$.
			\end{defi}
			This operation makes the set of arithmetic matroids over a fixed underlying matroid into a commutative monoid, with unit given by the trivial multiplicity $m(A)=1$ for all  $A\subseteq E$.
			
		\subsubsection{Bi-arithmetic matroids and the convolution formula}
		
			\begin{defi}
			A \emph{bi-arithmetic matroid} is a triple $(M,m_1,m_2)$ where $M$ is a matroid and $m_1, m_2:2^{E(M)}\rightarrow\mathbb{Z}_{>0}$ are two functions such that $(M,m_1)$ and $(M,m_2)$ are arithmetic matroids.
			\end{defi}
			
			Bi-arithmetic matroids form a multiplicative minors system denoted by $\mathsf{A^2Mat}$, for which the notions of restriction, contraction and direct sum are the obvious ones. In categorical terms this can be interpreted as the fiber product
			$$\mathsf{A^2Mat} = \mathsf{AMat}\times_{\mathsf{Mat}}\mathsf{AMat}\ .$$
			The product of arithmetic matroids should then be viewed as a morphism
			$$\mathsf{AMat}\times_{\mathsf{Mat}}\mathsf{AMat}\rightarrow\mathsf{AMat}\, , \,((M,m_1), (M,m_2)) \mapsto (M,m_1m_2)\ ,$$
			which gives $\mathsf{AMat}$ the structure of a monoid in the category of minors systems over $\mathsf{Mat}$.\medskip
			
			We now prove a convolution formula for the universal arithmetic Tutte polynomial.
			
			\begin{thm}\label{thm:arith conv}
			The universal arithmetic Tutte polynomial satisfies the following analogue of Kung's formula \eqref{eq: general convolution formula tutte matroids} in the algebra
			$\KK[\mathbb{Z}_{>0}][a,b,c,d]$:
			\begin{equation*}			
			\begin{split}
			&\mathfrak{M}^{\,\mathrm{uni}}_{(M,m_1m_2)} (1-ab,1-cd)  \\ 
			&=\sum_{A\subseteq E(M)} a^{\rk(M)-\rk(A)}d^{|A|-\rk(A)}  \,\mathfrak{M}^{\,\mathrm{uni}}_{(M,m_1)|A}(1-a,1-c)\,\mathfrak{M}^{\,\mathrm{uni}}_{(M,m_2)/A}(1-b,1-d)\ .
			\end{split}
			\end{equation*}
			\end{thm}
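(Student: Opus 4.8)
The plan is to exhibit the identity as an instance of the general convolution formula, Theorem~\ref{thm:general convolution formula}, applied to the minors system $\mathsf{A^2Mat}$ of bi-arithmetic matroids, mirroring the derivation of the matroid convolution formula in Proposition~\ref{prop:convolution matroids} but carrying the two multiplicity functions along through two separate twist maps.

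Work over $R=\KK[\mathbb{Z}_{>0}][a,b,c,d]$. On $\mathsf{A^2Mat}$ introduce two twist maps $\tau_1,\tau_2:\mathsf{A^2Mat}[\varnothing]\to R^{\times}$, which on $\mathsf{A^2Mat}[\varnothing]\simeq(\mathbb{Z}_{>0})^2$ are given by $\tau_i(\varnothing,m_1,m_2)=[m_i]$; these are monoid morphisms because $[\,\cdot\,]\colon(\mathbb{Z}_{>0},\times)\to\KK[\mathbb{Z}_{>0}]$ is, and their pointwise product satisfies $(\tau_1\tau_2)(\varnothing,m_1,m_2)=[m_1][m_2]=[m_1m_2]$. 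Introduce also three norms $P_0,P_1,P_2:\KK\mathsf{A^2Mat}\to R$ pulled back, along the underlying-matroid morphism $\mathsf{A^2Mat}\to\mathsf{Mat}$, from the matroid norms occurring in the proof of Proposition~\ref{prop:convolution matroids}, so that $\overline{P_0}(M,m_1,m_2)=(-cd)^{\cork(M)}$, $P_1(M,m_1,m_2)=(-a)^{\rk(M)}d^{\cork(M)}$, and $P_2(M,m_1,m_2)=(-ab)^{\rk(M)}$; these are norms by functoriality of norms (Section~\ref{sec:functoriality duality}). Note that $(M,m_1m_2)$ is a genuine arithmetic matroid by Theorem~\ref{Thm_prod}, so the left-hand side of the asserted formula is defined; equivalently the product morphism $\mathsf{A^2Mat}\to\mathsf{AMat}$ is available.

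The heart of the argument is to match the three Tutte characters in $T_{\overline{P_0},\tau_1\tau_2,P_2}=T_{\overline{P_0},\tau_1,P_1}*T_{\overline{P_1},\tau_2,P_2}$ with universal arithmetic Tutte polynomials. For $X=(M,m_1,m_2)\in\mathsf{A^2Mat}[E]$ and $A\subseteq E$ one has $X|A/A=(\varnothing,m_1(A),m_2(A))$, so $\tau_i$ reads $[m_i(A)]$ off this term; since $\mathfrak{M}^{\,\mathrm{uni}}$ is the Tutte character on $\mathsf{AMat}$ with twist $[\,\cdot\,]$ and norms that are functions of $\rk$ and $\cork$, the matroid-level bookkeeping behind \eqref{eq:TMat} carries over word for word with the extra factor $[m(A)]$ inserted into each summand. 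This gives $T_{\overline{P_0},\tau_1\tau_2,P_2}(X)=\mathfrak{M}^{\,\mathrm{uni}}_{(M,m_1m_2)}(1-ab,1-cd)$. Expanding the convolution on the right over $B\subseteq A$ in $X|A=(M|A,m_1|_A,m_2|_A)$ and over $C\subseteq E\setminus A$ in $X/A=(M/A,\overline{m_1},\overline{m_2})$, and using that $m_1|_A(B)=m_1(B)$ for $B\subseteq A$ while $\overline{m_2}(C)=m_2(C\cup A)$, one obtains $T_{\overline{P_0},\tau_1,P_1}(X|A)=d^{\,|A|-\rk(A)}\,\mathfrak{M}^{\,\mathrm{uni}}_{(M,m_1)|A}(1-a,1-c)$ and $T_{\overline{P_1},\tau_2,P_2}(X/A)=a^{\,\rk(M)-\rk(A)}\,\mathfrak{M}^{\,\mathrm{uni}}_{(M,m_2)/A}(1-b,1-d)$. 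Substituting into Theorem~\ref{thm:general convolution formula} and reordering the two monomial prefactors into $a^{\rk(M)-\rk(A)}d^{|A|-\rk(A)}$ yields the claim.

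The main obstacle will be this last matching: one must track carefully how restriction and contraction of a bi-arithmetic matroid act on each multiplicity function — restriction to $A$ leaves the values $m_i(B)$ unchanged for $B\subseteq A$, while contraction by $A$ replaces $m_i$ by $B\mapsto m_i(B\cup A)$ — and check that in the doubly-indexed sum with $B\subseteq A\subseteq C\cup A$ the twist $\tau_1$ contributes only values of $m_1$ on subsets of $A$ while $\tau_2$ contributes only values of $m_2$ on oversets of $A$, exactly as the right-hand side demands. The norm-side identities are the same as for matroids, so no new computation is needed there.
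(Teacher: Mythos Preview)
Your proposal is correct and follows essentially the same route as the paper's own proof: both work in the minors system $\mathsf{A^2Mat}$, choose the same three norms (pulled back from the matroid norms of Proposition~\ref{prop:convolution matroids}) and the two twist maps $\tau_i(\varnothing,m_1,m_2)=[m_i]$, and then read off the desired identity from Theorem~\ref{thm:general convolution formula} after identifying each of the three Tutte characters with the appropriate prefactor times a universal arithmetic Tutte polynomial. Your write-up is slightly more explicit than the paper's about how restriction and contraction act on the two multiplicity functions, but this is elaboration rather than a different argument.
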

			
			\begin{proof}
			The proof is the same as in the case of matroids (Proposition \ref{prop:convolution matroids}) by taking care of the twist maps. For the sake of completeness, we show how it follows from the general convolution formula given in Theorem \ref{thm:general convolution formula}. Let's prove the latter formula first. We consider the norms $N_0,N_1,N_2:\KK\mathsf{A^2Mat}\rightarrow \KK[\mathbb{Z}_{>0}][a,b,c,d]$ which map a bi-arithmetic matroid $(M,m_1,m_2)$ to
			$$(-cd)^{\cork(M)} \;\; ,\;\; (-a)^{\rk(M)}d^{\cork(M)}  \;\;,\;\;(-ab)^{\rk(M)}\ ,$$
			respectively.
			We consider the twist maps $\tau_1,\tau_2:\KK\mathsf{A^2Mat}\rightarrow\KK[\mathbb{Z}_{>0}][a,b,c,d]$ defined by 
			$$\tau_1(\varnothing,m_1,m_2)=[m_1]\;\; , \;\; \tau_2(\varnothing,m_1,m_2)=[m_2]\ .$$
			We have the Tutte characters
			$$T_{N_0,\tau_1\tau_2,N_2}(M,m_1,m_2)=\mathfrak{M}^{\,\mathrm{uni}}_{(M,m_1m_2)}(1-ab,1-cd)\ ;$$
			$$T_{N_0,\tau_1,N_1}(M,m_1,m_2)=d^{\cork(M)}\mathfrak{M}^{\,\mathrm{uni}}_{(M,m_1)}(1-a,1-c)\ ;$$
			$$T_{\overline{N_1},\tau_2,N_2}(M,m_1,m_2)=a^{\rk(M)}\mathfrak{M}^{\,\mathrm{uni}}_{(M,m_2)}(1-b,1-d)\ .$$
			The claim is thus a consequence of Theorem \ref{thm:general convolution formula}. 
			\end{proof}
			
			As a specialization we obtain the following convolution formula in the spirit of the formula of Kook--Reiner--Stanton and Etienne--Las Vergnas.
			
			\begin{coro}
			The universal arithmetic Tutte polynomial satisfies the following convolution formula in the algebra $\KK[\mathbb{Z}_{>0}][x,y]$:
			\begin{equation}\label{aritconv}
			  \mathfrak{M}^{\,\mathrm{uni}}_{(M,m_1m_2)} (x,y) 
			=\sum_{A\subseteq E(M)} \mathfrak{M}^{\,\mathrm{uni}}_{(M,m_1)|A}(0,y)\,\mathfrak{M}^{\,\mathrm{uni}}_{(M,m_2)/A}(x,0). 
			\end{equation}
			\end{coro}
			
			\begin{proof}
			This follows from Theorem \ref{thm:arith conv} by specializing $(a,b,c,d)$ to $(1,1-y,1-x,1).$
			\end{proof}
			
			All the specializations and variants of the convolution formula that we discussed in the case of matroids are also available here. We only mention one that was proved by Backman and Lenz \cite{backmanlenz}. For an arithmetic matroid $(M,m)$ we have the following convolution formula for the arithmetic Tutte polynomial in the ring $\mathbb{Z}[x,y]$:
			\begin{equation*}
			\begin{split}
			\mathfrak{M}_{(M,m)}(x,y) &= \sum_{A\subseteq E(M)} \mathfrak{M}_{(M,m)|A}(0,y) \, \mathfrak{T}_{M/A}(x,0)\\
			& = \sum_{A\subseteq E(M)} \mathfrak{T}_{M|A}(0,y)\,\mathfrak{M}_{(M,m)/A}(x,0)\ .
			\end{split}
			\end{equation*}
			This follows from formula (\ref{aritconv}) in the case $(m_1,m_2)=(m,1)$ or $(1,m)$ by the projection ${\mathbb{Z}}_{>0}\rightarrow 1$.

\longthanks{
The genesis of this project took place at the 2016 Borel Seminar
in conversations between the second and third authors and
Spencer Backman and Matthias Lenz, whom we thank,
together with the seminar organisers for bringing us together. The first author thanks Camille Combe and Emeric Gioan for many interesting conversations on the themes of this paper.
Joseph Kung and Iain Moffatt had many valuable comments on a draft of this paper.
We also thank the anonymous referees for their suggestions.}

\nocite{*}
\bibliographystyle{amsplain-ac}
\bibliography{biblio1}
\end{document}